\def\newrefformat#1#2{%
  \@namedef{pr@#1}##1{#2}}
\def\prettyref#1{\@prettyref#1:}
\def\@prettyref#1:#2:{%
  \expandafter\ifx\csname pr@#1\endcsname\relax%
    \PackageWarning{prettyref}{Reference format #1\space undefined}%
    \ref{#1:#2}%
  \else%
    \csname pr@#1\endcsname{#1:#2}%
  \fi%
}
\def\indsym#1#2{%
  \setbox0=\hbox{$\m@th#1x$}%
  \kern\wd0%
  \hbox to 0pt{\hss$\m@th#1\mid$\hbox to 0pt{$\m@th#1^{#2}$}\hss}%
  \lower.9\ht0\hbox to 0pt{\hss$\m@th#1\smile$\hss}%
  \kern\wd0} 
\def\nindsym#1#2{%
  \setbox0=\hbox{$\m@th#1x$}%
  \kern\wd0%
  \hbox to 0pt{\mathchardef\nn="3236\hss$\m@th#1\nn$\kern1.4\wd0\hss}
  \hbox to 0pt{\hss$\m@th#1\mid$\hbox to 0pt{$\m@th#1^{#2}$}\hss}%
  \lower.9\ht0\hbox to 0pt{\hss$\m@th#1\smile$\hss}%
  \kern\wd0}
\theoremstyle{plain}
\newtheorem{thm}{Theorem}[section]
\numberwithin{equation}{section} 
\numberwithin{figure}{section} 
\theoremstyle{plain}
\newtheorem{cor}[thm]{Corollary} 
\theoremstyle{definition}
\newtheorem{defn}[thm]{Definition}
\newtheorem{remark}[thm]{Remark}
\theoremstyle{plain}
\newtheorem{lem}[thm]{Lemma} 
\theoremstyle{plain}
\newtheorem{prop}[thm]{Proposition} 
\theoremstyle{plain}
\newtheorem{fact}[thm]{Fact}
\theoremstyle{plain}
\newtheorem{notation}[thm]{Notation}
\theoremstyle{plain}
\newtheorem{ej}[thm]{Example}
\newtheorem{preg}[thm]{Question}
\newtheorem{obs}[thm]{Observation}
\newtheorem{teo}[thm]{Theorem}
\theoremstyle{plain}
\newtheorem{afirm}[thm]{\textit{Claim}}
\def\upharpoonrightleft{\! \!\upharpoonright}
\def\A{{\mathbb A}}
\def\B{{\mathbb B}}
\def\N{{\mathbb N}}
\def\Z{{\mathbb Z}}
\def\Q{{\mathbb Q}}
\def\R{{\mathbb R}}
\def\modA{{\mathcal A}}
\def\modB{{\mathcal B}}
\def\modH{{\mathcal H}}
\def\modK{{\mathcal K}}
\def\modL{{\mathcal L}}
\def\modM{{\mathcal M}}
\def\modN{{\mathcal N}}
\def\modU{{\mathcal U}}
\def\sbestrucel{{\preccurlyeq}}
\def\forkindep{\mathrel{\raise0.2ex\hbox{\ooalign{\hidewidth$\vert$\hidewidth\cr\raise-0.9ex\hbox{$\smile$}}}}}
\def\notind#1#2{#1\setbox0=\hbox{$#1x$}\kern\wd0
\hbox to 0pt{\mathchardef\nn=12854\hss$#1\nn$\kern1.4\wd0\hss}
\hbox to 0pt{\hss$#1\mid$\hss}\lower.9\ht0 \hbox to 0pt{\hss$#1\smile$\hss}\kern\wd0}
\title{The SB-property on metric structures.}
\author{Camilo Argoty}
\address{Universidad Militar Nueva Granada, Cra 11 No 101-80, Bogot\'{a}, Colombia}
\author{Alexander Berenstein}
\address{Universidad de los Andes, Cra 1 No 18A-12, Bogot\'{a}, Colombia}
\author{Nicol\'as Cuervo Ovalle}
\address{Universidad de los Andes, Cra 1 No 18A-12, Bogot\'{a}, Colombia}
\date{}
\thanks{\hspace{-0.4cm}2020 Mathematics Subject Classification. 03C45, 03C66.
\\ 
Key words and phrases. Schröder-Bernstein property, continuous logic, randomizations, probability algebras, Hilbert spaces, classification theory, perturbations. 
\\ The authors would like to thank the referees for valuable feedback. The authors would also like to thank the support of the project CIAS-3136 Universidad Militar Nueva Granada.}
\begin{document}

\vspace{-0.7cm}\begin{abstract}
    A complete theory $T$ has the \textit{Schröder-Bernstein property} or simply the \textit{SB-property} if any pair of elementarily bi-embeddable models are isomorphic. This property has been studied in the discrete first-order setting and can be seen as a first step towards classification theory. This paper deals with the SB-property on continuous theories.  Examples of complete continuous theories that have this property include Hilbert spaces and any completion of the theory of probability algebras. We also study a weaker notion, the SB-property up to perturbations. This property holds if any two elementarily bi-embeddable models are isomorphic up to perturbations. We prove that the theory of Hilbert spaces expanded with a bounded self-adjoint operator has the SB-property up to perturbations of the operator and that the theory of atomless probability algebras with a generic automorphism have the SB-property up to perturbations of the automorphism. We also study how the SB-property behaves with respect to randomizations. Finally we prove, in the continuous setting, that if $T$ is a strictly stable theory then $T$ does not have the SB-property.
\end{abstract}

\maketitle

\section{Introduction}

\begin{defn}
We say that a complete theory $T$ has the \textit{Schröder-Bernstein property}, or simply, the \textit{SB-property}, if for any two models $\mathcal{M},\mathcal{N}\models T$ that are elementarily bi-embeddable, i.e, there exists elementary embeddings $\varphi \colon \mathcal{M}\to\mathcal{N}$ and $\psi \colon \mathcal{N}\to\mathcal{M}$, we have that $\mathcal{M}\cong\mathcal{N}$.
\end{defn}
One motivation for studying the SB-property is that if $T$ has this property, then $T$ would be a theory for which we have a ``good understanding'' of its models, in terms that they are classified by some reasonable collection of invariants. For example, by Morley's theorem, see \cite{morley1967countable}, if $T$ is countable and $\aleph_1$-categorical then the models
of $T$ are classified by a single invariant cardinal number that is preserved by elementary embeddings, so $T$ would have SB-property; however SB-property is a weaker condition than $\aleph_1$-categoricity, for example the theory of an infinite set with a predicate which is infinite and coinfinite has the SB-property and it is not $\aleph_1$-categorical. The SB-property for first order theories have been well studied in \cite{goodrick2007does},\cite{goodrick2014schroder} and \cite{nurmagambetov1989characterization}. Among the main results we can find:
\begin{fact}[Theorem 1 in \cite{nurmagambetov1989characterization}]
If $T$ is $\omega$-stable, then T has the SB property if and only if $T$ is non-multidimensional.
\end{fact}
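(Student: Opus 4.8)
The plan is to treat the two implications separately, in each case reducing the SB-property to the behaviour of \emph{dimensions} of regular types under elementary embeddings, and to use rather different tools for the two directions.

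For the implication ``$T$ non-multidimensional $\Rightarrow$ $T$ has the SB-property'' I would invoke Shelah's structure theory. An $\omega$-stable non-multidimensional theory has NDOP and is shallow: a witness to DOP produces a regular type orthogonal to a model, hence to $\operatorname{acl}^{eq}(\emptyset)$, and similarly any witness to positive depth, so either would contradict non-multidimensionality. Thus $T$ is classifiable, and since \emph{every} regular type is non-orthogonal to a type over $\operatorname{acl}^{eq}(\emptyset)$, the decomposition of a model over a tree of submodels collapses to height one: every $\mathcal{M}\models T$ is prime and minimal over $\operatorname{acl}^{eq}(\emptyset)$ together with an independent union of Morley sequences $I_i$, one in each member of a fixed maximal family $\{p_i:i\in I\}$ of pairwise orthogonal regular types over $\operatorname{acl}^{eq}(\emptyset)$ (a countable family, by $\omega$-stability). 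Hence $\mathcal{M}$ is determined up to isomorphism by the dimension function $i\mapsto\dim(p_i,\mathcal{M})$, up to the action of $\operatorname{Aut}(\operatorname{acl}^{eq}(\emptyset))$. The remaining point is immediate monotonicity: an elementary embedding $\varphi\colon\mathcal{M}\to\mathcal{N}$ carries a Morley sequence in $p_i$ inside $\mathcal{M}$ to an independent set of realizations of a conjugate of $p_i$ inside $\mathcal{N}$, so $\dim(p_i,\mathcal{M})\le\dim(p_i,\mathcal{N})$ for all $i$ (after matching conjugates); applying this to both maps of a bi-embeddable pair forces all dimensions to agree, hence $\mathcal{M}\cong\mathcal{N}$.

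For the converse ``$T$ multidimensional $\Rightarrow$ $T$ fails the SB-property'' I would construct two bi-embeddable, non-isomorphic models with an asymmetric dimension pattern. Multidimensionality together with $\omega$-stability yields a regular type $p$ over a finite tuple $a$ that is orthogonal to $\operatorname{acl}^{eq}(\emptyset)$; set $r=\operatorname{tp}(a/\emptyset)$ and choose an independent sequence $(a_n)_{n<\omega}$ of realizations of $r$, together with the conjugates $p_n$ over $a_n$ of $p$, arranged so that the $p_n$ are pairwise orthogonal (each still orthogonal to $\operatorname{acl}^{eq}(\emptyset)$). Let $\mathcal{M}$ be prime and minimal over a Morley sequence in $r$ of length $\aleph_0$ together with Morley sequences $I_n$ in $p_n$ of lengths $\aleph_0,\aleph_1,\aleph_1,\aleph_1,\ldots$, and let $\mathcal{N}$ be built the same way but with lengths $\aleph_1,\aleph_1,\aleph_1,\ldots$. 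Because all the $a_n$ realize the same type over $\emptyset$, an elementary map may permute the ``slots'', and along an elementary embedding a dimension need only not decrease; this produces elementary embeddings $\mathcal{M}\to\mathcal{N}$ and $\mathcal{N}\to\mathcal{M}$. An isomorphism, on the other hand, must preserve the multiset $\{\dim(p_n,\cdot):n<\omega\}$, since the $p_n$-parts are recoverable from the unique prime decomposition, and $\{\aleph_0,\aleph_1,\aleph_1,\ldots\}\neq\{\aleph_1,\aleph_1,\ldots\}$; hence $\mathcal{M}\not\cong\mathcal{N}$.

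I expect the main obstacle to be this last construction, and specifically two bookkeeping points. First, one must arrange the conjugates $p_n$ of $p$ over an independent sequence of realizations of $r$ to be pairwise orthogonal; this relies on $p$ being orthogonal to $\operatorname{acl}^{eq}(\emptyset)$, but in general may require a coding argument or a passage to a suitable imaginary. Second, one must control $\mathcal{M}$ and $\mathcal{N}$ tightly enough that the splitting into the $r$-part and the $p_n$-parts is unique, so that the dimension multiset is genuinely an isomorphism invariant. Both are routine in the $\omega$-stable setting, but they are where the real work lies; the non-multidimensional direction, by contrast, is essentially a corollary of the classification theorem together with the trivial monotonicity of dimension along elementary embeddings.
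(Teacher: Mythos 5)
You should first note that the paper does not prove this statement at all: it is quoted as a Fact with a citation to Nurmagambetov, so your proposal can only be judged against the standard proof in the literature. Your overall plan is indeed that standard route: for the non-multidimensional direction, classify models by the dimensions of a maximal family of pairwise orthogonal regular types and use monotonicity of dimension along elementary embeddings; for the converse, build bi-embeddable, non-isomorphic models by a ``shift'' of dimension patterns over an independent sequence of conjugate regular types orthogonal to $\emptyset$. In the first direction the real work is exactly where you say ``after matching conjugates'': an elementary embedding only fixes $\emptyset$, so it acts on $\operatorname{acl}^{eq}(\emptyset)$ and permutes the non-orthogonality classes; to extract equality of dimensions from the two systems of inequalities you need that this induced permutation has finite orbits (which follows because canonical bases are finite tuples with finitely many conjugates), or an equivalent rebasing argument over copies of the prime model. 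You also need to justify that the representatives $p_i$ can be taken over $\operatorname{acl}^{eq}(\emptyset)$ for an $\omega$-stable non-multidimensional theory, and to handle finite dimensions, where non-orthogonal regular types need not have equal dimension. These are fixable with standard facts, but as written they are asserted, not proved.

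The genuine gap is in the multidimensional direction. With your patterns $(\aleph_0,\aleph_1,\aleph_1,\dots)$ for $\mathcal{M}$ and $(\aleph_1,\aleph_1,\dots)$ for $\mathcal{N}$, non-isomorphism is not established: both models have the same multiset of uncountable dimensions (countably many slots of dimension $\aleph_1$), so the only candidate distinguishing feature is that $\mathcal{M}$ has a conjugate of $p$ of dimension exactly $\aleph_0$ orthogonal to the $\aleph_1$-slots. But $\mathcal{N}$ is prime over its skeleton, and nothing you said rules out that $\mathcal{N}$ contains a realization $a'$ of $r$, isolated over the skeleton, with $p_{a'}$ orthogonal to all the designated $p_{a_n}$ and of dimension $\aleph_0$ in $\mathcal{N}$; in that case your invariant does not separate the models. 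The appeal to ``the unique prime decomposition'' is not available: prime decompositions are not unique, and the dimensions of conjugates of $p$ over unplanned realizations of $r$ must be controlled by a lemma, not by fiat. The standard repair is to take strictly increasing large dimensions $\lambda_1<\lambda_2<\cdots$ (say all above $2^{\aleph_0}$) for $\mathcal{M}$ and the shifted sequence for $\mathcal{N}$, and to prove that in a prime (constructible) model over a set $X$ every regular type orthogonal to $X$ has dimension at most $\aleph_0$; then the least large dimension attained by a conjugate of $p$ is an isomorphism invariant that differs between the two models, while primeness over the skeletons still yields the two embeddings. Your other flagged point, that conjugates of $p$ over an independent sequence of realizations of $r$ are pairwise orthogonal because $p\perp\emptyset$, is the correct standard lemma and is fine.
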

and  
\begin{fact}[see \cite{goodrick2007does}]\label{superstable-SB}
If $T$ is not superstable, then T does not have SB-property. Furthermore, if $T$ is unstable, then for any cardinal $\kappa$, there is an infinite collection of $\kappa$-saturated models of $T$ which are pairwise elementarily bi-embeddable but pairwise nonisomorphic.
\end{fact}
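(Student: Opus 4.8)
The plan is to split according to the stability of $T$: either $T$ is unstable, or $T$ is stable but not superstable. The unstable case will be handled by a construction strong enough to also yield the ``furthermore'' clause, and the first assertion then follows from the two cases---in the unstable case already from any two members of the family produced.

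\emph{Case 1: $T$ unstable.} Fix a formula $\varphi(\bar x,\bar y)$ with the order property. For each linear order $I$, a Ramsey-and-compactness argument produces a model of $T$ carrying a $\varphi$-ordered $I$-indexed indiscernible sequence, and adding Skolem functions yields the associated Ehrenfeucht--Mostowski model $M_I$. The crucial formal point is functoriality: an order-embedding $I\hookrightarrow J$ induces an elementary embedding $M_I\hookrightarrow M_J$, so bi-embeddable linear orders produce elementarily bi-embeddable models. It therefore suffices, given an infinite $\kappa$, to find countably many linear orders $(I_n)_{n<\omega}$, all of a single cardinality $\ge\kappa$, that are pairwise bi-embeddable and such that (i) the $M_{I_n}$ are pairwise non-isomorphic and (ii) each $M_{I_n}$ is $\kappa$-saturated. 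For (ii) one wants the $I_n$ highly homogeneous; for (i) one wants them to carry pairwise non-isomorphic (but pairwise mutually embeddable) ``decorations'' that survive the EM construction---survival being available because the skeleton order $<$ is $\varphi$-definable. Reconciling (i) and (ii)---equivalently, arranging that $I\mapsto M_I$ is faithful enough on the chosen class to transport a non-isomorphism, while keeping the outputs saturated---is where I expect the real work to sit; it is the standard non-structure bookkeeping, possibly carried out by passing to suitable saturated extensions that still remember the decoration.

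\emph{Case 2: $T$ stable but not superstable.} Here I would use the forking-chain characterisation of non-superstability to produce formulas $\varphi_n(\bar x,\bar y_n)$ and a tree of parameters $(\bar a_\eta)_{\eta\in{}^{<\omega}\omega}$ with: every branch $\nu\in{}^{\omega}\omega$ giving a consistent type $p_\nu=\{\varphi_n(\bar x,\bar a_{\nu\restriction n}):n<\omega\}$ that forks at each level over the preceding part of the branch, and (after shrinking to a subtree) realisations of distinct branch-types behaving incompatibly. One then encodes into a model the downward-closed subtree of branch-types it realises, together with the dimensions of those types. Picking two such subtrees of the same cardinality that are mutually embeddable yet non-isomorphic---which is possible precisely because non-superstability forces infinite depth, while superstability would rigidify the tree---and taking the corresponding primary models gives a bi-embeddable, non-isomorphic pair, which settles the first assertion in this case.

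The step I expect to be the main obstacle, in both cases, is the tension characteristic of non-structure arguments: the coding must be rigid enough that no isomorphism can equalise the separating invariant (a dimension along a branch, or a tree shape, that simply cannot be ``filled in''), and in Case~1 it must at the same time be homogeneous enough for the models to be $\kappa$-saturated. Non-superstability is exactly what supplies the unbounded ``room'' needed to meet both demands simultaneously, which is why the dividing line in the statement is superstability and not something stronger like $\omega$-stability.
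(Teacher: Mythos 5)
There is nothing in the paper to compare your argument against: the statement is imported as a Fact, cited to the Goodrick reference, and the paper offers no proof of it. So your proposal has to be judged on its own terms, and on those terms it is an outline of the standard Shelah-style non-structure strategy rather than a proof. The decisive steps in both of your cases are exactly the ones you defer. In Case 1, the functoriality of Ehrenfeucht--Mostowski hulls is fine, but EM hulls of linear orders are essentially never $\kappa$-saturated, so the ``furthermore'' clause cannot come from choosing homogeneous index orders; one must pass to $\kappa$-saturated elementary extensions and then exhibit an invariant of the $\varphi$-definable ordering (in the classical arguments, something like the number of cuts realized with prescribed cofinalities on both sides, with the cofinalities chosen $\geq\kappa$ so that $\kappa$-saturation cannot fill them) which is (i) preserved when passing to the saturated extension, (ii) transported monotonically by elementary embeddings, and (iii) realizable in infinitely many pairwise bi-embeddable but pairwise distinct patterns. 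You gesture at ``decorations that survive'' and ``saturated extensions that still remember the decoration,'' but none of this is constructed, and it is precisely where the theorem lives.

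In Case 2 the situation is the same: the tree of forking formulas witnessing non-superstability is standard, but the assertions that realizations of distinct branch types ``behave incompatibly,'' that the realized subtree together with branch dimensions is an isomorphism invariant of the primary model (primary over what set, in which isolation notion $F^{a}_{\kappa}$, and why the dimension cannot be equalized by an isomorphism), and that one can choose two mutually embeddable, non-isomorphic such configurations, are all stated without argument. Since your own text flags these points as ``where I expect the real work to sit,'' what you have is a correct identification of the known proof strategy and of its obstacles, not a proof; to make it one you would either have to carry out the invariant construction and preservation arguments in both cases, or simply cite the source, as the paper does.
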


These results show that there is an interesting overlap between classification theory and SB-property. The purpose of this paper is to study the \textit{Schröder-Bernstein property} in the continuous context for three families of theories: expansions of Hilbert spaces, expansions of probability algebras and randomizations. This paper is divided as follows. 
\\
\\
First, in section $2$ we will focus on the theory of infinite dimensional Hilbert spaces, and its expansions adding a bounded self-adjoint operator. In this section we will prove that infinite dimensional Hilbert spaces and also the theory obtained by adding a projection operator as a unary function, known as \textit{beautiful pairs of Hilbert spaces}, satisfies the SB-property. We also study the weaker notion of being \textit{isomorphic up to perturbations}. We prove that the theory obtained by adding as a function a bounded self-adjoint operator satisfies the \textit{SB-property up to perturbation}. 
\\
\\
In section 3 we will concentrate on the theory of probability algebras. We will prove using Maharam's Theorem (see \cite{maharam1942homogeneous}), that the theory of atomless probability algebras satisfies the SB-property. We also extend the arguments to prove that any completion of the theory of probability algebras also satisfies the SB-property. We then study the expansion of the theory by a generic automorphism, known as $APrA$. We show the theory $APrA$ does not have the SB-property but that a weaker version holds: it has the SB-property up to perturbations of the automorphism. This last theory is stable, not superstable, and $\aleph_0$-stable up to perturbations (see \cite{BenYac-Berens-2008perturbations}).\\
\\
In section 4 we study Randomizations. Informally, a randomization of a first order (discrete) model, $\modM$, is a two sorted metric structure which consist of a sort of events and a sort of functions with values on the model, usually understood as \textit{random variables}. More generally, given a complete first order theory $T$, there is a complete continuous theory known as randomized theory, $T^R$, which is the common theory of all randomizations of models of $T$. Randomizations were introduced first by Keisler in \cite{keisler1999randomizing} and then axiomatized in the continuous setting by Ben Yaacov and Keisler in \cite{ben2009randomizations}. Since randomizations where introduced, many authors focused on examining which model theoretic properties of $T$ are preserved on $T^R$, for example, in \cite{ben2009randomizations} and \cite{yaacov2008continuous} it was shown that properties like $\omega$-categoricity, stability and dependence are preserved. Similarly in \cite{andrews2015separable}  it is proved that the existence of prime models is preserved by randomization but notions like minimal models are not preserved. Following these ideas, we prove that a first order theory $T$ with $\leq\omega$ countable models has the SB-property for countable models if and only if $T^R$ has the SB-property for separable randomizations. It remains as an open question whether the SB-property transfers to $T^R$ without these extra assumptions. 
\\
\\
In section 5 we include a proof of the first statement of Fact \ref{superstable-SB} in the continuous setting: we show that if a theory $T$ is strictly stable then $T$ does not have the SB-property. 
\\
\\
We will assume throughout the paper that the reader is familiar with the model theory
of metric structures, the background needed to understand most of the examples can be
found in \cite{ContModelTheory}. We will use some tools about perturbations in model theory, mostly basic ideas
and definitions. A reader interested in the subject can check \cite{yaacov2008perturbations}. Similarly, we will use some tools from Randomizations, while we tried to give a self-contained presentation of the notions that we need, details can be found \cite{andrews2015separable, ben2009randomizations}.

\section{Hilbert spaces and expansions with
bounded self-adjoint operators.} 

In this section we will study the SB-property in Hilbert spaces and their expansions with a bounded self-adjoint operator. We will show that both Hilbert spaces and beautiful pairs of Hilbert spaces have the SB-property. On the other hand, when we deal with expansions with a general self-adjoint operator, we could only prove an approximate version of the SB-property. We also explore connections between elementary bi-embeddability of the expansions and the Weyl-von Neumann-Berg Theorem (see Fact \ref{WeylVonNeumannBerg} and its references).

Let us recall some historical background about these expansions. Expansions of Hilbert spaces with operators were studied by Henson and Iovino. When the operator is self-adjoint, Henson proved (unpublished) that one can characterize the models of the theory of such an expansion $(\modH,A)$ in terms of the spectrum of $A$ using the Weyl-von Neumann-Berg Theorem. These ideas were extended to the setting of non-degenerate representations of $C^*$-algebras by the first author of this paper in \cite[Lemma 2.16]{argoty2013} by using a generalized version of the same theorem. Our approach to the SB-property in these expansion is again based of the spectrum of the operator and its associated spectral decomposition.

We follow standard terminology and write $IHS$ for the theory of infinite dimensional Hilbert spaces. It is a very well understood theory, it is $\aleph_0$-stable, $\aleph_0$-categorical and has quantifier elimination (for proofs see \cite[Section 15]{ContModelTheory} )
 
 \begin{prop}
 $IHS$ has the SB-property.
 \end{prop}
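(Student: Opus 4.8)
The plan is to reduce the statement to the classical Cantor–Schröder–Bernstein theorem for cardinals, exploiting the fact that Hilbert spaces are classified up to isometric isomorphism by their Hilbert dimension (the cardinality of an orthonormal basis).

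First I would recall the basic model theory of $IHS$: it admits quantifier elimination, its models are exactly the infinite dimensional Hilbert spaces, and isomorphism of models is isometric linear isomorphism. In particular, any elementary embedding $\varphi\colon\mathcal{M}\to\mathcal{N}$ between models of $IHS$ is a linear isometry onto its image: it is linear because $+$ and scalar multiplication belong to the language, it is isometric because the norm (equivalently the inner product) is part of the structure, and its image is a \emph{closed} subspace of $\mathcal{N}$ since $\mathcal{M}$ is complete and $\varphi$ is an isometry.

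The key elementary observation is then: if there is a linear isometric embedding $\mathcal{M}\hookrightarrow\mathcal{N}$, then $\dim\mathcal{M}\le\dim\mathcal{N}$. Indeed, the image of an orthonormal basis of $\mathcal{M}$ is an orthonormal subset of $\mathcal{N}$, and every orthonormal subset of a Hilbert space has cardinality at most its Hilbert dimension (extend it to a maximal orthonormal set and use that all maximal orthonormal sets have the same cardinality). Applying this to elementarily bi-embeddable models $\mathcal{M},\mathcal{N}\models IHS$, the two elementary embeddings give $\dim\mathcal{M}\le\dim\mathcal{N}$ and $\dim\mathcal{N}\le\dim\mathcal{M}$; by Cantor–Schröder–Bernstein for cardinals, $\dim\mathcal{M}=\dim\mathcal{N}$, and since Hilbert spaces of the same Hilbert dimension are isometrically isomorphic, $\mathcal{M}\cong\mathcal{N}$.

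There is essentially no serious obstacle here: the only points requiring a little care are the identification of elementary embeddings of $IHS$ with linear isometric embeddings (which follows from quantifier elimination and the axiomatization of $IHS$) and the cardinality fact that orthonormal subsets are bounded in size by the Hilbert dimension. The proposition is thus a soft consequence of the structure theory of Hilbert spaces, and I would expect the write-up to be short.
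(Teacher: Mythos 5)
Your proof is correct and follows essentially the same route as the paper: both deduce from the two embeddings that the Hilbert dimensions agree and then invoke the classification of Hilbert spaces by the cardinality of an orthonormal basis. Your write-up simply fills in the details (isometries carry orthonormal bases to orthonormal sets, hence the dimension comparison) that the paper's one-line argument leaves implicit.
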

 \begin{proof}
 Note that if  $\mathcal{H}_0,\mathcal{H}_1\models IHS$ are such that there exists two elementary embeddings  $\varphi\colon \mathcal{H}_0\to \mathcal{H}_1$ and $\psi \colon \mathcal{H}_1\to \mathcal{H}_0$, we have that the cardinality of their orthonormal basis is the same. Since every Hilbert space is characterized by the cardinal of its orthonormal base,  then $\mathcal{H}_0\cong \mathcal{H}_1$.
 \end{proof}

Now we will consider pairs of Hilbert spaces.
 
\begin{defn}
Let $\modH\models IHS$ and let $\modL$  the language of Hilbert spaces. Let $\modL_p = \modL\cup\lbrace P\rbrace$ where $P$ is a new unary function and we consider structures of the form $(\modH, P)$, where $P \colon  \modH \rightarrow \modH$ is a projection operator, i.e, $P^2=P$ and $P^*=P$.  We denote by $IHS^P$ the theory of infinite dimensional Hilbert spaces with a linear projection. Finally let $IHS^p_\omega$ be the \textit{theory of beautiful pairs of Hilbert spaces}, which is the theory $IHS^p$ together with axioms stating that there are infinitely many pairwise orthonormal vectors $v$ satisfying $P(v) = v$ and also infinitely many pairwise orthonormal vectors $u$ satisfying $P(u) = 0$, for more details see \cite{berenstein2018hilbert}.
\end{defn}
\begin{prop}\label{SB-lovelyHilbert}
$IHS^P_\omega$ has the SB-property.
\end{prop}
\begin{proof}
Let $(\modH_1,P_1)$ and $(\modH_2,P_2)$ models of $IHS^P_\omega$ such that there exists elementary embeddings $\varphi\colon (\modH_1,P_1)\to(\modH_2,P_2)$ and $\psi\colon (\modH_2,P_2)\to(\modH_1,P_1)$. For each, $i=1,2$ let 
$$H_{P_i}=\lbrace v\in \modH_i\ |\ P_i(v)=v\rbrace$$
then $\modH_i=H_{P_i}\oplus H_{P_i}^\bot$. Then for all $v\in H_{P_1}$, $$P_2(\varphi(v))=\varphi(P_1(v))=\varphi(v),$$ 
so $\dim(H_{P_1})\leq \dim(H_{P_2}),$ 
and analogously  $\dim(H_{P_2})\leq \dim(H_{P_1})$ and so $\modH_{P_1}\cong\modH_{P_2}$ as Hilbert spaces. Similarly, for all $v\in\modH_{P_1}^\bot$ we have that 
$$P_2(\varphi(v))=\varphi(P_1(v))=\varphi(0_{\modH_1})=0_{\modH_2}, $$
so $\dim(H_{P_1}^\bot)\leq \dim(H_{P_2}^\bot)$ and analogously $\dim(H_{P_2}^\bot)\leq \dim(H_{P_1}^\bot)$ and so $\modH_{P_1}^\bot \cong\modH_{P_2}^\bot$  as Hilbert spaces. Let $U_1\colon  H_{P_1}\rightarrow H_{P_2}$ and $U_2\colon  H_{P_1}^\bot\rightarrow H_{P_2}^\bot$ be isomorphisms of Hilbert spaces and let  $U=U_1\oplus U_2$ be the extension of these maps from $\modH_{P_1}\oplus\modH_{P_1}^\bot$ to $\modH_{P_2}\oplus\modH_{P_2}^\bot$, which is again an isomorphism of Hilbert spaces. Let us prove that $U$ is an isomorphism in $IHS^P_\omega$. Indeed, let $h\in\modH_1$ and write $h=v+u$ where $v\in H_{P_1}$ and $u\in H_{P_1}^\bot$. Therefore, we have that $$U(P_1(h))=U(v)=U_1(v)=P_2(U_1(v))=P_2(U(h)),$$
and so $U$ is an isomorphism between $(\modH_1,P_1)$ and $(\modH_2,P_2)$.
\end{proof}

\begin{obs}
There are other completions of $IHS^P$, for example, those expansions where $\dim(P(\modH))=n$, and let $IHS^P_n$ denote that theory. Note that the previous argument also shows that $IHS^P_n$ satisfies the SB-property. 
\end{obs}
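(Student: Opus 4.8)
The plan is to adapt the proof of the previous proposition, the only change being that the dimension of the range of $P$ is now fixed. Suppose $(\modH_1,P_1),(\modH_2,P_2)\models IHS^P_n$ are bi-embeddable, witnessed by embeddings $\varphi\colon(\modH_1,P_1)\to(\modH_2,P_2)$ and $\psi\colon(\modH_2,P_2)\to(\modH_1,P_1)$. As before, for $i=1,2$ put $H_{P_i}=\lbrace v\in\modH_i\ |\ P_i(v)=v\rbrace$, so that $\modH_i=H_{P_i}\oplus H_{P_i}^\bot$. Now $\dim(H_{P_i})=n$ directly from the defining axioms of $IHS^P_n$, so $H_{P_1}\cong H_{P_2}$ as Hilbert spaces with no further argument needed; this replaces the inequalities $\dim(H_{P_1})\le\dim(H_{P_2})$ and $\dim(H_{P_2})\le\dim(H_{P_1})$ used in the $\omega$ case by the trivial equality $n=n$.

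For the orthogonal complements, the key point is that, forgetting $P$, the maps $\varphi$ and $\psi$ are isometric linear embeddings of $\modH_1$ into $\modH_2$ and of $\modH_2$ into $\modH_1$, so $\dim(\modH_1)=\dim(\modH_2)$ exactly as in the proof that $IHS$ has the SB-property. Since $n$ is finite while both models are infinite dimensional, $\dim(H_{P_i}^\bot)=\dim(\modH_i)$, and therefore $\dim(H_{P_1}^\bot)=\dim(H_{P_2}^\bot)$ and $H_{P_1}^\bot\cong H_{P_2}^\bot$ as Hilbert spaces.

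Finally, I would pick Hilbert space isomorphisms $U_1\colon H_{P_1}\to H_{P_2}$ and $U_2\colon H_{P_1}^\bot\to H_{P_2}^\bot$ and set $U=U_1\oplus U_2$. Writing $h=v+u$ with $v\in H_{P_1}$ and $u\in H_{P_1}^\bot$, the same one-line computation as before, $U(P_1(h))=U_1(v)=P_2(U_1(v)+U_2(u))=P_2(U(h))$ (using $U_1(v)\in H_{P_2}$ and $U_2(u)\in H_{P_2}^\bot$), shows that $U$ intertwines $P_1$ and $P_2$, hence is an isomorphism $(\modH_1,P_1)\cong(\modH_2,P_2)$. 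There is essentially no obstacle: the argument is a straightforward specialization of the previous proof, and in fact it can be summarized by observing that a model of $IHS^P_n$ is determined up to isomorphism by the single invariant $\dim(\modH)$, which is preserved under bi-embeddability of the Hilbert space reducts.
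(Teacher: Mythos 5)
Your argument is correct and is essentially the specialization of the paper's proof for $IHS^P_\omega$ that the remark has in mind: decompose each model as $H_{P_i}\oplus H_{P_i}^\bot$, match the pieces, and glue $U=U_1\oplus U_2$, which intertwines the projections. The only (harmless) variation is that you match the complements via $\dim(H_{P_i}^\bot)=\dim(\modH_i)$ using finiteness of $n$, whereas the paper's argument would note directly that $\varphi$ maps $H_{P_1}^\bot$ into $H_{P_2}^\bot$ (since $P_2(\varphi(v))=\varphi(P_1(v))=0$) and conclude the dimension equality from bi-embeddability; both routes are valid.
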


\subsection{Expansions with self adjoint operators}
Now we will deal with expansions of Hilbert spaces with a general self-adjoint operator. We will prove that if two such expansions are elementary bi-embeddable, then the structures are approximately unitarily equivalent (see Definition \ref{approxisom}). It remains unknown to the authors if the $SB$-property holds (see Question \ref{Q:SBadjoint} at the end of the section and the comments thereafter). We start with some background from spectral theory.

\begin{defn}
Let $\modH$ be a Hilbert space. A linear bounded operator $A\colon \modH\rightarrow\modH$ is called \textit{self-adjoint} if $A=A^*$, where $A^*$ is the \textit{adjoint operator} of $A$.
\end{defn}

\begin{defn}
A self-adjoint operator $A$ different from the zero operator is called \textit{positive}, and we write $A \geq 0$, if $\langle Ax,x\rangle\geq0$ for all $x \in \modH$. If $B$ is another self-adjoint operator and $A - B \geq 0$, then the self adjoint operator A is called \textit{greater than or equal} to the self adjoint operator B. A complex number $\lambda$ is called  an \textit{eigenvalue or punctual spectral value} of $A$ if $\dim(Ker(A - \lambda I))>0$. A complex number $\lambda$ is called a \textit{continuous spectral value} if is not an eigenvalue and the operator $A - \lambda I$ is not invertible, i.e, the bounded linear operator $(A-\lambda I)^{-1}$ cannot exist. The \textit{spectrum} of an operator $A$, denoted by $\sigma(A)$, is the set of the punctual and continuous spectral values.  
\end{defn}

We will need the following facts:

\begin{fact}[Lemma 3 section 34 \cite{lusternik1974elements}]\label{dompro}
Let $P_1,P_2$ be projections on a Hilbert space $\modH$. Then the difference $P_1-P_2$ is a projection operator if and only if $P_2$ is a part of $P_1$, i.e. $P_1P_2=P_2$.
\end{fact}

\begin{cor}\label{Comparingprojections} Let $P_1,P_2$ be projections on a Hilbert space $\modH$. If $P_2$ is a part of $P_1$, then $P_1\geq P_2$.
\end{cor}

\begin{proof}
Assume that $P_2$ is a part of $P_1$. Then by Fact \ref{dompro}, the difference $P_1-P_2$ is a projection operator and thus positive. 
\end{proof}

\begin{fact}[Theorem 3 section 31, \cite{lusternik1974elements}]\label{IntervaloEspect}
The spectrum of a self-adjoint operator $A$ lies entirely in the interval $[m,M]$ of the real axis, where $M=\displaystyle\sup_{\| x\|=1}\langle A(x),x\rangle$ and $m=\displaystyle\inf_{\| x\|=1}\langle A(x),x\rangle$.
\end{fact}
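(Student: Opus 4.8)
The plan is to show that every complex number $\lambda$ lying outside $[m,M]$ belongs to the resolvent set, i.e.\ that $A-\lambda I$ is a bijection of $\modH$ onto itself with bounded inverse. I would organize this around two cases, $\lambda$ with nonzero imaginary part and real $\lambda$ with $\lambda>M$ or $\lambda<m$, because in all cases the same mechanism applies: one produces a constant $c>0$ with $\|(A-\lambda I)x\|\ge c\|x\|$ for all $x$, which forces $A-\lambda I$ to be injective with closed range, and then uses self-adjointness to see that the range is also dense, hence all of $\modH$, with the inverse bounded by $1/c$.

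First, for $\lambda=a+ib$ with $b\neq 0$: expanding $\|(A-\lambda I)x\|^2$ and using that $\langle (A-aI)x,x\rangle$ is real because $A-aI$ is self-adjoint, the cross terms cancel and one obtains $\|(A-\lambda I)x\|^2=\|(A-aI)x\|^2+b^2\|x\|^2\ge b^2\|x\|^2$. The same computation applied to $(A-\lambda I)^*=A-\bar\lambda I$ gives the identical bound, so $(A-\lambda I)^*$ is injective and therefore $A-\lambda I$ has dense range; combined with the lower bound this yields invertibility, and in particular $\sigma(A)\subseteq\R$.

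Second, for real $\lambda>M$: for every unit vector $x$ the definition of $M$ gives $\langle (\lambda I-A)x,x\rangle=\lambda-\langle Ax,x\rangle\ge \lambda-M$, and by homogeneity $\langle (\lambda I-A)x,x\rangle\ge(\lambda-M)\|x\|^2$ for all $x$. Since $\lambda I-A$ is self-adjoint, Cauchy--Schwarz gives $(\lambda-M)\|x\|^2\le\langle (\lambda I-A)x,x\rangle\le\|(\lambda I-A)x\|\,\|x\|$, hence $\|(\lambda I-A)x\|\ge(\lambda-M)\|x\|$; bounded below together with self-adjointness (so densely ranged, by the argument above applied to the adjoint, which equals $\lambda I-A$ itself) yields invertibility, so $\lambda\notin\sigma(A)$. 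The case $\lambda<m$ is symmetric, using $\langle (A-\lambda I)x,x\rangle\ge(m-\lambda)\|x\|^2$. Putting the three cases together gives $\sigma(A)\subseteq[m,M]$.

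The main obstacle — indeed essentially the only nontrivial point — is the passage from ``bounded below'' to ``invertible'': a lower bound immediately gives injectivity and closed range, but surjectivity still has to be established, and this is exactly where self-adjointness re-enters, via $\overline{\operatorname{ran}(A-\lambda I)}=\ker\big((A-\lambda I)^*\big)^\perp=\ker(A-\bar\lambda I)^\perp$, the right-hand kernel being trivial because the lower bound applies to the adjoint as well. Everything else reduces to routine quadratic-form estimates and the fact that a self-adjoint operator has real numerical range.
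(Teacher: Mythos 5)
The paper offers no proof of this statement: it is imported verbatim as a Fact (Theorem 31.3 of the cited Lusternik--Sobolev text) and used as a black box, so there is nothing internal to compare your argument against. Your proof is correct and is the standard one: the real numerical range disposes of non-real $\lambda$ via $\|(A-\lambda I)x\|^2=\|(A-aI)x\|^2+b^2\|x\|^2$, the Cauchy--Schwarz estimate gives $\|(\lambda I-A)x\|\geq(\lambda-M)\|x\|$ (resp.\ $(m-\lambda)\|x\|$) for real $\lambda$ outside $[m,M]$, and self-adjointness correctly upgrades ``bounded below'' to invertibility through $\overline{\operatorname{ran}(A-\lambda I)}=\ker(A-\bar\lambda I)^{\perp}$, which is exactly the one nontrivial step and you handle it properly.
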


\begin{fact}[Corollary 5 section 31, \cite{lusternik1974elements}]\label{NoEmptySpec}
 Every self-adjoint operator has a non-empty spectrum.
\end{fact}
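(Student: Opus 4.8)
This is a classical fact from spectral theory, so I expect the author simply refers to \cite{lusternik1974elements}; the argument I would give is the following. The plan is to show directly that the real number $m=\inf_{\|x\|=1}\langle A(x),x\rangle$ — which is finite, and real because $\langle A(x),x\rangle$ is real for self-adjoint $A$ — always lies in $\sigma(A)$, which yields $\sigma(A)\neq\emptyset$ at once. If $A$ is the zero operator then $0\in\sigma(A)$ and there is nothing to prove, so assume $A\neq 0$.

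First I would set $C=A-mI$ and note that $\langle C(x),x\rangle=\langle A(x),x\rangle-m\|x\|^2\geq 0$ for every $x\in\modH$, by the definition of $m$; thus $(x,y)\mapsto\langle C(x),y\rangle$ is a positive semidefinite Hermitian form. Choosing unit vectors $x_n$ with $\langle C(x_n),x_n\rangle\to 0$ (possible since $m$ is an infimum), the key step is to upgrade this to $\|C(x_n)\|\to 0$. For this I would apply Cauchy--Schwarz for the form above with $y=C(x)$, obtaining $\|C(x)\|^{4}\leq\langle C(x),x\rangle\,\langle C(C(x)),C(x)\rangle\leq\|C\|\,\|C(x)\|^{2}\,\langle C(x),x\rangle$, hence $\|C(x)\|^{2}\leq\|C\|\,\langle C(x),x\rangle$ for all $x$; applied to $x_n$ this gives $\|C(x_n)\|\to 0$.

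Finally, since $\|x_n\|=1$ while $\|(A-mI)(x_n)\|\to 0$, the operator $A-mI$ is not bounded below, and a bounded operator that is not bounded below has no bounded inverse; therefore $m\in\sigma(A)$. The only step needing any care is the inequality $\|C(x)\|^{2}\leq\|C\|\langle C(x),x\rangle$ for positive $C$, i.e. the Cauchy--Schwarz trick above; everything else is routine. An alternative, slicker but less elementary, route is to invoke the general theorem that a bounded operator on a complex Banach space has non-empty spectrum (otherwise the resolvent $\lambda\mapsto(\lambda I-A)^{-1}$ would be an entire operator-valued function tending to $0$ at infinity, hence identically $0$ by Liouville, a contradiction); the direct argument is preferable here, though, since it also records that $m=\min\sigma(A)$ and, symmetrically, $M=\max\sigma(A)$, refining the localization of the spectrum in the preceding Fact.
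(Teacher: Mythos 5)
Your argument is correct. Note, however, that the paper does not prove this statement at all: it is quoted as a Fact with a citation to Lusternik--Sobolev (Corollary 31.5), where it follows from the preceding localization result by showing that the endpoints $m=\inf_{\|x\|=1}\langle A(x),x\rangle$ and $M=\sup_{\|x\|=1}\langle A(x),x\rangle$ actually belong to $\sigma(A)$ --- which is essentially the route you take. Your key step, the generalized Cauchy--Schwarz inequality $\|C(x)\|^{2}\leq\|C\|\,\langle C(x),x\rangle$ for the positive operator $C=A-mI$, is valid (the case $C(x)=0$ is trivial, and otherwise one divides by $\|C(x)\|^{2}$), and the conclusion that an operator which is not bounded below on unit vectors cannot be invertible is standard; the separate treatment of $A=0$ is harmless but unnecessary, since the same argument covers it. Your alternative Liouville-type argument would need the scalar field to be complex, whereas the direct argument works over both $\mathbb{R}$ and $\mathbb{C}$ and gives the sharper information $m,M\in\sigma(A)$, so it is the better fit for the context in which the paper uses this fact.
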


\begin{defn}
The \textit{essential spectrum} $\sigma_e(A)$ of a linear operator $A$ is the set of accumulation points of the spectrum of $A$ together with the set of eigenvalues of infinite multiplicity. The \textit{finite spectrum} $\sigma_{\text{Fin}}(A)$ of a linear operator $A$, is the complement of  $\sigma_e(A)$, is the set of isolated points of the spectrum of $A$ of finite multiplicity.
\end{defn}
We now study model-theoretically the expansion $(\modH,A)$ by adding a unary function which is interpreted as $A$ a self-adjoint operator. Note that since $A$ is bounded, it is uniformly continuous. For more details see \cite{Argoty2014model} and \cite{argoty2009hilbert}.
\begin{defn}
Let $(\modH_1,A_1)$ and $(\modH_2,A_2)$ be expansion of Hilbert spaces, where each, $A_i\colon \modH_i\rightarrow \modH_i$ is a self-adjoint operator for $i=1,2$. We say that  $U\colon  \modH_1\rightarrow\modH_2$ is an \textit{embedding} if $U$  is linear isometry and for $x\in\modH_1$ we also have
$$U(A_1(x))=A_2(U(x)). $$
We say that $U$ is an \textit{isomorphism} if $U$ is an surjective elementary embedding. 
\end{defn}

Since $A_1$ is self-adjoint, the $C^*$-algebra generated by $A_1$ is just the closure of the algebra of polynomials in $A_1$ with coefficients in $\mathbb{C}$. In particular, if $U(A_1(x))=A_2(U(x))$, then for every such polynomial $P(A_1)$, we also have $U(P(A_1)(x))=P(A_2)(U(x))$. By Corollary 4.7 \cite{argoty2013} the theory $(\modH_1,A_1)$ has quantifier elimination and thus the family of embeddings defined above are \textit{elementary embeddings}. Notice that $U$ is an \textit{isomorphism} if $U$ is an surjective embedding.

\begin{defn}\label{approxisom}
Let $(\modH_1,A_1)$ and $(\modH_2,A_2)$ be Hilbert spaces expanded with self-adjoint operators. We say that $(\modH_1,A_1)$ and $(\modH_2,A_2)$ are \textit{approximately unitarily equivalent} if there exists a sequence of isomorphisms $(U_n)_{n<\omega}$ from $\modH_1$ to $\modH_2$ such that for every $\epsilon>0$ there exists $N_\epsilon<\omega$ such that for every $n\geq N_\epsilon$, $\| A_2-U_nA_1U_n^*\|<\epsilon.$ 
\end{defn}

\begin{remark}\label{approxisompert}

Using the terminology from \textit{perturbations} in \cite{yaacov2008perturbations}, if the structures $(\modH_1,A_1)$ and $(\modH_2,A_2)$ are approximately unitarily equivalent we may also say that they are \textit{isomorphic up to perturbations of the operator.} 
\end{remark}

\begin{obs}
It is clear that if $U$ is an isomorphism between the expansion $(\modH_1,A_1)$ and $(\modH_2,A_2)$ then the expansions are approximately unitarily equivalent. However, the fact that the expansions $(\modH_1,A_1)$ and $(\modH_2,A_2)$ are approximately unitarily equivalent does not imply that they are isomorphic and also does not imply that they are elementarily bi-embeddable, see example \ref{EJAproxUnitNoIso}. 
\end{obs}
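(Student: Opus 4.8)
The first assertion requires nothing: if $U\colon(\modH_1,A_1)\to(\modH_2,A_2)$ is an isomorphism then $U$ is a surjective linear isometry with $UA_1=A_2U$, hence $U^*=U^{-1}$ and $UA_1U^*=A_2$, so the constant sequence $U_n=U$ satisfies $\|A_2-U_nA_1U_n^*\|=0<\epsilon$ for all $n$ and all $\epsilon>0$; this is precisely the definition of approximate unitary equivalence. For the negative part, the plan is to supply the pair of structures promised in Example~\ref{EJAproxUnitNoIso}. Fix a separable infinite-dimensional Hilbert space $\modH$ and let $A_1$ be the diagonal operator on $\modH$ whose sequence of eigenvalues (each of multiplicity one) is $1,\tfrac12,\tfrac13,\dots$, so that $A_1$ is injective, and let $A_2$ be the diagonal operator on $\modH$ whose sequence of eigenvalues (again each of multiplicity one) is $0,1,\tfrac12,\tfrac13,\dots$, so that $0$ is an eigenvalue of $A_2$. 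Both operators are bounded and self-adjoint of norm $1$, and both act on the same space, so isomorphisms of the two expansions are at least not excluded on dimension grounds.

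It then remains to verify three things. First, $(\modH,A_1)$ and $(\modH,A_2)$ are isomorphic up to perturbations of the operator: one computes $\sigma(A_1)=\sigma(A_2)=\{0\}\cup\{1/n:n\ge1\}$, $\sigma_e(A_1)=\sigma_e(A_2)=\{0\}$ --- because $0$ is an accumulation point of the spectrum in both cases --- and the isolated points $1/n$ of the spectrum carry the same finite multiplicities in $A_1$ and $A_2$ (the discrepancy at the eigenvalue $0$ is invisible to approximate unitary equivalence precisely because $0$ lies in the essential spectrum), so the Weyl--von Neumann--Berg theorem (see \cite{davidson1996c}) gives that $A_1$ and $A_2$ are approximately unitarily equivalent. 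Second, $(\modH,A_1)\not\cong(\modH,A_2)$: any isomorphism $U$ satisfies $UA_1=A_2U$ and hence carries a $\lambda$-eigenvector to a nonzero $\lambda$-eigenvector in both directions, so it would preserve the entire point spectrum with multiplicities, contradicting that $0$ is an eigenvalue of $A_2$ but not of $A_1$. Third, $(\modH,A_1)$ and $(\modH,A_2)$ are not bi-embeddable: an embedding $V\colon(\modH,A_2)\to(\modH,A_1)$ would satisfy $VA_2=A_1V$, so evaluating at a unit $0$-eigenvector $w$ of $A_2$ gives $A_1(Vw)=V(A_2w)=0$, i.e.\ $Vw\in\ker A_1=\{0\}$, which is impossible since $\|Vw\|=\|w\|=1$.

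I expect the only genuine obstacle to be the first verification: pinning down the correct invariants for approximate unitary equivalence and quoting the Weyl--von Neumann--Berg theorem, since the perturbing isometries are produced by an operator-theoretic argument and not by any elementary back-and-forth. The failure of isomorphism and of bi-embeddability, on the other hand, are purely formal, since embeddings of these expansions are by definition the linear isometries intertwining the two operators, and such a map necessarily preserves eigenvectors together with their eigenvalues.
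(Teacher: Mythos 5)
Your proposal is correct and follows essentially the same route as the paper: the paper's own justification is Example~\ref{EJAproxUnitNoIso}, which likewise pairs an injective diagonal operator with eigenvalues $1,\tfrac12,\tfrac13,\dots$ against one that additionally has a nontrivial kernel, invokes the Weyl--von Neumann--Berg theorem (Fact~\ref{WeylVonNeumannBerg}) for approximate unitary equivalence since $0$ lies in the essential spectrum of both, and uses preservation of eigenvectors under intertwining isometries to rule out isomorphism and one direction of embeddability. The only (harmless) difference is cosmetic: you take the kernel of $A_2$ one-dimensional on a single separable space, whereas the paper uses $\ell_2(\N)$ versus $\ell_2(\Z)$ with an infinite-dimensional kernel.
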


\begin{defn}
Let $(\modH_1,A_1)$ and $(\modH_2,A_2)$ be Hilbert spaces expanded with self-adjoint operators. Then $(\modH_1, A_1)$ and $(\modH_2, A_2)$ are said to be \textit{spectrally equivalent}, $(A_1 \sim_\sigma A_2)$ if all of the following conditions holds: 
\begin{itemize}
    \item[i)] $\sigma(A_1)=\sigma(A_2).$
    \item[ii)] $\sigma_e(A_1)=\sigma_e(A_2).$
    \item [iii)] $\dim\left(\lbrace x\in \modH_1\ |\ A_1(x)=\lambda x\rbrace\right)=\dim\left(\lbrace x\in \modH_2\ |\ A_2(x)=\lambda x\rbrace\right)$ for all $\lambda\in\sigma(A_1)\backslash\sigma_e(A_1).$
\end{itemize}
\end{defn}

\begin{fact}[Weyl-von Neumann-Berg Theorem, II.4.4 in \cite{davidson1996c}]\label{WeylVonNeumannBerg} Suppose $A_1$ and $A_2$ are self-adjoint operators on separable Hilbert spaces $\modH_1$ and $\modH_2$, respectively. Then $(\modH_1,A_1)$ and $(\modH_2,A_2)$ are approximately unitarily equivalent if and only if $A_1 \sim_\sigma A_2$.
 
\end{fact}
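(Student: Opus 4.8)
The plan is to prove the two implications separately. This is a classical theorem of operator theory (quoted here from \cite{davidson1996c}), so I only indicate the argument; I treat the bounded self-adjoint case, which is the one we use, the unbounded case following by applying the bounded case to the Cayley transforms or to the bounded resolvents $(A_i+i)^{-1}$.

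For the direction ``approximately unitarily equivalent $\Rightarrow A_1\sim_\sigma A_2$'' the point is that each of the three invariants in the definition of $\sim_\sigma$ is continuous with respect to norm convergence. The basic estimate is that for bounded self-adjoint (indeed normal) operators $B,C$ one has $d_H(\sigma(B),\sigma(C))\le\|B-C\|$ for the Hausdorff distance, which follows from $\|(z-B)^{-1}\|=1/\mathrm{dist}(z,\sigma(B))$ together with a Neumann-series argument. Applied to $U_nA_1U_n^*$ (whose spectrum is $\sigma(A_1)$) and $A_2$, this gives (i). Applied to the images of $U_nA_1U_n^*$ and $A_2$ in the Calkin algebra $\modB(\modH_2)/\modK(\modH_2)$ --- whose quotient map is contractive and carries a self-adjoint operator to a self-adjoint element whose spectrum is its essential spectrum --- together with the unitary invariance of $\sigma_e$, it gives (ii). Finally, for (iii): if $\lambda\in\sigma(A_1)\setminus\sigma_e(A_1)=\sigma(A_2)\setminus\sigma_e(A_2)$ then $\lambda$ is isolated in this common spectrum, so fixing a small gap $\delta$ around $\lambda$ the Riesz spectral projections $E_{U_nA_1U_n^*}((\lambda-\delta,\lambda+\delta))=U_nE_{A_1}(\{\lambda\})U_n^*$, each of rank $\dim\ker(A_1-\lambda)$, converge in norm to $E_{A_2}(\{\lambda\})$ (the resolvents converge uniformly on a circle around $\lambda$ staying at fixed positive distance from the fixed compact set $\sigma(A_1)$), and projections at norm distance $<1$ have equal rank; hence $\dim\ker(A_1-\lambda)=\dim\ker(A_2-\lambda)$.

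For the converse I would follow the Weyl--von Neumann strategy: it is enough to produce, for each $\epsilon>0$, a single unitary $U$ with $\|UA_1U^*-A_2\|<\epsilon$. First I would use the Weyl--von Neumann approximate-diagonalization lemma to write $\|A_i-D_i\|<\epsilon$ with $D_i$ diagonal in a suitable orthonormal basis of $\modH_i$, the diagonal entries lying within $\epsilon$ of $\sigma(A_i)=\sigma(A_1)$ and their local clustering mirroring the spectral measure of $A_i$ (in particular being infinite near $\sigma_e(A_i)$). Rounding the entries of both $D_i$ to a common finite $\epsilon$-net $N$ of $\sigma(A_1)$ produces $\widehat D_i$, which is unitarily equivalent to $\bigoplus_{\nu\in N}\nu\cdot I_{\modH^{(i)}_\nu}$. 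The hypothesis $A_1\sim_\sigma A_2$ then forces $\dim\modH^{(1)}_\nu=\dim\modH^{(2)}_\nu$ for every $\nu\in N$, once $\epsilon$ is small enough that the cell around each $\nu$ either meets $\sigma_e$ --- in which case both dimensions are $\aleph_0$, since a spectral subspace over an open set meeting the essential spectrum is infinite dimensional --- or meets only isolated eigenvalues of finite multiplicity, where the multiplicities agree by (iii). Hence $\widehat D_1$ and $\widehat D_2$ are genuinely unitarily equivalent, and untangling the chain of $\epsilon$-close operators $A_1\approx D_1\approx\widehat D_1\cong\widehat D_2\approx D_2\approx A_2$ yields a unitary $U$ with $\|UA_1U^*-A_2\|$ of order $\epsilon$; since $\epsilon$ was arbitrary this gives approximate unitary equivalence.

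The main obstacle is exactly the Weyl--von Neumann lemma used above --- that a bounded self-adjoint operator on a separable Hilbert space is a norm-arbitrarily-small perturbation of a diagonal operator, with enough control on where the diagonal entries sit. I would prove it by an inductive exhaustion: build an increasing sequence of finite-rank projections $P_j$ almost commuting with $A$ (obtained by cutting the spectral measure of $A$ into finitely many pieces of diameter $<\epsilon$) whose union is the identity, set $D=\sum_j(P_j-P_{j-1})A(P_j-P_{j-1})$, note that $D$ is block-diagonal with each block of spectral spread $<\epsilon$ inside $\sigma(A)$ hence diagonalizable, and bound $\|A-D\|$ via the commutator estimates. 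Berg's contribution, not needed for the self-adjoint case here, is that the same scheme goes through for normal operators with $\R$ replaced by $\C$.
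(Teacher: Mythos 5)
The paper does not prove this statement at all: it is quoted as Fact \ref{WeylVonNeumannBerg} directly from II.4.4 of \cite{davidson1996c}, so there is no internal argument to compare yours with; what you have done is reconstruct, in outline, the standard proof of that cited theorem. Your easy direction is complete and correct: the estimate $d_H(\sigma(B),\sigma(C))\le\|B-C\|$ for self-adjoint operators, its image under the (contractive) quotient onto the Calkin algebra, and the Riesz-projection/rank argument for isolated eigenvalues of finite multiplicity do give conditions (i)--(iii) of spectral equivalence. In the converse direction the sketch follows the Weyl--von Neumann--Berg strategy faithfully but leaves the genuinely delicate point implicit: after approximate diagonalization you must know that the \emph{number} of diagonal entries landing in each cell of the net is the same for both operators, and entries sitting near a cell boundary can be attributed to the wrong cell, so "rounding to a common net" does not automatically match multiplicities. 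The standard repair is to fix the partition (depending on $\epsilon$ only) so that each cell either meets the common essential spectrum --- in which case both spectral subspaces, hence both entry counts, are infinite --- or has boundary at positive distance from the common spectrum, and only then run the Weyl--von Neumann lemma with error much smaller than that distance, so that the counts equal the finite total multiplicities, which agree by (iii); your phrase about the entries "mirroring the spectral measure" is exactly where this bookkeeping must be carried out. Finally, the unbounded case cannot be dispatched quite so casually, since $\|A_2-U_nA_1U_n^*\|$ need not be defined there and the statement must be phrased via resolvents or Cayley transforms; this is harmless here because the paper only ever applies the bounded case.
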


The fact above actually applies to a larger class of operators (see \cite{davidson1996c}).

\begin{ej}\label{EJAproxUnitNoIso}
Consider $(\ell_2(\N),A_1)$ and $(\ell_2(\Z),A_2)$ where 
$$A_1\left((x_n)_{n\in\N}\right)=\left(\dfrac{x_n}{n+1}\right)_{n\in \N} $$
and 
$$A_2\left((x_n)_{n\in\Z}\right)= (y_n)_{n\in\Z}\text{ where } y_n=\left\{\begin{array}{lcc} 0 & if &n\in\Z^-\\ \\ \dfrac{x_n}{n+1}  & if & n\in\Z^{\geq0}  \end{array}\right.$$
Note that $\sigma(A_1)=\lbrace0\rbrace\cup\left\lbrace\dfrac{1}{n+1}\right\rbrace_n$ where $0\in\sigma_e(A_1)$ and $\dim(Ker(A_1))=0$ while $\sigma(A_2)=\lbrace0\rbrace\cup\left\lbrace\dfrac{1}{n+1}\right\rbrace_n$ and $\dim(Ker(A_2))=\infty$, so $(\ell_2(\N),A_1)$ can't be isomorphic to $(\ell_2(\Z),A_2)$. However, by Theorem \ref{WeylVonNeumannBerg} they are approximately unitarily equivalent. Note that we can embed $(\ell_2(\N),A_1)$ in $(\ell_2(\Z),A_2)$ but not viceversa.
\end{ej}

\begin{lem}\label{HilLem1}
Let $(\modH_1,A_1)$ and $(\modH_2,A_2)$ be Hilbert spaces expanded with self-adjoint operators. If $(\modH_1,A_1)$ and $(\modH_2,A_2)$ are elementarily bi-embeddable then $A_1\sim_\sigma A_2$.
\end{lem}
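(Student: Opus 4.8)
The plan is to transport all of the spectral data across the two embeddings. Write $U\colon\modH_1\to\modH_2$ and $V\colon\modH_2\to\modH_1$ for the witnessing embeddings; by definition each is a linear isometry and $UA_1=A_2U$, $VA_2=A_1V$. The whole argument rests on the elementary remark that an isometric intertwiner sends eigenvectors to eigenvectors and approximate eigenvectors to approximate eigenvectors.

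First I would establish clause (iii) of the definition of $\sim_\sigma$. Fix $\lambda\in\C$: if $A_1x=\lambda x$ then $A_2(Ux)=U(A_1x)=\lambda Ux$, so $U$ restricts to an injective linear map $\ker(A_1-\lambda I)\to\ker(A_2-\lambda I)$; symmetrically $V$ restricts to an injective linear map $\ker(A_2-\lambda I)\to\ker(A_1-\lambda I)$. Comparing Hilbert space dimensions and invoking the Cantor--Schröder--Bernstein theorem for cardinals gives $\dim\ker(A_1-\lambda I)=\dim\ker(A_2-\lambda I)$ for every $\lambda$; in particular this holds for $\lambda\in\sigma(A_1)\setminus\sigma_e(A_1)$, which is clause (iii), and it shows that the eigenvalues of infinite multiplicity of $A_1$ and $A_2$ coincide.

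Next I would prove clause (i), $\sigma(A_1)=\sigma(A_2)$. Since the $A_i$ are bounded and self-adjoint, Weyl's criterion applies: $\lambda\in\sigma(A_i)$ if and only if there are unit vectors $x_n\in\modH_i$ with $\|(A_i-\lambda I)x_n\|\to0$. If $\lambda\in\sigma(A_1)$, choosing such $x_n$ produces unit vectors $Ux_n\in\modH_2$ with $\|(A_2-\lambda I)Ux_n\|=\|(A_1-\lambda I)x_n\|\to0$, so $\lambda\in\sigma(A_2)$; applying $V$ gives the reverse inclusion. (Alternatively one can argue that $U(\modH_1)$ is a closed $A_2$-invariant subspace, hence so is its orthogonal complement since $A_2$ is self-adjoint, and read $\sigma(A_1)=\sigma\bigl(A_2|_{U(\modH_1)}\bigr)\subseteq\sigma(A_2)$ off the resulting orthogonal decomposition of $A_2$.)

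Finally, for clause (ii) I would use that $\sigma_e(A_i)$ is by definition the union of the set of accumulation points of $\sigma(A_i)$ with the set of eigenvalues of $A_i$ of infinite multiplicity: the first sets agree by clause (i), and the second sets were matched above, so $\sigma_e(A_1)=\sigma_e(A_2)$. Assembling the three clauses gives $A_1\sim_\sigma A_2$. The only slightly delicate point is the cardinal bookkeeping in the eigenspace comparison and its matching against the stated definition of the essential spectrum; everything else is a routine transfer of spectral information along $U$ and $V$, so I do not expect a real obstacle here.
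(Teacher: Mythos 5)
Your argument is correct and follows essentially the same route as the paper: transport eigenvectors and approximate eigenvectors along the isometric intertwiners $U$ and $V$, compare eigenspace dimensions in both directions to get clause (iii), and deduce (i) and (ii) from Weyl-type approximate eigenvector sequences together with the definition of the essential spectrum. Your uniform use of Weyl's criterion (rather than the paper's separate eigenvalue/non-eigenvalue cases) and your explicit handling of accumulation points versus infinite-multiplicity eigenvalues are only cosmetic differences.
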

\begin{proof}
Let $\varphi\colon (\modH_1,A_1)\rightarrow (\modH_2,A_2)$ and $\psi\colon (\modH_2,A_2)\rightarrow (\modH_1,A_1)$ two elementary embeddings. Let us first see that $\sigma(A_1)=\sigma(A_2).$ Set $\lambda\in\sigma(A_1)$, if $\lambda$ is an eigenvalue then there exists $x\in\modH_1$, $x\neq 0$, such that 
$A_1(x)=\lambda x$, now since 
$$A_2(\varphi(x))=\varphi(A_1(x))=\varphi(\lambda x)=\lambda\varphi(x), $$
we get that $\lambda$ is an eigenvalue of $A_2$. Note that, since $\varphi$ is an isometry, for all $x,y\in\modH_1$, $\langle x,y\rangle=\langle\varphi(x),\varphi(y)\rangle$, so if $\modH_1^\lambda$ is the eigensubpace of $\modH_1$ for the eigenvalue $\lambda$ then 
$\dim(\modH_1^\lambda)\leq\dim(\modH_2^\lambda)$. Since $\psi$ is also an elementary embedding, then $A_1$ and $A_2$ has the same eigenvalues and for all eigenvalue $\lambda\in\sigma(A_1)$, 

$$\dim(\modH_1^\lambda)=\dim(\modH_2^\lambda).\ (*)$$
\\
Now, if $\lambda\in\sigma(A_1)$ but is not an eigenvalue, there exists a sequence $\lbrace x_n\rbrace_n\subseteq\modH_1$ such that, for all $n\in\N$, $\|x_n\|=1$ and $\| A_1(x_n)-\lambda x_n\|\rightarrow0$ as $n\rightarrow\infty$. Since $\varphi$ is an isometry, $\|\varphi(x_n)\|=1$ and $\| A_2(\varphi(x_n))-\lambda \varphi(x_n)\|\rightarrow0$ as $n\rightarrow\infty$, so $\lambda\in\sigma(A_2)$. By a symmetric argument we can conclude that $\sigma(A_1)=\sigma(A_2)$ and by $(*)$, $\sigma_e(A_1)=\sigma_e(A_2)$ and  $\dim\left(\lbrace x\in \modH_1\ |\ A_1(x)=\lambda x\rbrace\right)=\dim\left(\lbrace x\in \modH_2\ |\ A_2(x)=\lambda x\rbrace\right)$ for all $\lambda\in\sigma(A_1)\backslash\sigma_e(A_1).$ So $A_1\sim_\sigma A_2$ as wanted. 
\end{proof}

Note that by Lemma \ref{HilLem1} and Fact \ref{WeylVonNeumannBerg} if $\modH_1$ and $\modH_2$ are separable Hilbert spaces, elementary bi-embeddability implies they are approximately unitarily equivalent. We will show below that if the elementary bi-embeddable condition for $(\modH_1,A_1)$ and $(\modH_2,A_2)$ holds, then the two expansions are approximately unitarily equivalent regardless if they are separable or not. 

\begin{defn}\label{DefnE_+}
Let $A$ be a self-adjoint operator. We write $E_+$ for the projection operator on $Ker(A-|A|)$, where $|A|$ is the positive square root of $A^2$ (see the proof of Theorem 36.1 in \cite{lusternik1974elements} )
\end{defn}

\begin{fact}[Theorem 36.1, \cite{lusternik1974elements}]\label{FactE_+} For every self-adjoint operator $A$, the projection operator $E_+$ defined in definition \ref{DefnE_+} satisfies the properties:
\begin{itemize}
    \item[i)] An arbitrary bounded linear operator C that commutes with $A$ also commutes with $E_+$.
    \item[ii)] $AE_+\geq  0$, $A(Id - E_+) \leq 0$, and
    \item[iii)] If $Ax = 0$, then $E_+x = x$. 
\end{itemize}

\begin{lem}\label{LemCamilo}
   Let $(\modH_1,A_1)$ and $(\modH_2,A_2)$ be Hilbert spaces expanded with positive self-adjoint operators, and let $U\colon (\modH_1,A_1) \to (\modH_2,A_2)$ be an elementary embedding. Let $S_1$ and $S_2$ be the positive square roots of $A_1$ and $A_2$ respectively. Then, for all $u\in\modH_1$, $US_1(u)=S_2U(u)$.
    \end{lem}
    \begin{proof}
   By the proof of the Theorem 9.4-2 in \cite{kreysig}, we have that for all $u\in\modH_1$:
    \[
    S_1(u)=\lim_{n\to \infty}B_n(u),
    \]
     where  the family $\lbrace B_n\rbrace_{n\in\N}$ is defined recursively by $B_0=0$ and for all $n\in \N$, 
     $$B_{n+1}=B_n+\frac{1}{2}(A_1-B_n^2).$$
     In the same way, for all $v\in\modH_2$:
    \[
    S_2(v)=\lim_{n\to \infty}C_n(v),
    \]
    where $C_0=0$ and for all $n\in \N$, $C_{n+1}=C_n+\frac{1}{2}(A_2-C_n^2)$.
   
    Therefore, for all $n\in \N^+$, there exists a polynomial $f_n(x)$ such that $B_n=f_n(A_1)$ and $C_n=f_n(A_2)$. So, for all $n\in \N^+$ and all $u\in\modH_1$ we have that $UB_n(u)=C_nU(u)$. Taking the limit as $n$ goes to infinity, we have that  $US_1(u)=S_2U(u)$.
    \end{proof}

\begin{lem}\label{HilbLem2}
Let $(\modH_1,A_1)$ and $(\modH_2,A_2)$ be Hilbert spaces expanded with self-adjoint operators, and let $E_+^1$ and $E_+^2$ be their projection operators given by Fact \ref{FactE_+} respectively. If $\varphi\colon  (\modH_1,A_1)\rightarrow (\modH_2,A_2)$ is an elementary embedding then $\varphi E_+^1\varphi^{-1}\colon Im(\varphi)\rightarrow Im(\varphi)$ is a projection operator and $\varphi E_+^1\varphi^{-1}\leq E_+^2.$ 
\end{lem}
\begin{proof}
Let $\tilde{\modH_1}=Im(\varphi)$. We will first prove that $\varphi E_+^1\varphi^{-1}:\tilde{\modH_1}\to \tilde{\modH_1}$ is self-adjoint. In order to show this, consider $y_1,y_2\in \Tilde{\modH_1}$ and choose $x_1,x_2\in\modH_1$ such that $y_1=\varphi(x_1)$ and $y_2=\varphi(x_2)$. Then, since $E_+^1$ is self-adjoint and $\varphi$ is an elementary embedding we have
$$\langle \varphi E_+^1\varphi^{-1}(y_1),y_2\rangle=\langle \varphi E_+^1(x_1),\varphi(x_2)\rangle=\langle E_+^1(x_1),x_2\rangle=\langle x_1,E_+^1(x_2)\rangle$$
$$=\langle \varphi(x_1),\varphi E_+^1(x_2)\rangle=\langle y_1,\varphi E_+^1\varphi^{-1}(y_2)\rangle,$$
and so $\varphi E_+^1\varphi^{-1}$ is a self-adjoint operator.

Now let see that $\varphi E_+^1\varphi^{-1}$ is a projection operator. Let $y\in \tilde{H_1}$, and let $x\in\modH_1$ be such that $\varphi(x)=y$, then 
$$ \varphi E_+^1\varphi^{-1}\left(\varphi E_+^1\varphi^{-1}(y)\right)=\varphi E_+^1\varphi^{-1}\left(\varphi E_+^1(x)\right)=\varphi \left((E_+^1)^2(x)\right)=\varphi\left(E_+^1(x)\right)=\varphi E_+^1\varphi^{-1}(y).$$
So $\varphi E_+^1\varphi^{-1}$ is a projection operator.

Finally, it remains to see that $\varphi E_+^1\varphi^{-1}\leq E_+^2$. In order to prove this, we will use Corollary \ref{Comparingprojections}. Since $|A_i|$ is the positive square root of $A_i^2$, for $i=1,2$, by Lemma \ref{LemCamilo} we have that for all $u\in\modH_1$, 
$$|A_2|(\varphi(u))=\varphi(|A_1|(u)).$$ 
Thus if $y\in\Tilde{\modH_1}$ is such that $y=\varphi(x)$ for $x\in\modH_1$ we get that 
$$(A_2-|A_2|)\left(\varphi E_+^1\varphi^{-1}(y)\right)=A_2\left(\varphi E_+^1\varphi^{-1}(y)\right)-\left|A_2\right|\left(\varphi E_+^1\varphi^{-1}(y)\right)$$
$$=\varphi\left(A_1\left(E_+^1(x)\right)\right)-\varphi\left(|A_1|\left(E_+^1(x)\right)\right) $$
$$=\varphi\Big(A_1\left(E_+^1(x)\right)-|A_1|\left(E_+^1(x)\right)\Big)$$
$$=\varphi\Big((A_1-|A_1|)\left(E_+^1(x)\right)\Big) =\varphi(0_{\modH_1})=0_{\modH_2}.$$
Then for every $y\in Im(\varphi)$, $\varphi E_+^1\varphi^{-1}(y)\in Ker(A_2-|A_2|)$ and so $\varphi E_+^1\varphi^{-1}\leq E_+^2$. The reader should notice that in reality $E_+^2$ has to be restricted to $Img(\varphi)=\tilde{\modH_1}$ for the two operators to act on the same space.
\end{proof}
\end{fact}

\begin{defn}\label{DefnElambda}
Let $A$ be a self-adjoint operator. For each $\lambda \in \mathbb{R}$ we write $E_\lambda$ for 
the projection operator $Id-E_+^\lambda$ where $E_+^\lambda$ is the projection operator constructed for $A-\lambda Id$ using Definition \ref{DefnE_+} (see also the proof of Theorem 36.2 in \cite{lusternik1974elements}).
\end{defn}

\begin{fact}[Theorem 36.2, \cite{lusternik1974elements}]
Let $A$ be a self-adjoint operator. Then the family of projection operators $\lbrace E_\lambda\rbrace_{\lambda}$, indexed by $\lambda\in (-\infty,\infty)$, defined as in Definition \ref{DefnElambda} satisfies the following conditions: 
\begin{itemize}
    \item[i)] If $C$ is a bounded linear operator such that $AC=CA$, then $E_\lambda C=CE_\lambda$ for all $\lambda,$
    \item[ii)] $E_\lambda\leq E_\mu$ if $\lambda<\mu$.
    \item[iii)] $E_\lambda$ is continuous on left, i.e, $\displaystyle\bigcup_{\gamma<\lambda}E_\gamma=E_\lambda$,
    \item[iv)] $E_\lambda=0$ for $-\infty<\lambda\leq m$ and $E_\lambda=Id$ for $M<\lambda<\infty$, where $m$ and $M$ are the greatest lower and least upper bounds of $A$, respectively. 
\end{itemize}

\begin{defn}
The family $\lbrace E_\lambda\rbrace_{\lambda}$ is called a \textit{decomposition of the identity} generated by $A$.
\end{defn}

\end{fact}

\begin{fact}\label{SpecTeo}[Spectral decomposition theorem of a self-adjoint operator,Theorem 36.3, \cite{lusternik1974elements}] Let $A$ be a self-adjoint operator. Then for every $\epsilon > 0,$ $A = \displaystyle\int_m^{M+\epsilon} \lambda dE_\lambda$, where the integral is to be interpreted as limit of finite sums in the sense of
uniform convergence in the space of operators, and $m = \displaystyle\inf_{\| x\|=1}\langle A(x),x\rangle$  and $M=\displaystyle\sup_{\| x\|=1}\langle A(x),x\rangle$.
 
\end{fact}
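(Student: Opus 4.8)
The plan is to realize the asserted operator integral as the limit, in the uniform operator topology, of Riemann--Stieltjes sums built from the decomposition of the identity $\{E_\lambda\}$ produced by the preceding fact (Theorem~36.2). Fix a partition $m=\lambda_0<\lambda_1<\cdots<\lambda_n=M+\epsilon$ and set $\Delta_k=E_{\lambda_k}-E_{\lambda_{k-1}}$ for $1\le k\le n$. Since $\{E_\lambda\}$ is an increasing family of projections (item (ii) of Theorem~36.2), each $\Delta_k$ is again a projection, the subspaces $M_k=Im(\Delta_k)$ are pairwise orthogonal, and by item (iv) of Theorem~36.2 one has $\sum_{k=1}^n\Delta_k=E_{M+\epsilon}-E_m=Id-0=Id$; in particular $\modH=\bigoplus_{k=1}^n M_k$. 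Because $A$ commutes with every $E_\lambda$ (item (i)), it commutes with each $\Delta_k$, so each $M_k$ (and its orthogonal complement) is $A$-invariant; write $A_k=A|_{M_k}$, a bounded self-adjoint operator on $M_k$.

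The heart of the argument is the spectral localisation $\lambda_{k-1}\,Id_{M_k}\le A_k\le\lambda_k\,Id_{M_k}$. For the upper bound, a vector $x\in M_k$ satisfies $E_{\lambda_k}x=x$; recalling from the remark after Fact~\ref{FactE_+} that $Id-E_{\lambda_k}$ is the projection onto the positive spectral subspace of $A-\lambda_k Id$, item (ii) of Fact~\ref{FactE_+} (applied to $A-\lambda_k Id$ and rewritten through $Id-E_{\lambda_k}=E_+^{\lambda_k}$) gives $(A-\lambda_k Id)E_{\lambda_k}\le 0$, whence $\langle A_k x,x\rangle=\langle(A-\lambda_k Id)x,x\rangle+\lambda_k\|x\|^2\le\lambda_k\|x\|^2$. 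Symmetrically $x\in M_k$ satisfies $(Id-E_{\lambda_{k-1}})x=x$, and $(A-\lambda_{k-1}Id)(Id-E_{\lambda_{k-1}})\ge 0$ yields $\langle A_k x,x\rangle\ge\lambda_{k-1}\|x\|^2$. Since a positive self-adjoint operator dominated by $c\,Id$ has norm at most $c$, we conclude $\|A_k-\lambda_{k-1}Id_{M_k}\|\le\lambda_k-\lambda_{k-1}$.

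Next I would compare $A$ with the Stieltjes sum $S=\sum_{k=1}^n\lambda_{k-1}\Delta_k$. Using $\sum_k\Delta_k=Id$ we have $A-S=\sum_{k=1}^n(A-\lambda_{k-1}Id)\Delta_k$, and by the $A$-invariance and mutual orthogonality of the $M_k$ this operator is block-diagonal for $\modH=\bigoplus_k M_k$, acting as $A_k-\lambda_{k-1}Id_{M_k}$ on $M_k$. Hence $\|A-S\|=\max_{1\le k\le n}\|A_k-\lambda_{k-1}Id_{M_k}\|\le\max_{1\le k\le n}(\lambda_k-\lambda_{k-1})$. Letting the mesh of the partition tend to $0$ forces $S\to A$ in operator norm; the same estimate shows that replacing $\lambda_{k-1}$ by any tag $\xi_k\in[\lambda_{k-1},\lambda_k]$ alters the sum by at most $\max_k(\lambda_k-\lambda_{k-1})$, so every sequence of tagged Stieltjes sums with mesh going to $0$ converges to $A$. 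This is exactly $A=\int_m^{M+\epsilon}\lambda\,dE_\lambda$; the upper endpoint $M+\epsilon$ rather than $M$ is needed precisely so that item (iv) guarantees $E_{\lambda_n}=E_{M+\epsilon}=Id$.

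I expect the only genuine obstacle to be the positivity bookkeeping in the second paragraph: one must track signs carefully when invoking item (ii) of Fact~\ref{FactE_+} (noting that the operators in play are products of things commuting with $A$, hence self-adjoint) and apply the inequalities on the correct invariant subspaces, and one must use the elementary fact that an operator respecting an orthogonal decomposition $\modH=\bigoplus_k M_k$ has norm equal to the supremum of the norms of its blocks. Everything else is routine estimation, so I would not expect any further difficulty.
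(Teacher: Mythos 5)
The paper does not prove this statement at all: it is imported as a Fact, quoted verbatim from Theorem 36.3 of the cited textbook of Lusternik--Sobolev, so there is no internal proof to compare yours against. Your argument is, in substance, the standard textbook proof of that theorem (the one given in the cited source and in Kreyszig), and it is correct: the localisation $\lambda_{k-1}\,Id_{M_k}\le A_k\le \lambda_k\,Id_{M_k}$ on the ranges of the increments $\Delta_k=E_{\lambda_k}-E_{\lambda_{k-1}}$, the block-diagonal norm estimate $\|A-S\|\le\max_k(\lambda_k-\lambda_{k-1})$, and the role of the upper endpoint $M+\epsilon$ (needed so that $E_{\lambda_n}=Id$ by item (iv)) are exactly the right ingredients, and your tag-independence remark settles the Stieltjes-sum formulation of the integral. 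One caveat on the sign bookkeeping you flag yourself: item (ii) of Fact \ref{FactE_+} as transcribed in the paper reads ``$AE_+\geq 0$, $A(Id-E_+)\geq 0$,'' which taken literally would make every self-adjoint operator positive; this is a transcription typo for $A(Id-E_+)\leq 0$, and your derivation of $(A-\lambda_k Id)E_{\lambda_k}\le 0$ uses the intended (correct) version, so your proof matches the actual source rather than the paper's misprint. No gap beyond that.
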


We now prove the main result of this section, a version  of the $SB$-property up to perturbations:

\begin{teo}
Let $(\modH_1,A_1)$ and $(\modH_2,A_2)$ be Hilbert spaces expanded with self-adjoint operators. If $(\modH_1,A_1)$ and $(\modH_2,A_2)$ are elementarily bi-embeddable then $(\modH_1,A_1)$ and $(\modH_2,A_2)$ are approximately unitarily equivalent.
\end{teo}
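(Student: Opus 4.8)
The plan is to reduce the general (possibly nonseparable) case to the separable case already handled via the Weyl--von Neumann--Berg theorem (Fact \ref{WeylVonNeumannBerg}), using the spectral data preserved by bi-embeddability. By Lemma \ref{HilLem1} the hypothesis gives $A_1\sim_\sigma A_2$, so $\sigma(A_1)=\sigma(A_2)$, $\sigma_e(A_1)=\sigma_e(A_2)$, and the eigenvalue multiplicities agree on the finite spectrum. What is \emph{not} automatic is that the spectral multiplicity of the essential spectrum matches up on the two sides; in the separable case this is invisible because all infinite multiplicities are countable, but in general one must check that bi-embeddability forces the spectral measures (or the decompositions of the identity $\{E_\lambda^1\}$, $\{E_\lambda^2\}$ from Fact \ref{SpecTeo}) to have the same ``multiplicity function''. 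The key observation is that an embedding $\varphi\colon(\modH_1,A_1)\to(\modH_2,A_2)$ intertwines the spectral projections: applying Lemma \ref{HilbLem2} to $A_i-\lambda\,\mathrm{Id}$ for each real $\lambda$ shows $\varphi E_\lambda^1\varphi^{-1}\le E_\lambda^2$ on $\mathrm{Im}(\varphi)$, hence for every Borel set $B\subseteq\R$ the projection $\varphi\,\mathbf{1}_B(A_1)\,\varphi^{-1}$ is dominated by $\mathbf{1}_B(A_2)$. Running the same argument with $\psi$ in the other direction and composing, I get that $\mathrm{rank}\,\mathbf{1}_B(A_1)=\mathrm{rank}\,\mathbf{1}_B(A_2)$ as cardinals, for every Borel $B$ on which these ranks are finite, and more importantly that the two operators have the same spectral multiplicity type.

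From there the construction of the approximating unitaries is a direct ``internal'' Weyl--von Neumann--Berg argument. First I would decompose each $\modH_i$ into the eigenspaces over $\sigma_{\mathrm{Fin}}(A_i)$ and the complementary part $\modH_i'$ on which $A_i$ restricts to an operator with spectrum $\sigma_e(A_i)$ and no isolated eigenvalues of finite multiplicity. On the finite-spectrum part condition (iii) of $\sim_\sigma$ gives an honest unitary intertwining $A_1$ and $A_2$, so it suffices to treat $\modH_i'$. On $\modH_i'$ I would write $\modH_i'$ as an (possibly uncountable) orthogonal sum of separable $A_i$-invariant subspaces, arranged so that corresponding summands on the two sides carry unitarily equivalent-up-to-perturbations restrictions of $A_1$, $A_2$: this is exactly where the matching of spectral multiplicity computed in the first paragraph is used. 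On each separable pair of summands apply Fact \ref{WeylVonNeumannBerg} to get, for each $\epsilon>0$, a unitary $u$ with $\|A_2\!\restriction - u(A_1\!\restriction)u^*\|<\epsilon$; taking the orthogonal sum of these over all summands (the same $\epsilon$ on each) produces a unitary $U_\epsilon\colon\modH_1\to\modH_2$ with $\|A_2-U_\epsilon A_1 U_\epsilon^*\|<\epsilon$. Letting $\epsilon=1/n$ gives the required sequence $(U_n)_{n<\omega}$ witnessing approximate unitary equivalence.

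The main obstacle I anticipate is the bookkeeping in the decomposition of $\modH_i'$ into separable invariant pieces with \emph{matching} spectral multiplicities on the two sides: one needs that the two spectral measures $E^1$, $E^2$ are ``unitarily equivalent up to perturbation'' in a multiplicity-preserving way, and it must be argued that the domination $\varphi E_\lambda^1\varphi^{-1}\le E_\lambda^2$ together with its reverse really does pin down the multiplicity function and not merely the supports $\sigma(A_i)$ and $\sigma_e(A_i)$. A clean way to package this is via a maximal family of mutually orthogonal cyclic subspaces for $A_1$ inside $\modH_1'$ and likewise for $A_2$, showing the corresponding ``spectral multiplicity'' cardinal (the supremum of the lengths of such families localized to a Borel set) is preserved under the bi-embedding; once that invariant is matched, pairing up cyclic pieces of the same multiplicity and invoking the separable result summand-by-summand is routine. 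A minor technical point to handle carefully is that the $U_n$ must be genuine surjections onto $\modH_2$, which follows since we build them as orthogonal sums of unitaries between matched summands whose ranges exhaust $\modH_2$.
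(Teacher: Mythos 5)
Your route is genuinely different from the paper's (you reduce to the separable Weyl--von Neumann--Berg theorem, Fact \ref{WeylVonNeumannBerg}, via a matched decomposition into separable invariant pieces), but as written it has a real gap at its central step. Two concrete problems. First, the spectral data you extract is both overstated and under-justified: Lemma \ref{HilbLem2}, applied to $\pm(A_i-\lambda\,\mathrm{Id})$, controls the spectral projections of half-lines and hence of intervals, but characteristic functions of arbitrary Borel sets are not norm limits of polynomials in $A_i$, so the claim $\varphi\,\mathbf{1}_B(A_1)\,\varphi^{-1}\le \mathbf{1}_B(A_2)$ for every Borel $B$ needs an argument you do not give; and even granting it, composing the two dominations and using Schr\"oder--Bernstein for cardinals only yields equality of the \emph{dimensions} of corresponding spectral subspaces, which is strictly coarser than the multiplicity function, so ``the two operators have the same spectral multiplicity type'' does not follow from what you prove. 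Second, the step where the theorem actually gets proved --- writing each $\modH_i'$ as an orthogonal sum of separable $A_i$-invariant pieces so that \emph{corresponding} pieces have equal spectra, equal essential spectra and matching finite eigenvalue multiplicities (so that Fact \ref{WeylVonNeumannBerg} applies pairwise) and so that the matched unitaries exhaust both spaces --- is precisely the content of a nonseparable Weyl--von Neumann--Berg theorem. You acknowledge it as ``the main obstacle'' and offer only a sketch via cyclic subspaces, including an invariance claim (preservation of the localized cyclic-multiplicity cardinal under bi-embedding) that is itself unproved. As it stands this is a plan, not a proof.

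The paper shows the detour is unnecessary, and its argument also tells you which invariant suffices. Given $\epsilon>0$, Fact \ref{SpecTeo} produces a finite partition $\{\Delta_k\}_{k=0}^{N-1}$ of $[m,M+\epsilon)$ and points $\nu_k\in\Delta_k$ with $\bigl\|A_i-\sum_{k}\nu_k E^i(\Delta_k)\bigr\|<\epsilon/2$; Lemma \ref{HilbLem2} applied to both embeddings gives $\varphi E^1(\Delta_k)\varphi^{-1}\le E^2(\Delta_k)$ and $\psi E^2(\Delta_k)\psi^{-1}\le E^1(\Delta_k)$, hence $\dim H_{E^1(\Delta_k)}=\dim H_{E^2(\Delta_k)}$ for each $k$; a blockwise unitary $U=\bigoplus_k U_k$ between these spectral subspaces then satisfies $\|A_1-U^{-1}A_2U\|<\epsilon$, with no separability assumption, no multiplicity theory, and no appeal to Fact \ref{WeylVonNeumannBerg}. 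If you want to keep your framework, note that all you need (and all your domination argument actually delivers) is equality of the dimensions of the spectral subspaces of the finitely many intervals of such a partition; the multiplicity bookkeeping and the separable reduction can simply be dropped.
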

\begin{proof} 
By Fact \ref{SpecTeo} (and following the notation used in \cite{lusternik1974elements} ) for each $A_i$ and for every $\epsilon>0$ there exists $N$ and an $N$-partition $\left\lbrace[\gamma_k,\mu_k)\right\rbrace_{k=0}^{N-1}$ of $[m,M+\epsilon)$ such that $[m,M+\epsilon)=\displaystyle\bigcup_{k=0}^{N-1}[\gamma_k,\mu_k)$ where if $\Delta_k=[\gamma_k,\mu_k)$, $E^i(\Delta_k)=E^i_{\mu_k}-E^i_{\gamma_k}$ and $\nu_k\in\Delta_k$ then 
$$\left\| A_i-\displaystyle\sum_{k=0}^{N-1}\nu_kE^i(\Delta_k)\right\|<\frac{\epsilon}{2}. $$
Since $(\modH_1,A_1)$ and $(\modH_2,A_2)$ are elementarily bi-embeddable, let $\varphi\colon \modH_1\rightarrow\modH_2$ and $\psi\colon \modH_2\rightarrow\modH_1$ be two elementary embeddings. By Lemma \ref{HilbLem2}, for every $k=0,...,N-1$ we have that $\varphi E^1(\Delta_k)\varphi^{-1}\leq E^2(\Delta_k)$ and $\psi E^2(\Delta_k)\psi^{-1}\leq E^1(\Delta_k)$, then for every $k=0,...,N-1$, $\dim(H_{E^1(\Delta_k)})=\dim(H_{E^2(\Delta_k)})$,  where $H_{E^i(\Delta_k)}$ is the range of the projection operator $E^i(\Delta_k)$. Then for every $k=0,...,N-1$, $H_{E^1(\Delta_k)}\cong H_{E^2(\Delta_k)}$. Now since, $\modH_1\cong\displaystyle\bigoplus_{k=0}^{N-1} H_{E^1(\Delta_k)}$ and $\modH_2\cong\displaystyle\bigoplus_{k=0}^{N-1} H_{E^2(\Delta_k)} $, then $\modH_1$ and $\modH_2$ are isomorphic as Hilbert spaces. For $k=0,...,N-1$ let $U_k\colon  H_{E^1(\Delta_k)}\to H_{E^2(\Delta_k)}$ be an isomorphism  and let $U=\displaystyle\bigoplus_{k=0}^{N-1}U_k$ be the extension of these maps to $\modH_1$ to $\modH_2$, which is again an isomorphism of Hilbert spaces. Note that 
$$\| A_1-U^{-1}A_2U \|\leq\left\|A_1- \displaystyle\sum_{k=0}^{N-1}\nu_kE^1(\Delta_k)\right\|+\left\|\displaystyle\sum_{k=0}^{N-1}\nu_kE^1(\Delta_k)-U^{-1}A_2U\right\|,$$
and that
$$\left\|\displaystyle\sum_{k=0}^{N-1}\nu_kE^1(\Delta_k)-U^{-1}A_2U\right\|=\left\|U\displaystyle\sum_{k=0}^{N-1}\nu_kE^1(\Delta_k)-A_2U\right\|.$$
For any $h\in\modH_1\cong\displaystyle\bigoplus_{k=0}^{N-1}H_{E^1(\Delta_k)}$, we can write $h=\displaystyle\sum_{k=0}^{N-1}h_k$, where $h_k\in H_{E^1(\Delta_k)}$. Since $E^2(\Delta_k)(U(h))=U(h_k)$ we have that 
$$U\displaystyle\sum_{k=0}^{N-1}\nu_kE^1(\Delta_k)(h)=U\left(\displaystyle\sum_{k=0}^{N-1}\nu_kh_k\right)=\displaystyle\sum_{k=0}^{N-1}\nu_kU(h_k)=\sum_{k=0}^{N-1}\nu_kE^2(\Delta_k)U(h), $$
then 
$$\left\|U\displaystyle\sum_{k=0}^{N-1}\nu_kE^1(\Delta_k)-A_2U\right\|=\left\|\displaystyle\sum_{k=0}^{N-1}\nu_kE^2(\Delta_k)U-A_2U\right\|=\left\|\left(\displaystyle\sum_{k=0}^{N-1}\nu_kE^2(\Delta_k)-A_2\right)U\right\|<\dfrac{\epsilon}{2},  $$
and so $\| A_1-U^{-1}A_2U\|< \epsilon$. Thus $(\modH_1,A_1)$ and $(\modH_2,A_2)$ are approximately unitarily equivalent as desired. 
\end{proof}

In the previous theorem we got an approximate version of the $SB$-property. We do not know if the actual $SB$-property holds for $Th(\modH,A)$ and we leave it as a question:

\begin{preg}\label{Q:SBadjoint}
Let $(\modH,A)$ be Hilbert spaces expanded with self-adjoint operator and let $T=Th(\modH,A)$. Does $T$ have the $SB$-property?  
\end{preg}

In the special case when $Th(\modH,A)$ is $\aleph_0$-stable the spectrum is easy to analyze and we get a positive answer to Question \ref{Q:SBadjoint}. Recall that $\sigma(A)$ is closed subset of $\mathbb{R}$ and thus it is either countable (this is the $\aleph_0$-stable case) or has cardinality $2^{\aleph_0}$.

\begin{prop}\label{SB-Exp-SpecCont}
    Let  $(\modH,A)$ be Hilbert spaces expanded with a self-adjoint operator such that $|\sigma(A)|\leq\aleph_0$. Then $T=Th(\modH,A)$ has the SB-property.
\end{prop}
\begin{proof}
 Let $(\modH_1,A_1), (\modH_2,A_2)\models T$ be such that there exists elementary embeddings $\varphi:(\modH_1,A_1)\to(\modH_2,A_2)$ and $\psi:(\modH_2,A_2)\to(\modH_1,A_1)$. By Lemma \ref{HilLem1}, $\sigma(A_1)=\sigma(A_2)=\sigma(A)$. For each $i=1,2$ we have that 
$\modH_i=E_i\oplus E_i^\bot$, where $E_i$ is the linear closed subspace
spanned by all eigenvectors of the self-adjoint operator $A_i$ and $E_i^\bot$ is its orthogonal complement.
Since $\sigma(A)$ is a non-empty (Fact \ref{NoEmptySpec}) countable closed subset of $\mathbb{R}$, it has isolated points, so for each $i=1,2$ we have that $\dim(E_i)>0$.

\textbf{Claim} For $i=1,2$ we have $E_i=\modH_i$.

Assume otherwise. Since $E_i$ is closed under the action of the operator $A_i$, we also have that $E_i^\perp$ is closed under the action of $A_i$. Consider $(E_i^\perp,A_i\restriction_{E_i^\perp})$, it is again a Hilbert space with a self adjoint operator and $\sigma(A_i\restriction_{E_i^\perp}) \subseteq \sigma(A_i)$ and thus it is at most countable. Again by Fact \ref{IntervaloEspect} the spectrum $\sigma(A_i\restriction_{E_i^\perp})$ has isolated points and thus $(E_i^\perp,A_i\restriction_{E_i^\perp})$ has an eigenvector, a contradiction to the definition of $E_i$.

 Since $\varphi$ is an elementary map for each $\lambda$-eigenvector $x$ of $A_1$ we have that $\varphi(x)$ is also a $\lambda$-eigenvector of $A_2$. Thus, for every eigenvalue $\lambda$ of $A_1$ we get $dim(H_1^\lambda)\leq dim(H_2^\lambda)$, where $H_i^\lambda$ is the eigensubspace of $\modH_i$ associated to the eigenvalue $\lambda$. Repeating the same argument with $\psi$ we have $dim(H_2^\lambda)\leq dim(H_1^\lambda)$. Thus $A_1$ and $A_2$ have the same eigenvalues, and for all eigenvalue $\lambda$, $(H_1^\lambda,A_1\restriction_{H_1^\lambda})\cong(H_2^\lambda,A_2\restriction_{H_2^\lambda})$ and so by the Claim $(H_1,A_1)\cong (H_2,A_2)$.

\end{proof}


We end this section by considering Banach lattices, a nice example of a $\omega$-stable and $\omega$-categorical theory that fails to have the SB-property. We thank Ita\"i Ben Yaacov for pointing out the fact that the theory Banach Lattices is multidimensional.

\begin{ej}\label{Banachlattices}
Let $T$ be the theory of $L^1$-Banach lattices, which has quantifier elimination, so all embeddings are elementary embeddings. Consider the models 
$M=L^1([0,1]^{\aleph_1})$ and $N=L^1([0,1]^{\aleph_1}\sqcup [1,2])$, where $[0,1]^{\aleph_1}$ is a homogeneous probability space of density character $\aleph_1$ and $[1,2]$ is a disjoint standard Lebesgue space.
The space $M$ embeds canonically into $N$ extending each function $f\in N$ by defining it as $0$ in $[1,2]$. The map that sends $f(x)\in N$ to
$2f(x/2)$ is an embedding of $N$ into $M$. But clearly the two structures are not isomorphic.
\end{ej}

\section{Probability algebras and expansions by a generic automorphism.}\label{sec_PA}
In this section we will study the SB-property for probability algebras and their expansions with a generic automorphism. We will first show that atomless probability algebras and any completion of the theory of probability algebras have the SB-property. Then we will prove that in the expansion with a generic automorphism the SB-property fails. On the other hand, when we deal with expansions with a generic automorphism, the elementary bi-embeddability condition will give us a weaker form of the SB-property (see Definition \ref{isouppert} and Theorem \ref{SB-APrA}). We assume the reader is familiar with the model theory of 
probability measure algebras, see \cite{berenstein-Henson}, the results from \cite{song2016saturated} and its expansion with a generic automorphism \cite{berenstein2004model}. In any case, we recall some basic facts and definitions.

\begin{defn}
A \textit{probability algebra} is a pair $(\A,\mu)$ where $\A$ is a $\sigma$-complete boolean algebra and $\mu\colon \A\rightarrow[0,1]$ is a $\sigma$-additive probability measure such that $\mu(A)=0$ if and only if $A=\textbf{0}$. 
\end{defn}

\begin{defn}
A probability algebra $(\A,\mu)$ is called \textit{atomless} if for every $A\in\A\backslash\lbrace \textbf{0}\rbrace$ with $\mu(A)>0$, there exist $B\in\A\backslash\lbrace \textbf{0}\rbrace$ with $\mu(B)>0$ such that $A\cap B=B$ and $B\neq A$.
\end{defn}
We give a brief introduction of the theory of probability algebras, \textbf{Pr}, for more details and for an explicit axiomatization see \cite{ben2006schrodinger}, \cite[Section 4]{ben2010continuous} and \cite{berenstein-Henson}. Let $\modL_\textbf{Pr}=\lbrace \textbf{0},\ \textbf{1},\ \cdot^c,\ \cap,\ \cup,\ \mu\rbrace$, where $\textbf{0}$ and $\textbf{1}$ are constant symbols, $\cdot^c$ is a unary function symbol, $\cap$ and $\cup$ are binary functions symbols, and $\mu$ is an unary predicate symbol. Whenever $(\Omega,\A,\mu)$ is a probability space we will write $(\hat{\A},\mu)$ for the probability algebra associated to $(\Omega,\A,\mu)$  and we interpret the symbols $\textbf{0}, \textbf{1}, \cup$, $\cap$, $^c$ in the natural way. 
The theory of all atomless probability algebras is denoted by \textbf{APr}, it is complete and an axiomatization can be found in \cite{berenstein-Henson}.  In general, a completion $T$ of \textbf{Pr} is determined by the sizes of the atoms in any $\modA\models T$ in decreasing order and its  atomless part, more precisely: 

\begin{fact}\label{CompletionsPr}[Corollary 4.18 in \cite{berenstein-Henson}] Let $T$ be any complete $\modL_ {pr}$-theory that extends $\textbf{Pr}$ and let $\lbrace t_n\ |\ n\geq 1\rbrace$ be a sequence such that $1\geq t_1\geq t_2\geq...\geq 0$, $\displaystyle\sum_{n=1}^\infty t_n\leq 1$ and if $\modA\models T$, $\lbrace t_n\ | \ n\geq 1\text{ and } t_n>0\rbrace$ is the measure of the    $n^\text{th}$ largest atom contained in $\modA$, then:
\begin{itemize}
    \item[a)] If $\displaystyle\sum_{n=1}^\infty t_n=1$, then $T$ has a unique model, which consists of an atomic probability algebra having atoms $\lbrace A_n\ |\  n \geq  1\text{  and } t_n>0\rbrace$ such that $\mu(A_n)=t_n $ for all $n$.
    \item[b)] If $\displaystyle\sum_{n=1}^\infty t_n<1$ then the models of $T$ are exactly the probability algebras with atoms as described in $a)$ together with an atomless part of measure $1-\displaystyle\sum_{n=1}^\infty t_n$.
    \end{itemize}

\end{fact}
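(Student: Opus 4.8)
The plan is to prove the classification in two movements: first show that the decreasing sequence of atom measures is an invariant of the complete theory $T$ (so the sequence $\{t_n\}$ is well defined from $T$), and then show conversely that this sequence, together with the fixed total measure $1$, determines $T$ completely, reducing the atomless contribution to the completeness of \textbf{APr}. Once both directions are in hand, items (a) and (b) follow by reading off the measure of the atomless part as $1-\sum_n t_n$ and analyzing the two cases.

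For the invariant direction, I would first note that atom-hood and the measure of an atom are captured by continuous conditions. An element $x$ is an atom of measure $t$ exactly when $\mu(x)=t>0$ and $\sup_{y}\min\big(\mu(x\cap y),\mu(x\cap y^{c})\big)=0$; for a non-atom this supremum is strictly positive, so the condition genuinely isolates atoms. Consequently, for each $k$ and each pair of rationals $a\le b$, the statement ``there exist $k$ pairwise disjoint atoms each of measure in $[a,b]$'' is expressible by an $\modL_{\textbf{Pr}}$-condition. Hence the whole decreasing sequence of atom measures is preserved under elementary equivalence, which shows that $\{t_n\mid t_n>0\}$ is indeed determined by $T$ and equals the list of atom measures of any $\modA\models T$.

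Now fix $T$ with associated sequence $\{t_n\}$. The atoms are pairwise disjoint and of positive measure summing to at most $1$, hence countable, so the join $J=\bigvee_n A_n$ exists by $\sigma$-completeness; write $\modA=\modA_{\mathrm{at}}\oplus\modA_{\mathrm{atl}}$ for the pieces below $J$ and below $J^{c}$. By the previous step $\modA_{\mathrm{at}}$ is the atomic algebra with atom measures $\{t_n\}$ and $\modA_{\mathrm{atl}}$ is atomless of measure $1-\sum_n t_n$. In case (a), when $\sum_n t_n=1$, the atomless part has measure $0$: given any $B\in\modA$, the countable join $C=\bigvee\{A_n\mid A_n\le B\}$ exists, $B\setminus C$ meets no atom, and $\mu(B\setminus C)\le 1-\sum_n t_n=0$ forces $B=C$ by strict positivity of $\mu$. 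Thus $\modA$ is purely atomic and isomorphic to the measure algebra on the countable index set with weights $\{t_n\}$, which is unique up to isomorphism; this gives the uniqueness in (a). In case (b), when $\sum_n t_n<1$, the atomless summand has positive measure $1-\sum_n t_n$, so every model has the stated shape; conversely any probability algebra built from atoms of measures $\{t_n\}$ together with an atomless part of measure $1-\sum_n t_n$ has exactly the atom sequence characterising $T$, hence models $T$ by the completeness argument below. This yields the exact description of models in (b).

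The main obstacle is the completeness step used in (b): that any two probability algebras sharing the atom sequence $\{t_n\}$ (equivalently, the same atomic part and the same atomless total measure) are elementarily equivalent. I would obtain this by combining three ingredients: the atomic parts are literally isomorphic, hence elementarily equivalent; the atomless parts, after rescaling to measure $1$, are models of \textbf{APr}, which is complete, so they are elementarily equivalent regardless of their Maharam type; and a back-and-forth (Feferman--Vaught style) argument matching finite tuples across the orthogonal decomposition $\modA_{\mathrm{at}}\oplus\modA_{\mathrm{atl}}$, using that the measure of a Boolean combination splits as the sum of its atomic and atomless contributions. The delicate point is that this matching must respect the measures of all Boolean combinations simultaneously; quantifier elimination for probability algebras in $\modL_{\textbf{Pr}}$ makes the types over $\emptyset$ depend only on these measures, which is precisely what lets the atomic and atomless analyses be glued into a single elementary equivalence and completes the proof.
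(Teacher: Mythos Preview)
The paper does not give its own proof of this statement: it is recorded as a \emph{Fact} and attributed to Corollary~4.18 of \cite{berenstein-Henson}, so there is nothing in the paper to compare your argument against. Your sketch is a reasonable outline of the standard proof and would be accepted as such; the expressibility of atom-hood via $\sup_y\min(\mu(x\cap y),\mu(x\cap y^c))=0$, the $\sigma$-complete decomposition into atomic and atomless parts, and the reduction of the completeness step to quantifier elimination for \textbf{Pr} together with completeness of \textbf{APr} are exactly the ingredients used in the cited reference.
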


\begin{defn}[Definition 7.4 in \cite{berenstein-Henson}]
Let $\modB\models APr$ and $b\in\modB$, we say that 
$$\modB \! \restriction b=\lbrace a\cap b\rbrace_{a\in\modB}$$
is \textit{homogeneous} if $b\neq \emptyset$ and $\modB\!\restriction a$ has the same metric density as $\modB\! \restriction b$ for every $a\subseteq b$, $a\neq\emptyset$.
\end{defn}

\begin{notation}
For convenience, throughout the rest of this section, whenever $\modA\models APr$ and $b\in \modA$ we will write $I_b^\modA$ for the ideal $\modA\!\upharpoonright b$. If $\modA$ is clear from context, we will just write $I_{b}$.

\noindent If $\{a_i: i\geq 1\}$ are elements from $\modA$, we denote by
$\langle I^\modA_{a_i}\rangle_{i=1}^\infty$ the ideal $I_c$ where $c=\displaystyle \bigcup_{i=1}^\infty a_i$.
\end{notation}

\begin{defn}[Definition 7.5 in \cite{berenstein-Henson}]
Let $\modM=(\modB,\mu)\models APr$ and let $\mathbf{\modK^\modM}$ be the set of all infinite cardinal numbers $\kappa$ for which there
exists $b\in \modB$ such that $I_b$ is homogeneous and the density of $I_b$ is $\kappa$. For each $\kappa\in\modK^\modM$ define 
$$\Psi^\modM(\kappa):=\sup\lbrace \mu(b)\ |\ I_b \text{ is homogeneous and density of } I_b\text{ is }\kappa\rbrace. $$

We say that $b\in\modB$ is \textit{maximal homogeneous} if $I_b$ is homogeneous and $\mu(b)=\Psi^\modM(\kappa)>0$, where $\kappa
$ is the density of $I_b$. We call $(\modK^\modM,\Psi^\modM)$ \textit{Maharam invariants} for the model $\modM$.

We say $\modM$ \textit{realizes} its Maharam invariants if there exists a family $\lbrace b_\kappa\ |\ \kappa\in\modK^\modM\rbrace$ of pairwise disjoint maximal homogeneous elements of $\modB$ such that $I_{b_\kappa}$ has density $\kappa$ for every $\kappa\in\modK^\modM$ and $\sum\lbrace \mu(b_\kappa)\ |\ \kappa\in\modK^\modM\rbrace=1$.
\end{defn}

\begin{fact}[Proposition 7.10 in \cite{berenstein-Henson}]\label{RealizaMaharam} Every model $\modM \models APr$ realizes its Maharam
invariants. In particular, this means that $\modK^\modM$ is nonempty and countable.  

\end{fact}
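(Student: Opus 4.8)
The statement is essentially the measure‑algebra reformulation of Maharam's theorem, so the plan is to read off the required family of maximal homogeneous elements directly from a Maharam decomposition of $\modM$. Write $\modM=(\modB,\mu)$ and apply the remark following Fact \ref{TeoMaharam}: there is an at most countable index set $I$, pairwise distinct infinite cardinals $\{\kappa_i\}_{i\in I}$ and strictly positive reals $\{\alpha_i\}_{i\in I}$ with $\sum_{i\in I}\alpha_i=1$, together with an isomorphism $\modB\cong\bigoplus_{i\in I}\alpha_i\cdot[0,1]^{\kappa_i}$. For each $i$ let $b_i\in\modB$ be the element sent to the top of the $i$‑th summand, so that $\modB\restriction b_i$ is, up to rescaling the measure by $\alpha_i^{-1}$, the measure algebra of $[0,1]^{\kappa_i}$.

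Two classical facts about these factors do the work: the measure algebra of $[0,1]^{\kappa}$ has density exactly $\kappa$, and it is Maharam‑homogeneous, meaning that every nonzero element $a$ has $\modB\restriction a$ again isomorphic, after rescaling, to the measure algebra of $[0,1]^{\kappa}$, in particular of density $\kappa$. Hence each $b_i$ is homogeneous with $\modB\restriction b_i$ of density $\kappa_i$, so $\kappa_i\in\modK^\modM$; the $b_i$ are pairwise disjoint since they lie in distinct summands; and $\sum_{i\in I}\mu(b_i)=\sum_{i\in I}\alpha_i=1$. Since the decomposition is nontrivial, $I\ne\emptyset$, so $\modK^\modM$ is nonempty, and it is countable because $I$ is.

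It remains to verify maximality, $\mu(b_i)=\Psi^\modM(\kappa_i)$, together with $\modK^\modM=\{\kappa_i:i\in I\}$. The inequality $\alpha_i\le\Psi^\modM(\kappa_i)$ is immediate from the previous paragraph. Conversely, let $b$ be homogeneous with $\modB\restriction b$ of density $\kappa$; writing $b=\bigvee_{j\in I}(b\wedge b_j)$ as a disjoint join, choose $j$ with $b\wedge b_j\ne\mathbf{0}$. Homogeneity of $b$ forces $\modB\restriction(b\wedge b_j)$ to have density $\kappa$, whereas $b\wedge b_j$ is a nonzero element of $\modB\restriction b_j$, so Maharam‑homogeneity of the $j$‑th factor forces density $\kappa_j$; hence $\kappa=\kappa_j$. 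This already gives $\modK^\modM\subseteq\{\kappa_i:i\in I\}$, and therefore equality. Moreover, because the $\kappa_i$ are pairwise distinct, $j$ is the only index with $b\wedge b_j\ne\mathbf{0}$, so $b\le b_j$ and $\mu(b)\le\alpha_j$; specializing to $\kappa=\kappa_i$ gives $\Psi^\modM(\kappa_i)\le\alpha_i=\mu(b_i)$. Thus each $b_i$ is maximal homogeneous, and reindexing $b_\kappa:=b_i$ for the unique $i$ with $\kappa_i=\kappa$ produces a family $(b_\kappa\mid\kappa\in\modK^\modM)$ of pairwise disjoint maximal homogeneous elements of total measure $1$, so $\modM$ realizes its Maharam invariants. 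The only genuinely non‑formal ingredient is the density and Maharam‑homogeneity of $[0,1]^{\kappa}$ used above; everything else is bookkeeping with the direct‑sum decomposition, and the pairwise distinctness of the $\kappa_i$ is precisely what prevents a homogeneous element from straddling two factors — that is the step to be careful about.
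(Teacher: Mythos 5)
Your argument is correct, but note that the paper itself offers no proof of this statement: it is quoted as a Fact from Proposition 7.10 of the Berenstein--Henson chapter, where the Maharam invariants $(\modK^\modM,\Psi^\modM)$ are developed directly inside the model-theoretic framework (essentially by producing homogeneous elements below arbitrary elements and running an exhaustion/maximality argument), and the classical Maharam theorem is recovered afterwards as Theorem 7.18. You instead take the classical Maharam decomposition (Fact \ref{TeoMaharam} and the remark following it) as the starting point and read the realizing family off the summands: the tops $b_i$ of the factors $\alpha_i\cdot[0,1]^{\kappa_i}$ are homogeneous of density $\kappa_i$ by Maharam homogeneity of $[0,1]^{\kappa_i}$, they are pairwise disjoint with total measure $1$, and your observation that a homogeneous $b$ cannot straddle two factors (since otherwise its density would equal two distinct $\kappa_j$'s) correctly yields both $\modK^\modM=\{\kappa_i\}_{i\in I}$ and the maximality $\mu(b_i)=\Psi^\modM(\kappa_i)$, hence nonemptiness and countability of $\modK^\modM$. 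This is legitimate within the present paper, since the classical theorem is itself quoted as a Fact, and it buys a shorter, purely bookkeeping derivation; the cost is that it inverts the logical order of the cited source, whose point is to obtain these invariants without presupposing the classical decomposition, and it leans on the unproved (though standard) inputs that the measure algebra of $[0,1]^{\kappa}$ has density $\kappa$ and is homogeneous. Also note the mild abuse in writing $b=\bigvee_{j}(b\wedge b_j)$ when $I$ is infinite: this join is countable, so it is unproblematic in a probability algebra, but it deserves the one-line justification.
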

\begin{fact}[Theorem 7.18, Maharam's theorem \cite{berenstein-Henson}]\label{MaharamMod}
Every model $\modM\models Pr$  is determined up to isomorphism by its invariants given in Fact \ref{CompletionsPr} for the atomic part and its Maharam invariants $(\modK^\modM,\Psi^\modM)$ for the atomless part.
\end{fact}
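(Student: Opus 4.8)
The plan is to prove the classification by reducing $\modM$ to its atomic and atomless parts, matching the two parts separately using the invariants, and then gluing the resulting isomorphisms. Concretely, I would show that if $\modM=(\A,\mu)$ and $\modN=(\B,\mu)$ are models of $Pr$ with the same atomic invariants $\lbrace t_n\rbrace$ from Fact \ref{CompletionsPr} and the same Maharam invariants $(\modK^\modM,\Psi^\modM)=(\modK^\modN,\Psi^\modN)$, then $\modM\cong\modN$.

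First I would produce the canonical splitting. In a probability algebra every nonzero element has positive measure, so every atom has positive measure; since the atom measures sum to at most $1$ and $\A$ is $\sigma$-complete, there are at most countably many atoms, say $\lbrace a_n\rbrace$ with $\mu(a_n)=t_n$ listed in decreasing order. Setting $e=\bigcup_n a_n$ and $c=e^c$, one gets a measure-preserving decomposition $\A\cong(\A\restriction e)\oplus(\A\restriction c)$ in which $\A\restriction e$ is purely atomic of total measure $\sum_n t_n$ and $\A\restriction c$ is atomless of measure $1-\sum_n t_n$; the same applies to $\modN$. This step is elementary from $\sigma$-completeness and the fact that $\mu(A)=0$ iff $A=\textbf{0}$.

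For the atomic summand, equality of the atomic invariants means the sequences $\lbrace t_n\rbrace$ coincide, so the bijection sending the $n$-th atom of $\modM$ to the $n$-th atom of $\modN$ induces a measure-preserving Boolean isomorphism $\A\restriction e\to\B\restriction e$. For the atomless summand I would invoke the measure-theoretic Maharam theorem (Fact \ref{TeoMaharam}) together with the realization fact (Proposition 7.10 in \cite{berenstein-Henson}): the atomless part is isomorphic to a convex combination $\bigoplus_i \Psi^\modM(\kappa_i)\cdot[0,1]^{\kappa_i}$ indexed by $\modK^\modM$, the weight of the density-$\kappa$ homogeneous block being exactly $\Psi^\modM(\kappa)$. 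Since $\modM$ and $\modN$ share the pair $(\modK,\Psi)$, their atomless parts have identical Maharam decompositions, and assembling the block isomorphisms $[0,1]^{\kappa}\to[0,1]^{\kappa}$ yields a measure-preserving isomorphism $\A\restriction c\to\B\restriction c$.

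Finally I would glue. Because the atomic parts carry equal total measure $\sum_n t_n$ and the atomless parts equal measure $1-\sum_n t_n$ in both models, the direct sum of the two isomorphisms above is again measure preserving, hence an isomorphism $\modM\cong\modN$ of probability algebras. The genuinely hard step is the atomless matching, whose entire content is Maharam's classification of homogeneous measure algebras; the atomic matching and the gluing are routine once the decomposition and the realization of the invariants are available. Accordingly, this is exactly the place where I would lean on Fact \ref{TeoMaharam} and the realization fact rather than reprove them, so that the present statement follows as their synthesis with the atomic classification of Fact \ref{CompletionsPr}.
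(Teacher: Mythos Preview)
The paper does not prove this statement at all: it is recorded as a \emph{Fact} with a citation to \cite{berenstein-Henson} (Theorem~7.18 there), so there is no ``paper's own proof'' to compare against. Your outline is a correct reconstruction of the standard argument---split $\modM$ into its atomic and atomless summands via $\sigma$-completeness, match the atomic parts by the shared sequence $\{t_n\}$ from Fact~\ref{CompletionsPr}, match the atomless parts via Maharam's classification (Fact~\ref{TeoMaharam}) and the realization of the Maharam invariants, and glue---and this is exactly how the cited reference proceeds, so nothing further is needed.
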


\begin{teo}\label{AprSB-prop}
$APr$ has the SB-property. 
\end{teo}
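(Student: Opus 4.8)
The plan is to reduce the statement to the uniqueness of Maharam invariants, exactly as in the Hilbert-space arguments earlier in the paper. Suppose $\modM=(\modB_1,\mu_1)$ and $\modN=(\modB_2,\mu_2)$ are models of $APr$ and $\varphi\colon\modM\to\modN$, $\psi\colon\modN\to\modM$ are elementary embeddings. By the (atomless) Maharam theorem, Fact \ref{TeoMaharam} and the accompanying remark, each model is determined up to isomorphism by a pair: a countable set of infinite cardinals together with the associated strictly positive weights summing to $1$; equivalently by the Maharam invariants $(\modK^{\modM},\Psi^{\modM})$ and $(\modK^{\modN},\Psi^{\modN})$. So it suffices to show that bi-embeddability forces $\modK^{\modM}=\modK^{\modN}$ and $\Psi^{\modM}=\Psi^{\modN}$.

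The core of the argument is that an elementary embedding $\varphi\colon\modM\to\modN$ transports a maximal homogeneous element $b\in\modB_1$ of density $\kappa$ to an element $\varphi(b)\in\modB_2$ with $\mu_2(\varphi(b))=\mu_1(b)$, and moreover $\modB_2\restriction\varphi(b)$ still has density (at least) $\kappa$ and $\varphi(b)$ sits inside the maximal homogeneous element of density $\kappa$ in $\modN$; hence $\kappa\in\modK^{\modN}$ and $\Psi^{\modM}(\kappa)\le\Psi^{\modN}(\kappa)$. The key point here is that density of $\modB\restriction b$ is captured by the elementary diagram: $\modB\restriction b$ has density $\ge\kappa$ iff there is a family of $\kappa$ pairwise ``$\epsilon$-independent'' elements below $b$ for every $\epsilon$, and this is visible to $\varphi$ as an isometric boolean-algebra embedding preserving measure. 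Running the symmetric argument with $\psi$ gives $\modK^{\modM}=\modK^{\modN}$ and $\Psi^{\modM}(\kappa)=\Psi^{\modN}(\kappa)$ for every $\kappa$, using that each $\Psi$ is a supremum over a set constrained on both sides and that the total measures both equal $1$ (so one cannot have strict inequality at any single $\kappa$ without violating it at another, and symmetry then closes the gap). By the atomless Maharam classification, $\modM\cong\modN$.

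To be safe I would phrase the transport-of-density step directly in terms of the metric density of the localized algebra rather than through a syntactic characterization: since $\varphi$ is an isometry of the associated metric structures and a boolean embedding, the density character of $\modB_1\restriction b$ equals the density character of $\varphi(\modB_1\restriction b)\subseteq\modB_2\restriction\varphi(b)$, which is at most the density of $\modB_2\restriction\varphi(b)$; homogeneity of the ambient maximal homogeneous piece of $\modN$ then pins the relevant cardinal. The main obstacle I anticipate is exactly this lemma — verifying that ``maximal homogeneous of density $\kappa$'' behaves well under elementary embeddings, i.e.\ that $\varphi(b)$ is contained (up to a null set) in the canonical density-$\kappa$ component of $\modN$ and captures its full weight in the limit — rather than merely landing in some larger-density or mixed piece. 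Once that monotonicity of the Maharam invariants under embeddings is established, combining the two directions and invoking Fact \ref{TeoMaharam} is routine.
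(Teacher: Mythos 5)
Your overall strategy (reduce to uniqueness of the Maharam invariants and show the two embeddings force the invariants to coincide) is the same as the paper's, but the central transport lemma as you state it is false, and this is a genuine gap. A single elementary embedding does \emph{not} give $\kappa\in\modK^{\modN}$, nor $\Psi^{\modM}(\kappa)\le\Psi^{\modN}(\kappa)$, nor that $\varphi(b)$ sits inside a density-$\kappa$ component of $\modN$: take $\modM$ to be the separable model $[0,1]^{\aleph_0}$ and $\modN=[0,1]^{\aleph_1}$. Then $\modM$ embeds elementarily into $\modN$, the maximal homogeneous element of $\modM$ is $\mathbf{1}$ with density $\aleph_0$, but its image is the top of $\modN$, which lies in the unique homogeneous component, of density $\aleph_1$; so $\aleph_0\notin\modK^{\modN}$ and the weight at $\aleph_0$ drops from $1$ to $0$ under the embedding. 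What a single embedding does give is only monotonicity of the \emph{cumulative} weights: since density can only go up (your subspace-density argument shows the localization at the image has density $\ge\kappa$, and one still has to rule out the ``mixed'' case where part of the image meets lower-density components with positive measure), one gets $\sum_{\kappa'\ge\kappa}\Psi^{\modM}(\kappa')\le\sum_{\kappa'\ge\kappa}\Psi^{\modN}(\kappa')$ for every $\kappa$, not the pointwise inequality. Your closing remark that ``symmetry then closes the gap'' is exactly where the real work lies, and as written it is not an argument.

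The paper supplies precisely this missing step. It writes $\modA\cong\bigoplus_{i}\alpha_i\cdot[0,1]^{\kappa_i}$ and $\modB\cong\bigoplus_{i}\beta_i\cdot[0,1]^{\lambda_i}$ with the cardinals listed in increasing order, assumes the algebras are not isomorphic, takes the \emph{least} index $i_0$ where $(\alpha_{i_0},\kappa_{i_0})\neq(\beta_{i_0},\lambda_{i_0})$, and in each case (either $\lambda_{i_0}<\kappa_{i_0}$, or equal cardinals but $\alpha_{i_0}<\beta_{i_0}$) uses the fact that a homogeneous piece of density $\kappa$ cannot embed into pieces of strictly smaller density to force the high-density tail of one algebra to embed into a strictly smaller tail of the other; comparing tail measures together with $\sum_i\alpha_i=\sum_i\beta_i=1$ then yields a contradiction such as $\beta_{i_0}\le 0$. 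If you replace your pointwise monotonicity claim by the cumulative (tail-sum) monotonicity and then run this least-disagreement, measure-counting argument using both embeddings, your outline becomes the paper's proof; without that, the key lemma you rely on fails already for one embedding.
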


\begin{proof}
Let $\mathcal{A},\mathcal{B}\models APr$ such that there exists elementary embeddings  $\varphi\colon \mathcal{A}\to\ \mathcal{B}$ and $\psi\colon \mathcal{B} \to \mathcal{A}$. By Fact \ref{MaharamMod}, since $\modA$ and $\modB$ are atomless, they are characterized by their Maharam invariants $(\modK^\modA,\Psi^\modA)$ and $(\modK^\modB,\Psi^\modB)$ respectively. In this proof we will assume that $\modK^\modA$ and $\modK^\modB$ are countably infinite, the proof when they are finite is similar.  

Let $\modK^\modA=\lbrace \kappa_i\ |\ i<\omega\rbrace$ and for each $i<\omega$ let $\alpha_i=\Psi^\modA(k_i)$. Similarly, let $\modK^\modB=\lbrace \lambda_i\ |\ i<\omega\rbrace$ and for each $i<\omega$ let $\beta_i=\Psi^\modB(\lambda_i)$. We may order the elements in  $\modK^\modA$ so that $\kappa_{i_1}<\kappa_{i_2}$ whenever $i_1<i_2$, and similarly for $\modK^\modB$.   

In order to obtain a contradiction, assume that $\mathcal{A}\ \not\cong\ \mathcal{B}$, so $\lbrace i<\omega\ |\ (\kappa_i,\alpha_i)\neq(\lambda_i,\beta_i)\rbrace\neq\emptyset$. Take the minimum $i<\omega$ in the previous set and call it $i_0$. By Fact \ref{RealizaMaharam} there are $\lbrace a_i\rbrace_{i<\omega}\subseteq\modA$ maximal homogeneous events in $\modA$ such that $I^\modA_{a_i}$ has density $\kappa_i$  and so $\mu(a_i)=\alpha_i$. Similarly, there are $\lbrace b_i\rbrace_{i<\omega}\subseteq\modB$ maximal homogeneous events in $\modB$ such that $I^\modB_{b_i}$ has density $\lambda_i$ and $\mu(b_i)=\beta_i$.
Therefore, without loss of generality we have these two cases: 
\begin{itemize}
    \item[Case 1:\hspace{-0.5cm}]\hspace{0.5cm} $\lambda_{i_0}<\kappa_{i_0}$.
    \\
     Since $\mathcal{A}\ \hookrightarrow\ \mathcal{B}$ but $I^\modA_{a_{i_0}}\not\hookrightarrow I^\modB_{b_{i_0}}$ we must have that 
    $$\langle I^\modA_{a_i}\rangle_{i=i_0}^\infty\hookrightarrow\langle I^\modB_{b_i}\rangle_{i=i_0+1 }^\infty$$
    Then it follows that $\displaystyle\sum_{i=i_0}^\infty\alpha_i\leq\sum_{i=i_0+1}^\infty\beta_i$ which is equivalent to $\displaystyle\sum_{i=0}^{i_0-1}\alpha_i\geq\sum_{i=0}^{i_0}\beta_i$. However, by definition of $i_0$ for all $i\in\lbrace 0,...,i_0-1\rbrace$, $\alpha_i=\beta_i$, so we must have that $0\geq\beta_{i_0}$ which is a contradiction. So $\lambda_{i_0}\geq\kappa_{i_0}$. In a similar way we can show that $\kappa_{i_0}\geq\lambda_{i_0}$ and we have that $\lambda_{i_0}=\kappa_{i_0}$, so this case is not possible.
    \item[Case 2:\hspace{-0.5cm}]\hspace{0.5cm} $\alpha_{i_0}<\beta_{i_0}.$
    \\
    By the previous case, we know that  $\lambda_{i_0}=\kappa_{i_0}$. By hypothesis we have that  $\alpha_{i_0}<\beta_{i_0}$ and since $$1=\displaystyle\sum_{i<\omega}\alpha_i=\sum_{i<\omega}\beta_i\text{ and } \displaystyle\sum_{i<i_0}\alpha_i=\displaystyle\sum_{i<i_0}\beta_i$$
     we also have that
    $$\displaystyle\sum_{i=i_0+1}^\infty\alpha_i>\sum_{i=i_0+1}^\infty\beta_i.$$
    However, as $I^\modA_{a_{i_0+1}}\not\hookrightarrow I^\modB_{b_{i_0}}$  and $\modA\ \hookrightarrow\ \modB$ we must have
    $$\displaystyle\langle I^\modA_{a_i}\rangle_{i=i_0+1}^\infty\hookrightarrow\langle I^\modB_{b_i}\rangle_{i=i_0+1}^\infty $$
    which means that $\displaystyle\sum_{i=i_0+1}^\infty\alpha_i\leq\sum_{i=i_0+1}^\infty\beta_i$ and we have a contradiction, therefore $\alpha_{i_0}\geq\beta_{i_0}$. In a similar way we have $\beta_{i_0}\geq\alpha_{i_0}$, so $\alpha_{i_0}=\beta_{i_0}$ so this case is neither possible.
    
\end{itemize}
In this way, we have shown that $\lbrace i<\omega\ |\ (\kappa_i,\alpha_i)\neq(\lambda_i,\beta_i)\rbrace=\emptyset$ thus by Fact \ref{MaharamMod}, $\mathcal{A}\cong\mathcal{B}.$
\end{proof}

\begin{cor}
Any completion of $Pr$ has the SB-property.
\end{cor}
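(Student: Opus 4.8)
The plan is to reduce the statement for an arbitrary completion $T$ of $\textbf{Pr}$ to the atomless case handled in Theorem \ref{AprSB-prop}. By Fact \ref{CompletionsPr}, a model $\modM \models T$ decomposes canonically as $\modM \cong \modM_{at} \oplus \modM_0$, where $\modM_{at}$ is the atomic part determined by the fixed sequence $\{t_n\}$ and $\modM_0$ is an atomless probability algebra of fixed measure $1 - \sum_n t_n$. If $\sum_n t_n = 1$, then $T$ is already $\aleph_0$-categorical (it has a unique model by Fact \ref{CompletionsPr}(a)), so the SB-property is trivial. Hence assume $\sum_n t_n < 1$, write $c = 1 - \sum_n t_n > 0$, and fix two models $\modA, \modB \models T$ with embeddings $\varphi \colon \modA \to \modB$ and $\psi \colon \modB \to \modA$.

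The key step is to show that embeddings respect the atomic/atomless decomposition. An atom of $\modA$ of measure $t$ must map under $\varphi$ to an element of $\modB$ of the same measure $t$ (measure is preserved), and since $\varphi$ is a boolean-algebra embedding preserving measure, the image of an atom cannot be split in $\modB$ into two pieces of strictly smaller positive measure whose preimages would split the atom; hence $\varphi$ sends atoms to atoms and so maps $\modA_{at}$ into $\modB_{at}$. Because the atomic parts of $\modA$ and $\modB$ are both the fixed algebra determined by $\{t_n\}$ (same atoms, same measures), $\varphi$ restricts to an isomorphism $\modA_{at} \cong \modB_{at}$, and consequently $\varphi$ maps the atomless part $\modA_0$ into $\modB_0$ (the complement of the span of the atoms), rescaling the measure by the common factor $1/c$. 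The same applies to $\psi$. Thus $\varphi$ and $\psi$ induce embeddings between the (rescaled) atomless probability algebras $\modA_0$ and $\modB_0$, so $\modA_0$ and $\modB_0$ are bi-embeddable.

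Now apply Theorem \ref{AprSB-prop}: since $APr$ has the SB-property and $\modA_0, \modB_0$ are bi-embeddable atomless probability algebras (after normalizing the total measure to $1$), we get $\modA_0 \cong \modB_0$. Combining this with the already-noted isomorphism $\modA_{at} \cong \modB_{at}$ and taking the direct sum yields an isomorphism $\modA \cong \modB$ in $\modL_{\textbf{Pr}}$.

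The main obstacle I expect is the bookkeeping showing that an $\modL_{\textbf{Pr}}$-embedding must send atoms to atoms and therefore respects the canonical decomposition; once that is established, everything else is a routine reduction. One should be slightly careful that ``embedding'' here means a measure-preserving boolean embedding (as it must, since these are metric structures and the measure predicate is part of the language), which is exactly what makes the atom-preservation argument go through. An alternative, cleaner route avoiding the decomposition argument entirely would be to invoke Fact 7.18 (the atomic version of Maharam's theorem from \cite{berenstein-Henson}): a model of $\textbf{Pr}$ is determined up to isomorphism by its atomic invariants (fixed by $T$) together with the Maharam invariants of its atomless part, and then run the same minimal-counterexample argument on the sequence of Maharam cardinals and weights as in the proof of Theorem \ref{AprSB-prop}, noting that the atomic parts contribute equally to both sides.
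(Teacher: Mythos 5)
Your overall strategy is the same as the paper's: split each model into its atomic and atomless parts via Fact \ref{CompletionsPr}, observe that the atomic parts are forced by $T$, reduce bi-embeddability to the (rescaled) atomless parts, and finish with Theorem \ref{AprSB-prop}. However, the justification you give for the crucial step --- that an embedding sends atoms to atoms --- does not work as written. You argue that the image of an atom ``cannot be split in $\modB$ into two pieces of strictly smaller positive measure whose preimages would split the atom,'' but an embedding need not be surjective, so the two pieces of a splitting of $\varphi(a)$ inside $\modB$ generally have no preimages under $\varphi$ at all; nothing in the mere measure-preserving Boolean structure of $\varphi$ lets you pull such a splitting back to $\modA$. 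Indeed, for arbitrary measure-preserving Boolean embeddings (between models of different completions) atoms can perfectly well map to non-atomic elements, so some additional input is genuinely needed here.

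The fix is exactly the one the paper uses and is available to you for free: in the definition of the SB-property the embeddings are \emph{elementary}, and ``being an atom of measure $r$'' is an elementary (definable) condition in probability algebras, hence preserved by $\varphi$ and $\psi$. With that substitution, your bookkeeping goes through: since each positive value $t_n$ occurs with finite multiplicity, $\varphi$ matches the atoms bijectively, hence identifies the atomic parts and carries the atomless part of $\modA$ into that of $\modB$ (rescaling by $1/c$), giving bi-embeddability of the atomless parts and then $\modA\cong\modB$ by Theorem \ref{AprSB-prop}. If you insist on your purely combinatorial route for non-elementary measure-preserving embeddings, you would have to prove atom-preservation honestly (e.g.\ by showing that an atom of the target cannot lie below the image of an atomless piece, via the trace ultrafilter and a halving argument, plus a counting argument on the atoms), which is substantially more work than the elementarity observation. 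Your alternative suggestion via the atomic version of Maharam's theorem is also a legitimate route, essentially equivalent to the paper's argument.
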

\begin{proof}
Let $T$ be a completion of $Pr$ and let $\mathcal{A},\mathcal{B}\models T$ such that there exists elementary embeddings  $\varphi\colon \mathcal{A}\to\ \mathcal{B}$ and $\psi\colon \mathcal{B} \to \mathcal{A}$. By Fact \ref{CompletionsPr} and Theorem \ref{AprSB-prop} we may assume that $\modA$ and $\modB$  are not atomless. So, without loss of generality, by Fact \ref{MaharamMod}, $\modA$ is determined by the set $\{t_i\}_{i=1}^\infty$, where $1\geq t_1\geq t_2\geq...>0$, for it atomic part, and its Maharam invariants $(\modK^\modA,\Psi^\modA)$ for it atomless part. Analogously, $\modB$ is determined by the set $\{\tau_i\}_{i=1}^\infty$, where $1\geq \tau_1\geq \tau_2\geq...>0$, for it atomic part, and its Maharam invariants $(\modK^\modB,\Psi^\modB)$ for it atomless part.

Since being an atom is an elementary property, the collection of atoms of size $r>0$ in $\modA$ go through the elementary map $\varphi$ to the collection of atoms of size $r>0$ in $\modB$, so we would have that $\lbrace t_i\rbrace_{i=1}^\infty\subseteq\lbrace \tau_i\rbrace_{i=1}^\infty$, and similarly the collection of atoms of size $s>0$ in $\modB$ go through the elementary map $\psi$ to the collection of atoms of size $s>0$ in $\modA$, so $\{\tau_i\}_{i=1}^\infty\subseteq\{t_i\}_{i=1}^\infty$, thus $\lbrace t_i\rbrace_{i=1}^\infty=\lbrace \tau_i\rbrace_{i=1}^\infty$. Also, by Theorem \ref{AprSB-prop} we know that $(\modK^\modA,\Psi^A)=(\modK^\modB,\Psi^B)$, then by Theorem \ref{MaharamMod} we have that $\modA\cong\modB.$

\end{proof}
Now we will deal with atomless probability algebras expanded with a generic automorphism, see \cite{BenYac-Berens-2008perturbations}. Recall that an automorphism $T\colon \modA\to \modA$ is generic if and only if for every $\epsilon>0$ and every $n\geq 1$ there is $c\in \modA$ such that $c,T(c),\dots, T^{n-1}(c)$ are disjoint and $\mu(c\cup T(c)\cup \dots \cup T^{n-1}(c))\geq 1-\epsilon$. We call this theory $APrA$. Details about this theory can be found in \cite[section 18]{ContModelTheory} and in \cite{berenstein2004model}. We will use in the rest of this section the fact that $APrA$ has quantifier elimination.

Sometimes it is convenient to work with probability spaces instead of probability algebras. If $(\Omega,\A,\mu)$ is a probability space with associated probability algebra $\hat \A=\modA$ and $T'\colon \Omega\to \Omega$ is an invertible measure preserving transformation such that the induced map $\hat{T'}\colon \modA\to\modA$ coincides with $T$, then $T$ is generic if and only if $T'$ is aperiodic, that is, for every $n\geq 1$, $\mu(\{\omega\in \Omega\ |\  {T'}^n(\omega)=\omega\})=0$. Aperiodic maps include, among other, ergodic maps. 
\\
\\
Let us recall some terminology from \cite{Wal}.

\begin{defn}[Definition 2.6 in \cite{Wal}]
    Suppose $(\Omega_1,\A_1,\mu_1)$ and $(\Omega_2,\A_2,\mu_2)$ are probability spaces and $T_1: \Omega_1\to\Omega_1$ and $T_2:\Omega_2\to\Omega_2$ are invertible measure preserving maps. We say that $T_2$ is a \textit{factor} of $T_1$ if there exist $B_i\in\A_1$ with $\mu_i(B_i)=1$ and $T_i(B_i)\subseteq B_i$ and there exist a measure preserving transformation $\phi:B_1\to B_2$ with $\phi T_1(\omega)=T_2\phi(\omega)$, for all $\omega\in B_1$.
\end{defn}

\begin{obs}\label{factorembe}
    In the definition above, let $\modA_1$ and $\modA_2$ be the associated measure algebras of the probability spaces and $\hat{T_1}$, $\hat{T_2}$ are the associated automorphisms. Then, if $T_2$ is a factor of $T_1$, we obtain by quantifier elimination in $APrA$ that $(\modA_1,\hat{T}_1)$ is elementary embeddable in $(\modA_2,\hat{T}_2)$.
\end{obs}

We will also need the following notion introduced by Y. Sinai:

\begin{defn}[Definition 2.6 in \cite{Wal}]
    Suppose $(\Omega_1,\A_1,\mu_1)$ and $(\Omega_2,\A_2,\mu_2)$ are probability spaces and $T_1: \Omega_1\to\Omega_1$ and $T_2:\Omega_2\to\Omega_2$ are invertible measure preserving maps. We say that $T_1$ and $T_2$ are \emph{weakly isomorphic} if $T_2$ is a factor of $T_1$ and $T_1$ is a \textit{factor} of $T_2$.
\end{defn}

Following the notation in observation \ref{factorembe}, if $T_1$ and $T_2$ are weakly isomorphic then the
structures $(\modA_1,\hat{T}_1)$ and $(\modA_2,\hat{T}_2)$ are elementary bi-embeddable. So, the SB-property in $APrA$ corresponds to weak isomorphism agreeing with isomorphim for aperiodic maps. This question is now well understood in ergodic theory: 

\begin{fact}\label{F:Rudolph}
    The theory $APrA$ does not have the SB-property for separable models.
\end{fact}

\begin{proof}
By \cite[Theorem 4.32 part v.]{Wal} (and for the proof see the original papers \cite{Polit, Rudolph}) weak isomorphism does not agree with isomorphism. The maps involved are Kolmogorov automorphims on standard Lebesgue spaces, which are ergodic and thus aperiodic.
\end{proof}

We will now study the SB-property up to perturbations. In order to do so, we need to measure the distance between two measure preserving transformations. For $(\modA,T), (\modA,S)\models APrA$, we define $$d(T,S)=\sup_{a\in \modA}\mu\{T(a)\triangle S(a)\}$$
 See \cite[pp. 69-73]{halmos1976measure} for basic properties of this metric.

\begin{defn}\label{isouppert} Let $(\modA,S), (\modB,T)\models APrA$. We say that $(\modA,S)$ and $(\modB,T)$ are \textit{approximately isomorphic} if there exists a sequence of probability algebras isomorphisms $(\varphi_n)_{n<\omega}$ from $\modA$ to $\modB$ such that for every $\epsilon>0$ there exists $N_\epsilon<\omega$ such that for every $n\geq N_\epsilon$, $d(T,\phi_n^{-1}S\phi_n)<\epsilon$. Using the terminology from perturbations from \cite{yaacov2008perturbations}, we may also say that the structures $(\modA,S)$ and $(\modB,T)$ are \textit{isomorphic up to perturbations of the automorphism.} \end{defn}

Before proving the main result, we will need a basic fact:

 \begin{lem}\label{ideals}
Let $b\in \modA$ with $\mu(b)>0$ and $T(b)=b$. Then, after rescaling the measure of the space, $T$ restricted to the ideal $\hat \A \!\!\upharpoonright_b=\{a\in \hat \A\ |\ a\subseteq b\}$ is again generic, i.e. for any integer $n\geq 1$ and $\epsilon>0$ there is $c_b\in \modA\!\!\upharpoonright_b$ such that $c_b,T(c_b),\dots, T^{n-1}(c_b)$ are disjoint and $\mu(c_b\cup T(c_b)\cup \dots \cup T^{n-1}(c_b))\geq \mu(b)-\epsilon$.
\end{lem}

\begin{proof}

Since $T(b)=b$, the map $T$ acts on the ideal $\modA \!\!\upharpoonright_b$. Let $n\geq 1$ be an integer and let $\epsilon>0$. Since $T$ is generic, there is $c\in \modA$ such that $c,T(c),\dots, T^{n-1}(c)$ are disjoint and $\mu(c\cup T(c)\cup \dots \cup T^{n-1}(c))\geq 1-\epsilon$. Let $d=\textbf{1}\setminus (c\cup T(c)\cup \dots \cup T^{n-1}(c))$, so $\mu(d)<\epsilon$
and the set $\{d,c,T(c),\dots, T^{n-1}(c)\}$ forms a measurable partition of $\textbf{1}$. Now let $c_b=c\cap b$, note that $T^i(c_b)=T^i(c)\cap b\in \modA \!\!\upharpoonright_b$ and that $\{d\cap b,c_b,T(c_b),\dots, T^{n-1}(c_b)\}$ forms a measurable partition of $b$.
Also note that $\mu(d\cap b)\leq \mu(d)<\epsilon$, so $c_b$ is the desired witness for the property. \end{proof}

\begin{teo}\label{SB-APrA}
Let $(\modA,T)$ and $(\modB,S)\models APrA$ be such that $(\modA,T)$ and $(\modB,S)$ are elementary bi-embeddable. Then $(\modA,T)$ and $(\modB,S)$ are approximately isomorphic.
 \end{teo}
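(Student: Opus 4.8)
My plan is to reduce the statement to the claim that any two generic automorphisms of a single atomless probability algebra are approximately conjugate, and then to prove that claim by a Rokhlin tower argument, handling non-separability by passing to the homogeneous Maharam components.

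\textbf{Step 1 (reduce to one algebra).} First I would observe that the bi-embeddability hypothesis is used only to force $\modA\cong\modB$ as probability algebras: the given $APrA$-embeddings are in particular probability algebra embeddings, so $\modA$ and $\modB$ are bi-embeddable atomless probability algebras, and Theorem \ref{AprSB-prop} supplies a probability algebra isomorphism $\rho\colon\modB\to\modA$. Transporting $S$ along $\rho$, i.e. setting $S'=\rho S\rho^{-1}\in\mathrm{Aut}(\modA)$, one checks that $(\modA,T)$ and $(\modB,S)$ are approximately isomorphic if and only if $(\modA,T)$ and $(\modA,S')$ are. So it suffices to show: for any two generic automorphisms $T,S'$ of an atomless probability algebra $\modA$ and every $\varepsilon>0$ there is $\theta\in\mathrm{Aut}(\modA)$ with $d(\theta^{-1}T\theta,S')<\varepsilon$ (conjugation by a fixed automorphism is a $d$-isometry, so this symmetric formulation indeed yields an approximate isomorphism).

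\textbf{Step 2 (reduce to the homogeneous case).} By Maharam's theorem write $\modA=\bigoplus_{i\in I}\hat b_i$ with $I$ countable, where $\hat b_i$ is the \emph{maximal} homogeneous element of its Maharam type $\kappa_i$. Since a union of two disjoint homogeneous elements of type $\kappa$ is again homogeneous of type $\kappa$, such a maximal element is unique, hence fixed by every automorphism (which preserves Maharam type and measure). Thus $T$ and $S'$ fix each $\hat b_i$, so by Lemma \ref{ideals} their restrictions to the ideal $I_{\hat b_i}$ are again generic after rescaling. If for each $i$ one finds $\theta_i\in\mathrm{Aut}(I_{\hat b_i})$ with $d(\theta_i^{-1}(T{\restriction})\theta_i,\,S'{\restriction})<\varepsilon$ in the rescaled algebra, then $\theta=\bigoplus_i\theta_i$ works on $\modA$: since $T$ and $S'$ respect the decomposition, the symmetric differences split over the summands and the contributions are weighted by the $\mu(\hat b_i)$, which sum to $1$. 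So we may assume $\modA$ is homogeneous.

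\textbf{Step 3 (Rokhlin towers).} Fix $\varepsilon>0$ and $n$ with $2/n<\varepsilon/2$. Genericity of $T$ gives $c$ with $c,Tc,\dots,T^{n-1}c$ disjoint and $\mu(\bigcup_{j<n}T^jc)>1-\varepsilon/8$, and genericity of $S'$ gives such a $c'$; shrinking the larger of $c,c'$ (possible since $\modA$ is atomless) we may take $\mu(c)=\mu(c')$, so the two tower complements $E_T,E_{S'}$ also have equal measure and each tower still has measure $>1-\varepsilon/8$. As $\modA$ is homogeneous, equal-measure elements generate isomorphic ideals; choose isomorphisms $\theta_0\colon I_{c'}\to I_c$ and $I_{E_{S'}}\to I_{E_T}$, and define $\theta\in\mathrm{Aut}(\modA)$ by $\theta{\restriction}_{I_{(S')^jc'}}=T^j\theta_0(S')^{-j}$ for $j<n$ and by the chosen map on $I_{E_{S'}}$. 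A short computation shows $\theta^{-1}T\theta(a)=S'(a)$ for every $a$ below $\bigcup_{j<n-1}(S')^jc'$, an element of measure $>1-\varepsilon/8-1/n$; splitting an arbitrary $a$ along this element and using that $\theta^{-1}T\theta$ and $S'$ preserve measure then gives $d(\theta^{-1}T\theta,S')\le 2(\varepsilon/8+1/n)<\varepsilon$, which is what we wanted.

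\textbf{Main obstacle.} The crux is Step 3: turning the genericity axiom into a concrete approximate conjugacy. The essential point is that the Rokhlin tower lets $\theta^{-1}T\theta$ agree \emph{exactly} with $S'$ on a large ideal; mere $d$-smallness of $T$ against $S'$ on some generating family would not control $d$, since $d$ is a supremum over all elements. Step 2 is what extends the argument past the separable case; for separable $\modA$, Step 3 alone reproves that $APrA$ is $\aleph_0$-categorical up to perturbations.
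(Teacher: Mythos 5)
Your proof is correct and follows essentially the same route as the paper: reduce to a single algebra via the SB-property for $APr$, split into the maximal homogeneous Maharam components (fixed by the automorphisms, with Lemma \ref{ideals} giving genericity of the restrictions), and on each component build a piecewise conjugating automorphism from matched Rokhlin towers using strong homogeneity. The only differences are bookkeeping ones — you transport $S$ to $\modA$ and use a uniform $\varepsilon$ per rescaled component where the paper uses errors $\epsilon/2^i$, and you map the $S$-tower onto the $T$-tower rather than the reverse.
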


\begin{proof}
Let $(\modA,T), (\modB,S)\models APrA$ and assume there are elementary embeddings $\varphi\colon (\modA,T) \to (\modB,S)$, $\psi\colon  (\modB,S) \to (\modA,T)$. By the previous theorem, the structures $\modA$ and $\modB$
are isomorphic and without loss of generality, we may assume $\mathcal{A}=\mathcal{B}$. Let $(\modK,\Psi)$ be the Maharam invariants and this structure and for each $\kappa_i\in \modK$, let $b_i$ be a maximal homogeneous element of density character $\kappa_i$, so $\mu(b_i)=\alpha_i>0$.

Since $T$ is an isomorphism, it has to send $b_i$ to another maximal homogeneous element of density character $\kappa_i$, so $T(b_i)=b_i$. Thus we may decompose $T$ as a direct sum of the family of restrictions $T\upharpoonrightleft \!\! I_{b_i}$ on the corresponding homogeneous ideals. 

By Lemma \ref{ideals} for each $i$, $T_i=T\upharpoonrightleft \!\! I_{b_i}\to I_{b_i}$ is again generic. Let $\epsilon>0$ and let $n\geq 1$. Then there is $c_i\in  I_{b_i}$ such that $c_i,T(c_i),\dots, T^{n-1}(c_i)$ are disjoint and $\mu(c_i\cup T(c_i)\cup \dots \cup T^{n-1}(c_i))\geq \alpha_i-\dfrac{\epsilon}{2^i}$. Note that since $c_i\subseteq b_i$, $c_i$ is homogeneous of density character $\kappa_i$. Similarly, there is $c_i'\in  I_{b_i}$ such that $c_i',S(c_i'),\dots, S^{n-1}(c_i')$ are disjoint and $\mu(c_i'\cup S(c_i')\cup \dots \cup S^{n-1}(c_i'))\geq \alpha_i-\dfrac{\epsilon}{2^i}$. As before, $c_i'$ is homogeneous of density character $\kappa_i$. By choosing 
$\min\{\mu(c_i),\mu(c_i')\}$ and taking a subset of one of the two sets if necessary, we may assume that $\mu(c_i)=\mu(c_i')$. 

We now define an automorphism $\phi$ of $I_{b_i}$. We follow a standard argument about almost conjugacy of two aperiodic maps (see for example the section on uniform approximation in \cite{halmos2017lectures}) known to hold on separable models.
By \cite[Proposition 3.2]{song2016saturated} the measure algebra $I_{b_i}$ is strongly $\kappa_i$-homogeneous. We first construct $n+1$ automorphisms  $\{\phi_j\colon I_{b_i}\to I_{b_i}:0\leq j\leq n\}$. Since $\mu(c_i)=\mu(c_i')$ there is $\phi_0\colon I_{b_i}\to I_{b_i}$ such that $\phi_0(c_i)=c_i'$. Now for $0<j< n$, let $\phi_j\colon I_{b_i}\to I_{b_i}$ be defined by $\phi_j(a)=S^{j}\left(\ \phi_0\left(T^{-j}(a)\right)\ \right)$. Note that for each $j< n$, $\phi_j(T^{j}(c_i))=S^{j}(c_i')$. Since $\mu(c_i)=\mu(c_i')$ we now get 
$$\mu(c_i\cup T(c_i)\cup \dots \cup T^{n-1}(c_i))=\mu(c_i'\cup S(c_i')\cup \dots \cup S^{n-1}(c_i')).$$ 
Again using \cite[Proposition 3.2]{song2016saturated} there is an automorphisms $\phi_n\colon I_{b_i}\to I_{b_i}$ that sends $\displaystyle\bigcup_{j\leq n-1}T^j(c_i)$ to $\displaystyle\bigcup_{j\leq n-1} S^j(c_i')$ and thus it also sends $b_i\setminus \displaystyle\bigcup_{j\leq n-1}T^j(c_i)$ to $b_i\setminus \displaystyle\bigcup_{j\leq n-1} S^j(c_i')$. 
\\
\\
We define $\phi\colon I_{b_i}\to I_{b_i}$ "piecewise" using the family $\{\phi_{j}\}_{j=0}^n$. If $a\subseteq \displaystyle\bigcup_{j\leq n-1} T^j(c_i)$, then $\phi(a)=\displaystyle\bigcup_{j\leq n-1} \phi_j(a\cap T^{j-1}(c_j))$, and if $a\subseteq \left(b_i\setminus \displaystyle\bigcup_{j\leq n-1}T^j(c_i)\right)$, then $\phi(a)=\phi_n(a)$. And in general, for $a\subseteq b_i$, $$\phi(a)=\phi\left(a\cap \displaystyle\bigcup_{j\leq n-1} T^j(c_i)\right)\cup \phi\left(a\cap \left(b_i \setminus(\displaystyle\bigcup_{j\leq n-1} T^j(c_i))\right) \right).$$

Note that for $a\subseteq \displaystyle\bigcup_{j\leq n-2}T^j(c_i)$ we have that $\phi T(a)=S \phi (a)$ and thus $$d(\phi T,S\phi)\leq  \dfrac{\alpha_i}{n}+ \dfrac{\epsilon}{2^i}.$$

We put together the maps defined for all indexes $i$ and build $\phi\colon \modA \to \modB $ (that only depends on $n$ and $\epsilon$) by taking the direct sum of the maps $\phi$ defined on each $I_{b_i}$. Then 
$d(\phi T \phi^{-1},S)=d(\phi T,S\phi)\leq  \displaystyle\sum_{1\leq i}\left(\frac{\alpha_i}{n}+ \frac{\epsilon}{2^i}\right)=\frac{1}{n}+ \epsilon$.
Since we can choose $n$ and $\epsilon $ arbitrarily, then $(\modA,T)$ and $(\modB,S)$ are isomorphic up to perturbations of the automorphism.

\end{proof}

In the language of perturbations, Theorem \ref{SB-APrA} shows that $APrA$ has the SB-property up to perturbations of the automorphism. As mentioned in the introduction the theory $APrA$ is stable, not superstable and $\aleph_0$-stable up to perturbations (see [10]). By Fact \ref{superstable-SB} and its continuous counterpart Theorem \ref{TeoSuperstableSB} it does not have the SB-property (by Fact \ref{F:Rudolph} the $SB$-property even fails for separable models). This seems to point out an interesting fact around the stability spectrum when we see it up to perturbation, we may recover the desired properties (in this case isomorphism) up to perturbation. 

\begin{preg}
  Assume $T$ is $\aleph_0$-stable up to perturbations of some predicates and functions. Also assume $T$ is non-multidimensional. Does $T$ have the $SB$-property up to perturbations?
\end{preg}

\section{Randomizations.}\label{sec_Rand}
In this section we will study the SB-property in Randomizations. We will show that a first order theory $T$ with $\leq\omega$ countable models has the SB-property for countable models if and only if $T^R$ has the SB-property for separable models. We also use randomizations to construct a continuous complete without the SB-property. We assume the reader is familiar with the model theory of 
randomizations, see  \cite[Section 5]{ andrews2019independence}, \cite{yaacov2013theories} and \cite{ben2009randomizations}. In any case, we recall some basic facts and definitions.
\\
\\
Given a first order language $\modL$, define $\modL^R$, the associated \textit{randomization language}, which is a continuous language of two sorts  $(\textbf{K},\B)$, where $\textbf{K}$ is a sort of random variables an $\B$ is a sort of events. The language consists of a function $\llbracket \varphi(\cdot)\rrbracket \colon \textbf{K}^n\longrightarrow \B$ for each formula  $\varphi$ in $\modL$ with $n$ variables, and Boolean operations $\top$, $\bot$, $\sqcup$, $\sqcap$,$\neg$ in sort $\B$ and an unary predicate $\mu\colon  \B\longrightarrow [0,1].$ We will use letters $f,g,h,...$ for elements of the sort $\textbf{K}$ and letters $A,B,C,...$ for elements inside the sort $ \B.$
\begin{defn}[Definition 5.1.1. in \cite{andrews2019independence}]
Let $\modM$ be a first order $\modL$-structure, define a \textit{randomization of $M$}, as a pre-structure $(\textbf{K},\B)$ on the language $\modL^R$ such that:
\begin{itemize}
    \item[1.] $(\Omega,\B,\mu)$ is a probability space, with $\sigma$-algebra $\B$ and measure $\mu$. 
    \item[2.] $\textbf{K}$ is a set of functions $f \colon \Omega\longrightarrow \modM$. Some times $\textbf{K}$ could be denoted by $\textbf{K}_\modM$ or $\modM^R.$ 
    \item[3.] For each $\modL-$formula $\varphi(x)$ and each $n-$tuple $\bar{f}\in \textbf{K}^n$
     $$\llbracket\varphi(\bar{f})\rrbracket=\lbrace\  t\in\Omega\ |\ \modM\models\varphi\left(\bar{f}(t)\right)\ \rbrace.  $$
     \item[4.]For each $B\in\B$ and all $\epsilon>0$ there exists $f,g\in\textbf{K}$ such that 
     $$\mu(\llbracket f=g\rrbracket\triangle B)<\epsilon.$$
     i.e, any event can be approximated with an equality of two functions.
     \item[5.] For all $\modL-$formulas $\varphi(x,y)$, all $\epsilon>0$ and all $\overline{g}\in\textbf{K}^n$ there exists $f\in\textbf{K}$ such that $$\mu(\llbracket \varphi(f,\overline{g})\rrbracket\triangle \llbracket\exists x\varphi(x,\overline{g})\rrbracket)<\epsilon$$
     i.e, we have existence of approximate witnesses\hspace{0.1cm} for existentials on $\modL-$formulas.
     \item[6.] For all $f,g\in\textbf{K}$ define $d_\textbf{K}(f,g)=\mu(\llbracket f\neq g\rrbracket).$
     \item[7.] For all $A,B\in\B$ define $d_\B(A,B)=\mu(A\triangle B).$
\end{itemize}
Here $d_\textbf{K}$ and $d_\B$ are pseudo-metrics. We can take a quotient identifying elements that are at distance zero from each other for obtain a metric space. Since every metric space has a complete extension, from the quotient we can obtain a complete metric structure associated to $(\textbf{K},\B)$ called the \textit{completion of} $(\textbf{K},\B)$.
\end{defn}

\begin{defn}[Definition 5.1.2. in \cite{andrews2019independence}]
For each first order theory $T$, the \textit{randomized theory} $T^R$ is the set  of sentences, in the continuous sense, that are true in all the randomizations of  models of $T$.
\end{defn}
Let $\modB$ be the set of Borel sets in $[0,1)$ and let $([0,1),\modB,\lambda)$ be the usual probability space where $\lambda$ is the restriction of Lebesgue measure to $\modB$. For a first order theory $T$ and $\modM\models T$ define:
$$\modM^{[0,1)}=\lbrace\ f\colon [0,1)\rightarrow M\ |\ \left|\text{Im}(f)\right|\leq\aleph_0\text{ and } f^{-1}[m]\in\modB,\text{ for all } m\in M\ \rbrace.$$
\begin{defn}
\textit{(Definition 2.1 in \cite{andrews2015separable})} The \textit{Borel randomization of $\modM$} is the pre-structure $(\modM^{[0,1)},\modB)$ for $\modL^R$ whose universes for the sorts $\textbf{K}$ and $\B$ are $\modM^{[0,1)}$ and $\modB$ respectively, whose measure $\mu$ is given by $\mu(B)=\lambda(B)$ for all $B\in\modB$ and for each $n$-ary $\modL$-formula $\varphi$, the function $\llbracket\varphi(\cdot)\rrbracket \colon \textbf{K}^n\rightarrow\B$ is interpreted as:
$$\llbracket\varphi\left( \bar{f} \right)\rrbracket=\lbrace\ t\in[0,1)\ |\ \modM\models\varphi\left( \bar{f}(t) \right)\ \rbrace $$
and its distance predicates are defined by $$d_\B(B,C)=\mu(B\triangle C),\ d_\textbf{K}=\mu(\llbracket f\neq g\rrbracket), $$
where $\triangle$ is the symmetric difference operation. 
\end{defn}
We now lists some facts that we will need later in this paper to prove Theorem \ref{GranTeorema} and Corollary \ref{Cor:SBrand}. The
presentation here is largely taken from \cite{andrews2015separable} and \cite{ben2009randomizations}.

\begin{fact}\label{ContableTSepTR}
\textit{(Lemma 4.2 in \cite{andrews2015separable})} A model $\modM$ of $T$ is countable if and only if the Borel randomization $(\modM^{[0,1)},\modB)$ is separable.
\end{fact}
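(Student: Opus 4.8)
The statement to prove is Fact \ref{ContableTSepTR}: a model $\modM$ of $T$ is countable if and only if the Borel randomization $(\modM^{[0,1)},\modB)$ is separable. Let me sketch a proof plan.

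\bigskip

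The plan is to prove both directions directly by analyzing what density means for the two sorts $\textbf{K}$ and $\B$ under the pseudometrics $d_\textbf{K}(f,g)=\lambda(\llbracket f\neq g\rrbracket)$ and $d_\B(B,C)=\lambda(B\triangle C)$. Recall that the Borel $\sigma$-algebra $\modB$ on $[0,1)$ with Lebesgue measure is separable in the measure-algebra metric $d_\B$ (this is a standard fact: finite unions of intervals with rational endpoints are dense), so the sort $\B$ is always separable and plays no role; everything hinges on the sort $\textbf{K}=\modM^{[0,1)}$.

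\bigskip

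For the direction ``$\modM$ countable $\Rightarrow$ randomization separable'': assume $M=\{m_k : k<\omega\}$. I would produce a countable dense subset of $\modM^{[0,1)}$ consisting of the simple functions $f:[0,1)\to M$ that take finitely many values, each value in $M$, with each preimage $f^{-1}[m]$ a finite union of rational-endpoint subintervals of $[0,1)$. There are only countably many such functions. Given an arbitrary $f\in\modM^{[0,1)}$ and $\epsilon>0$, since $f$ has countable image $\{n_0,n_1,\dots\}\subseteq M$ and each $f^{-1}[n_j]\in\modB$, choose $J$ large enough that $\lambda\bigl(\bigcup_{j\le J}f^{-1}[n_j]\bigr)>1-\epsilon/2$, then approximate each of the finitely many Borel sets $f^{-1}[n_0],\dots,f^{-1}[n_J]$ up to total error $\epsilon/2$ by disjoint finite unions of rational intervals (regularity of Lebesgue measure), and define a simple function $g$ accordingly, sending the leftover set to $n_0$ say. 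Then $\lambda(\llbracket f\neq g\rrbracket)<\epsilon$, so $d_\textbf{K}(f,g)<\epsilon$. Hence $\textbf{K}$ is separable, and so is the whole two-sorted structure.

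\bigskip

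For the converse ``randomization separable $\Rightarrow$ $\modM$ countable'': I would argue contrapositively. Suppose $M$ is uncountable. For each $m\in M$ let $c_m\in\modM^{[0,1)}$ be the constant function with value $m$. If $m\neq m'$ then $\llbracket c_m\neq c_{m'}\rrbracket=[0,1)$, so $d_\textbf{K}(c_m,c_{m'})=1$. Thus $\{c_m : m\in M\}$ is an uncountable $1$-separated subset of $\textbf{K}$, which forces $\textbf{K}$ — hence the randomization — to be non-separable. The main (minor) obstacle is just being careful with the measure-theoretic approximation step in the first direction — ensuring the approximating simple function genuinely lies in $\modM^{[0,1)}$ (finitely many values, Borel preimages, which is automatic for finite unions of intervals) and bounding the symmetric-difference measure correctly — but this is routine regularity of Lebesgue measure. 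No real difficulty is expected.
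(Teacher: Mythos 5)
The paper does not prove this statement at all: it is quoted as a Fact directly from Lemma 4.2 of the cited Andrews--Keisler paper, so there is no internal proof to compare against. Your argument is correct and is essentially the standard proof from that reference: when $M$ is countable, the simple functions with values in $M$ and preimages given by finite unions of rational-endpoint intervals form a countable dense subset of $\modM^{[0,1)}$ under $d_\textbf{K}$ (and the event sort $\B$ is always separable, as you note), while if $M$ is uncountable the constant functions $c_m$ form an uncountable $1$-separated family, ruling out separability. The only points needing (routine) care --- disjointifying the approximating rational-interval unions so that the resulting $g$ really lies in $\modM^{[0,1)}$, and observing that the $1$-separation survives the quotient and completion of the pre-structure --- are exactly the ones you flag, and they go through without difficulty.
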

\begin{defn}
\textit{(Definition 4.1 in \cite{andrews2015separable})} A pre-model of $T^R$ is called \textit{strongly separable} if is elementarily embeddable in $(\modM^{[0,1)},\modB)$ for some countable model $\modM$ of $T$. 
\end{defn}
\begin{fact}
\textit{(Proposition 4.3 in \cite{andrews2015separable})} A pre-model of $T^R$ is strongly separable if and only if is separable and elementarily embeddable in the Borel randomization of some model of $T$.
\end{fact}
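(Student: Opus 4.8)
The plan is to establish the two implications separately. The forward direction is immediate from Fact~\ref{ContableTSepTR}; the converse is the substantive part, obtained by a downward Löwenheim--Skolem argument together with the fact that the Borel randomization construction preserves elementary embeddings. First, suppose a pre-model $\modN$ of $T^R$ is strongly separable, and fix a countable $\modM\models T$ together with an elementary embedding $\modN\to(\modM^{[0,1)},\modB)$. A countable model is in particular a model of $T$, so $\modN$ is elementarily embeddable in the Borel randomization of a model of $T$; and by Fact~\ref{ContableTSepTR} the structure $(\modM^{[0,1)},\modB)$ is separable, so $\modN$, which embeds into a separable metric structure, is itself separable. This settles one direction.

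For the converse, assume $\modN$ is separable and fix an elementary embedding of $\modN$ into $(\modM^{[0,1)},\modB)$ for some, possibly uncountable, $\modM\models T$ (the Borel randomization of any model of $T$ is a randomization in the sense of the definition above, hence a pre-model of $T^R$); identify $\modN$ with its image. Choose a countable set $\{f_i : i<\omega\}$ of elements of the sort $\textbf{K}$ that is dense in the $\textbf{K}$-part of $\modN$ (the sort $\B$ of every Borel randomization is the measure algebra of $[0,1)$, so it needs no separate treatment). Each $f_i$ is a Borel function $[0,1)\to\modM$ with at most countable image, so $S:=\bigcup_{i<\omega}\text{Im}(f_i)$ is a countable subset of $\modM$. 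By the downward Löwenheim--Skolem theorem there is a countable $\modM_0\preceq\modM$ with $S\subseteq\modM_0$; then every $f_i$ takes its values in $\modM_0$ and is still Borel measurable, so $f_i\in\modM_0^{[0,1)}$.

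The key ingredient is that $\modM_0\preceq\modM$ implies $(\modM_0^{[0,1)},\modB)\preceq(\modM^{[0,1)},\modB)$; that is, the Borel randomization construction sends elementary embeddings of $\modL$-structures to elementary embeddings of the associated randomizations (see \cite{ben2009randomizations} and \cite{andrews2015separable}). Granting this, $(\modM_0^{[0,1)},\modB)$ is an elementary, hence closed, substructure of $(\modM^{[0,1)},\modB)$ that contains the set $\{f_i\}$; since $\{f_i\}$ is dense in the (closed) $\textbf{K}$-part of $\modN$, it follows that $(\modM_0^{[0,1)},\modB)$ contains all of that part of $\modN$, and on the sort $\B$ the inclusion is trivial. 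Hence $\modN$ is a substructure of $(\modM_0^{[0,1)},\modB)$, and since both $\modN$ and $(\modM_0^{[0,1)},\modB)$ are elementary in $(\modM^{[0,1)},\modB)$ we conclude $\modN\preceq(\modM_0^{[0,1)},\modB)$. As $\modM_0$ is countable, this says precisely that $\modN$ is strongly separable. The only nontrivial step, and hence the main obstacle, is the preservation of elementary embeddings by the Borel randomization construction together with the bookkeeping that the smaller randomization really sits as a closed substructure of the larger one; once these are in hand the rest of the argument is routine.
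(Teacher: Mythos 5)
This statement is quoted in the paper as a Fact from \cite{andrews2015separable} (Proposition 4.3) and is not proved there, so there is no in-paper proof to compare against; your argument is correct in outline and is essentially the standard proof from the original source: the forward direction via Fact \ref{ContableTSepTR}, and the converse via downward L\"owenheim--Skolem applied to the union of the (countable) ranges of a countable dense subset of the $\textbf{K}$-sort, plus the fact that $\modM_0\preceq\modM$ yields $(\modM_0^{[0,1)},\modB)\preceq(\modM^{[0,1)},\modB)$ --- which is exactly the single-piece case of the product-randomization fact (Theorem 7.3 of \cite{andrews2015separable}) quoted later in this paper, so your ``key ingredient'' is legitimately available. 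One small imprecision: since $\modN$ is only a pre-model, its $\textbf{K}$-part need not be closed, and a limit of the $f_i$ need only take values in $M_0$ almost everywhere, so it lies in $\modM_0^{[0,1)}$ only after modification on a null set (equivalently, it lies literally in the product randomization $\prod\modM_0^{[0,1)}$, which is closed under a.e.\ modification); since all predicates are measures of events, this distance-zero adjustment is harmless, but it is the correct replacement for your ``elementary, hence closed'' step.
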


\begin{defn}
\textit{(Definition 7.1 in \cite{andrews2015separable})} Let $M\models T$, let $I$ be a finite or countable set of indexes, let $[0,1)=\displaystyle\bigcup_{i\in I}B_i$ be a partition of $[0,1)$ into Borel sets, and for each $i\in I$ let $\modM_i\ \sbestrucel\ \modM$. We define
$$\prod_{i\in I }\modM_i^{B_i}=\lbrace f\in\modM^{[0,1)}\ |\ (\forall i\in I)\ \lambda(\llbracket f\in  M_i\rrbracket\triangle B_i)=0\ \rbrace. $$
$\displaystyle\left(\prod_{i\in I}\modM_i^{B_i},\modB\right)$ is a pre-strcuture and $\displaystyle\left(\prod_{i\in I}\modM_i^{B_i},\modB\right)\subseteq (\modM^{[0,1)},\modB)$. We call $\displaystyle\left(\prod_{i\in I}\modM_i^{B_i},\modB\right)$ a \textit{product randomization in $\modM$}.
\end{defn}

\begin{obs}\hspace{0.01cm} 
\begin{itemize} 
    \item[i)] We may view $\displaystyle\left(\prod_{i\in I}\modM_i^{B_i},\modB\right)$ as the result of sampling from $\modM_i$ with probability $\lambda(B_i)$ for each $i \in I$.
    \item[ii)] We allow the possibility that some of the sets $B_i$ are empty.
\end{itemize}
\end{obs}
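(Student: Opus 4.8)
The content of the observation that genuinely calls for a proof is part (i); part (ii) is a bookkeeping convention that I will dispatch at the end. My plan is to turn the informal ``sampling'' description into a precise structural decomposition. I would fix $\modM\models T$, a finite or countable Borel partition $[0,1)=\bigsqcup_{i\in I}B_i$, and elementary submodels $\modM_i\sbestrucel\modM$. For each $i$ with $\lambda(B_i)>0$ I would introduce the \emph{fibre structure} $\modN_i$ over the probability space $(B_i,\modB\restriction B_i,\mu_i)$, where $\mu_i(C)=\lambda(C)/\lambda(B_i)$, whose random variables are the restrictions $g\restriction B_i$ of elements $g\in\prod_{j\in I}\modM_j^{B_j}$ and whose brackets $\llbracket\varphi(\bar g\restriction B_i)\rrbracket$ are computed pointwise in $\modM$; since a.e.\ $t\in B_i$ has $g(t)\in M_i$ and $\modM_i\sbestrucel\modM$, this computation agrees with the one made inside $\modM_i$.

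The first step is to verify that each $\modN_i$ satisfies the seven clauses defining a randomization of $\modM_i$. The probability-space and pointwise-definition clauses are immediate; the event-approximation clause and the approximate-witness clause are inherited from the corresponding clauses for $(\modM^{[0,1)},\modB)$: given a target in $\modB\restriction B_i$ (or an $\modL$-formula with parameters over $B_i$), produce the approximate witnesses inside $(\modM^{[0,1)},\modB)$ and then redefine them on the complementary blocks $B_j$ ($j\neq i$) by constant $M_j$-valued functions, so that the resulting functions still lie in $\prod_j\modM_j^{B_j}$ and restrict on $B_i$ to exactly the witnesses we need. Hence (the completion of) $\modN_i$ is a randomization of $\modM_i$, in particular a model of $\mathrm{Th}(\modM_i)^R$.

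The second step is the decomposition. For any $\modL$-formula $\varphi$ and any tuple $\bar g$ from $\prod_{i\in I}\modM_i^{B_i}$, partitioning $[0,1)$ into the blocks $B_i$ gives, up to a null set, $\llbracket\varphi(\bar g)\rrbracket=\bigsqcup_{i\in I}\llbracket\varphi(\bar g\restriction B_i)\rrbracket_{\modN_i}$, and therefore
$$\mu\bigl(\llbracket\varphi(\bar g)\rrbracket\bigr)=\sum_{i\in I}\lambda(B_i)\,\mu_i\bigl(\llbracket\varphi(\bar g\restriction B_i)\rrbracket_{\modN_i}\bigr).$$
This is precisely the assertion that $(\prod_{i\in I}\modM_i^{B_i},\modB)$ is obtained by first choosing an index $i$ with probability $\lambda(B_i)$ and then sampling according to the randomization $\modN_i$ of $\modM_i$; in measure-algebra terms it is the weighted direct sum $\bigoplus_{i\in I}\lambda(B_i)\cdot\modN_i$ of the fibre randomizations, which is the meaning of part (i). For part (ii): when $B_i=\emptyset$ the $i$-th defining clause only constrains $g$ on $B_i$ and is thus vacuous, the weight $\lambda(B_i)=0$ kills the $i$-th summand above, and the structure is literally $\prod_{j:\,B_j\neq\emptyset}\modM_j^{B_j}$, so empty blocks are harmless and the convention costs nothing.

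The only step with any real content is the verification of the event-approximation and approximate-witness clauses for the fibre structures $\modN_i$: one must check that witnesses extracted from $(\modM^{[0,1)},\modB)$ can always be re-assembled into genuine elements of $\prod_j\modM_j^{B_j}$ without ever needing, on a block $B_j$ with $j\neq i$, a witness that fails to be $M_j$-valued. Because those two clauses for $\modN_i$ only constrain the behaviour of functions on $B_i$, the ``fill in by constant $M_j$-valued functions on the remaining blocks'' device resolves this, and everything else is routine measure theory.
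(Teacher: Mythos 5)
The statement you are proving is presented in the paper as an unproved interpretive remark (a convention in the spirit of Andrews--Keisler), so there is no official proof to compare against; your decision to formalize ``sampling'' via fibre randomizations $\modN_i$ over $(B_i,\modB\restriction B_i,\mu_i)$ and the identity $\mu(\llbracket\varphi(\bar g)\rrbracket)=\sum_{i\in I}\lambda(B_i)\,\mu_i(\llbracket\varphi(\bar g\restriction B_i)\rrbracket)$ is a sensible way to give part (i) mathematical content, and your disposal of part (ii) is fine.

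There is, however, a flaw in exactly the step you single out as having real content. To verify the event-approximation and approximate-witness clauses for $\modN_i$ you extract witnesses from $(\modM^{[0,1)},\modB)$ and then correct them only on the blocks $B_j$ with $j\neq i$. But a witness produced in the Borel randomization of $\modM$ is merely $M$-valued, and on a non-null subset of $B_i$ it may take values outside $M_i$; after your correction the function can still fail $\lambda(\llbracket f\in M_i\rrbracket\triangle B_i)=0$, so it need not lie in $\prod_j\modM_j^{B_j}$, and its restriction to $B_i$ is then not an element of the fibre you are trying to verify. The repair is to build witnesses directly inside $M_i$ rather than importing them: the parameters $\bar g\restriction B_i$ have countable range, so partition $B_i$ into countably many Borel pieces on which $\bar g$ is constant with values in $M_i$; on each piece where $\modM_i\models\exists x\,\varphi(x,\bar m)$ (equivalently $\modM\models\exists x\,\varphi(x,\bar m)$, by $\modM_i\sbestrucel\modM$) choose a witness in $M_i$ and use it as the constant value there, and choose an arbitrary element of $M_i$ elsewhere; this gives an exact, not merely approximate, witness. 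Clause 4 is handled the same way, by taking two distinct elements $a,b\in M_i$ and representing a given event $C\subseteq B_i$ as $\llbracket f=g\rrbracket$ with $f\equiv a$ and $g$ equal to $a$ on $C$ and to $b$ on $B_i\setminus C$. With these replacements (elementarity being what guarantees that brackets and existentials computed in $\modM$ and in $\modM_i$ agree), the rest of your argument goes through.
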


\begin{fact}
\textit{(Theorem 7.3 in \cite{andrews2015separable})} Every product randomization in $\modM$ is a pre-complete elementary sub-structure of the Borel randomization $(\modM^{[0,1)},\modB)$.
\end{fact}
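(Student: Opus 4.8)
The plan is to check three things: that $\modN:=\left(\prod_{i\in I}\modM_i^{B_i},\modB\right)$ is a sub-pre-structure of the Borel randomization $\modN':=(\modM^{[0,1)},\modB)$, that it is a pre-complete randomization of $\modM$ (which includes verifying the axioms of Definition 5.1.1), and that the inclusion $\modN\subseteq\modN'$ is elementary. The first point is immediate: the event sorts literally coincide, and for any tuple $\bar f$ drawn from $\prod_{i}\modM_i^{B_i}\subseteq\modM^{[0,1)}$ and any $\modL$-formula $\varphi$ the event $\llbracket\varphi(\bar f)\rrbracket$ already lies in $\modB$, so there is nothing to close off.

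For the randomization axioms we may assume $\lvert M\rvert\ge 2$ (if $\lvert M\rvert\le 1$ everything is trivial, and if $\lvert M\rvert$ is finite then $\modM_i=\modM$ for all $i$ and $\modN=\modN'$). For condition~4, given $B\in\modB$ pick for each $i$ distinct $a_i,b_i\in M_i$, put $f\equiv a_i$ on $B_i$, and let $g$ agree with $f$ on $B\cap B_i$ and equal $b_i$ on $B_i\setminus B$; then $f,g\in\prod_{i}\modM_i^{B_i}$ and $\llbracket f=g\rrbracket=B$ exactly. For condition~5, given $\varphi(x,\bar y)$ and $\bar g$ from $\prod_{i}\modM_i^{B_i}$, on each $B_i$ the tuple $\bar g$ takes values in $M_i^n$ off a null set; since $\modM_i\preccurlyeq\modM$, wherever $\modM\models\exists x\,\varphi(x,\bar g(t))$ with $t\in B_i$ there is a witness in $M_i$, and as $\bar g$ has countable image only countably many parameter tuples occur, so a single measurable selection yields $f\in\prod_{i}\modM_i^{B_i}$ with $\llbracket\varphi(f,\bar g)\rrbracket=\llbracket\exists x\,\varphi(x,\bar g)\rrbracket$ up to a null set. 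Pre-completeness is softer still: if $[0,1)=\bigsqcup_n C_n$ is a Borel partition and $f_n\in\prod_{i}\modM_i^{B_i}$, the splice $f=\sum_n f_n\mathbf{1}_{C_n}$ equals $f_n$ on $B_i\cap C_n$, hence lands in $M_i$ almost everywhere on $B_i$, so $\modN$ is closed under countable combinations along partitions.

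The elementarity of $\modN\subseteq\modN'$ is the substantive point, and I would establish it with the Tarski--Vaught criterion for continuous logic, using the reduction (due to Ben Yaacov and Keisler, \cite{ben2009randomizations}) of every $\modL^R$-formula, modulo $T^R$, to a continuous combination of terms $\mu(\llbracket\varphi(\bar f)\rrbracket)$ with $\varphi$ first-order. It then suffices to prove, for every such formula $\Psi(\bar u,v)$ with $v$ ranging over the sort $\textbf{K}$ and every tuple $\bar a$ from $\modN$, that $\inf_{b\in\modN'}\Psi^{\modN'}(\bar a,b)=\inf_{b\in\modN}\Psi^{\modN'}(\bar a,b)$, together with the analogous identity for suprema (the clauses where $v$ is an event are trivial since $\modB$ is shared). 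Given $b\in\modN'$ and $\epsilon>0$, partition $[0,1)$ along the countably many values of $(\bar a,b)$ and the indices $i$; on a piece contained in $B_i$ on which $\bar a$ is constant, equal to a tuple $\bar c$ from $M_i$, and $b$ is constant equal to $e\in M$, keep $e$ if $e\in M_i$ and otherwise --- using $\modM_i\preccurlyeq\modM$ --- replace it by some $e'\in M_i$ realizing over $\bar c$ the same $\modL$-type as $e$ on the finitely many formulas occurring in $\Psi$; splicing these choices (using pre-completeness) produces $b'\in\modN$. Since $\Psi^{\modN'}(\bar a,\cdot)$ depends on its argument only through the events $\llbracket\varphi(\bar a,\cdot)\rrbracket$, which are unchanged piece by piece, $\Psi^{\modN'}(\bar a,b)=\Psi^{\modN'}(\bar a,b')$, so the infima agree; the identical computation handles suprema, completing the test. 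Conceptually this is just the statement that randomization values are computed fiberwise, so that $\modN\preccurlyeq\modN'$ reduces on each block $B_i$ to the elementarity of the Borel randomization of $\modM_i$ inside that of $\modM$, which holds because $\modM_i\preccurlyeq\modM$.

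The fiberwise replacement in the third step is where I expect the real work to be: one must approximate an arbitrary random variable of $\modN'$ by one obeying the constraint ``$\modM_i$-valued over $B_i$'' without perturbing the joint distribution of first-order types with the prescribed parameters. That is precisely where $\modM_i\preccurlyeq\modM$ must be used, uniformly and measurably across all blocks at once, and it is also where the reduction of $\modL^R$-formulas to first-order events is essential: without it an $\modL^R$-formula could in principle detect the substitution through some feature of the event algebra not captured by those events, which the reduction excludes.
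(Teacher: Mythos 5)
This Fact is imported by the paper from Andrews--Keisler (Theorem 7.3 in \cite{andrews2015separable}) and is not proved here, so your argument can only be compared with the standard one, which it reconstructs in essentially the right way: the event sorts coincide, conditions 4 and 5 of Definition 5.1.1 hold in the product randomization (your constructions, using $\modM_i\preccurlyeq\modM$ and a countable measurable selection over the countably many values of the parameters, are correct), and elementarity comes from the Ben Yaacov--Keisler reduction of $\modL^R$-formulas modulo $T^R$ to continuous combinations of terms $\mu(\llbracket\varphi(\cdot)\rrbracket)$. One remark on economy: once you know that $\left(\prod_{i\in I}\modM_i^{B_i},\modB\right)$ is a randomization of a model of $T$, it is a pre-model of $T^R$ by the very definition of $T^R$, and the reduction then makes \emph{every} embedding between pre-models of $T^R$ elementary; so your Tarski--Vaught detour with the fiberwise replacement of witnesses, while correct, is redundant given that your replacement step itself already appeals to the reduction. (It would be the right tool if you wanted to avoid quoting the reduction, but then the claim that $\Psi^{\modN'}(\bar a,\cdot)$ depends only on the events $\llbracket\varphi(\bar a,\cdot)\rrbracket$ would need an induction on $\Psi$ rather than a one-line appeal.)

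The one genuine gap is pre-completeness. What you verify is closure under countable splicing along Borel partitions, i.e.\ fullness; that is not the same as pre-completeness, which asks that every $d_\textbf{K}$-Cauchy sequence from the pre-structure converge to an element of the pre-structure. The fix is short: given a Cauchy sequence $(f_n)$ in $\prod_{i\in I}\modM_i^{B_i}$, pass to a subsequence with $d_\textbf{K}(f_{n_k},f_{n_{k+1}})<2^{-k}$; by Borel--Cantelli, for almost every $t$ the values $f_{n_k}(t)$ are eventually constant, and the function $f$ assigning that eventual value (arbitrary on the exceptional null set) is measurable, has countable range contained in $\bigcup_k \mathrm{Im}(f_{n_k})$, takes values in $M_i$ almost everywhere on each $B_i$, and satisfies $d_\textbf{K}(f_{n_k},f)\to 0$. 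With that paragraph added (keep the splicing argument, since you use it to assemble the witness $b'$), your proof is complete and consistent with the argument in \cite{andrews2015separable}.
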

\begin{fact}\label{RandSepNoIsoCaract}
\textit{(Theorem 7.5 in \cite{andrews2015separable})} Let $I$ be a finite or countable set of indexes, let $\modM_i\ \sbestrucel\ \modM$ for each $i\in I$, and let $[0,1)=\displaystyle\bigcup_{i\in I}B_i$ and $[0,1)=\displaystyle\bigcup_{i\in I}C_i$ be two partitions of $[0,1)$ into Borel sets. Suppose that $\lambda(B_i)=\lambda(C_i)$ for each $i\in I$. Then the product randomizations $\displaystyle\left(\prod_{i\in I}\modM_i^{B_i},\modB\right)$ and $\displaystyle\left(\prod_{i\in I}\modM_i^{C_i},\modB\right)$ are isomorphic. \end{fact}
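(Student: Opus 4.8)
\textit{Proof plan.} The plan is to transport one product randomization onto the other by a measure--preserving change of the underlying probability space. First I would use the hypothesis $\lambda(B_i)=\lambda(C_i)$ together with atomlessness of the measure algebra of $([0,1),\modB,\lambda)$ to produce, for each $i\in I$, a measure--preserving Boolean isomorphism $\modB\upharpoonright B_i\to\modB\upharpoonright C_i$, and then take the direct sum over $i\in I$ to obtain a measure--preserving automorphism $\theta$ of $\modB$ with $\theta(B_i)=C_i$ for all $i$. Since $([0,1),\modB,\lambda)$ is a standard Lebesgue space, $\theta$ is induced modulo null sets by a measure--preserving bijection $T\colon[0,1)\to[0,1)$, which one can arrange so that $T^{-1}(B_i)=C_i$ for every $i$. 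Equivalently, and more directly: on each $C_i$ the normalized Lebesgue space is an atomless standard probability space of the same total mass as $B_i$, so there is a measure--preserving bijection $C_i\to B_i$ modulo null sets, and these glue over $i\in I$.

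Next I would define a map $\Phi$ between the two pre-structures by $\Phi(A)=T^{-1}(A)$ on the sort $\B$ and $\Phi(f)=f\circ T$ on the sort $\textbf{K}$. One checks that $\Phi$ is well defined, i.e.\ that it sends $\prod_{i\in I}\modM_i^{B_i}$ into $\prod_{i\in I}\modM_i^{C_i}$: if $\lambda(\llbracket f\in M_i\rrbracket\triangle B_i)=0$ for all $i$, then $\llbracket f\circ T\in M_i\rrbracket=T^{-1}(\llbracket f\in M_i\rrbracket)$ agrees with $T^{-1}(B_i)=C_i$ up to a null set, and $f\circ T$ still has countable image with Borel fibres. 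For every $\modL$-formula $\varphi$ and tuple $\bar f$ one has $\llbracket\varphi(\Phi(\bar f))\rrbracket=T^{-1}(\llbracket\varphi(\bar f)\rrbracket)=\Phi(\llbracket\varphi(\bar f)\rrbracket)$, the Boolean operations on $\B$ commute with $T^{-1}$, and $\mu(\Phi(A))=\mu(A)$ because $T$ is measure preserving; hence $\Phi$ preserves every symbol of $\modL^R$ and, in particular, the pseudometrics $d_\textbf{K}$ and $d_\B$, so it is an isometric $\modL^R$-embedding of pre-structures.

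Finally I would check surjectivity: $\theta$ is an automorphism of $\modB$, and for $h\in\prod_{i\in I}\modM_i^{C_i}$ the random variable $h\circ T^{-1}$ lies in $\prod_{i\in I}\modM_i^{B_i}$ (since $T(C_i)=B_i$ modulo null sets) and satisfies $\Phi(h\circ T^{-1})=h$. Thus $\Phi$ is an isomorphism of the two product pre-structures, and it extends uniquely to an isomorphism of their completions, which are the product randomizations in the statement.

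The step I expect to be the main obstacle, such as it is, is the purely measure-theoretic one: producing the point transformation $T$ realizing $\theta$ (equivalently, gluing the family of measure--preserving bijections $C_i\to B_i$) and verifying that all the relevant identities hold modulo null sets --- which is exactly the level at which the randomization pre-structure is defined. Everything after that is a routine verification that composition with a measure--preserving transformation commutes with the interpretations $\llbracket\varphi(\cdot)\rrbracket$ and preserves $\mu$.
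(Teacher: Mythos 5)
This statement is quoted in the paper as a Fact, with the proof deferred to the cited source (Theorem 7.5 of Andrews--Keisler), so there is no in-paper proof to compare against; your argument is, however, correct and is essentially the standard one for this result: realize the equality $\lambda(B_i)=\lambda(C_i)$ by a measure-preserving transformation $T$ of $[0,1)$ carrying each $C_i$ onto $B_i$ modulo null sets, and transport the structure by $A\mapsto T^{-1}(A)$ on events and $f\mapsto f\circ T$ on random variables. The only points needing care are exactly the ones you flag --- existence of the point realization (which follows from the isomorphism theorem for atomless standard probability spaces, glued over the countably many pieces, with measure-zero pieces irrelevant mod $0$), and the fact that $\Phi$ is only surjective up to distance zero and $f\circ T$ is only defined off a null set --- all of which are harmless at the level of pre-structures and their completions, as you note.
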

\begin{remark}
A first order theory $T$ \textbf{has $\leq\omega$ countable models} if there is a finite or countable set $S$ of countable models of $T$ such that every countable model of $T$ is isomorphic to some member of $S$.  If $T$ has $\leq\omega$ countable models, $T$ has a countable saturated model.
\end{remark}

\begin{fact}
\textit{(Proposition 5.7 in \cite{andrews2015separable})} The following are equivalent:
\begin{itemize}
    \item[i)] $T$ has a countable saturated model. 
    \item[ii)] $T^R$ has a separable $\omega$-saturated model.
    \item[iii)] Every separable pre-model of $T^R$ is strongly separable.
\end{itemize}
\end{fact}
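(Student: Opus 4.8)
The plan is to establish the two equivalences (i)$\Leftrightarrow$(ii) and (i)$\Leftrightarrow$(iii) by showing that (i) implies both (ii) and (iii), and that $\neg$(i) implies both $\neg$(ii) and $\neg$(iii). Throughout, $T$ is a complete theory in a countable language, so I would first recall that (i) is equivalent to $T$ being \emph{small}, i.e.\ $S_n(T)$ is countable for every $n<\omega$, and that a countable saturated $\modM^{*}\models T$ realizes not merely every $n$-type but, by a straightforward recursion, every $\omega$-type of $T$. I would use these facts freely, together with the standard description (from \cite{ben2009randomizations}) of types of $T^{R}$ over a finite set of random variables and events as $\modB$-measurable families of types of $T$.

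For (i)$\Rightarrow$(ii) I would take $\modM^{*}$ a countable saturated model of $T$ and claim that $((\modM^{*})^{[0,1)},\modB)$ works: it is separable by Fact~\ref{ContableTSepTR}, and to see it is $\omega$-saturated, given a $T^{R}$-type over finitely many random variables $\bar f$ (and, without loss of generality, over the events of a finite partition of $[0,1)$), I would realize the corresponding measurable family $t\mapsto q_{t}$ of types of $T$ pointwise inside $\modM^{*}$ and glue the pointwise realizations into a genuine random variable via a measurable selection theorem. For (i)$\Rightarrow$(iii), let $\modN$ be a separable pre-model of $T^{R}$; passing to its completion and using the representation of models of $T^{R}$ as randomizations, I would arrange $\modN$ to be a randomization of some $\modM'\models T$ over a separable atomless probability algebra, identified with $([0,1),\modB,\lambda)$. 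Choosing a countable dense set $\{f_{m}\}$ of random variables, the map $t\mapsto \operatorname{tp}\bigl((f_{m}(t))_{m}\bigr)\in S_{\omega}(T)$ is $\modB$-measurable; since $\modM^{*}$ realizes every $\omega$-type, measurable selection gives $\modM^{*}$-valued random variables $g_{m}$ with $\operatorname{tp}((g_{m}(t))_{m})=\operatorname{tp}((f_{m}(t))_{m})$ a.e., and then $f_{m}\mapsto g_{m}$ preserves every value $\llbracket\varphi(\bar f)\rrbracket$, so it extends to an elementary embedding $\modN\hookrightarrow((\modM^{*})^{[0,1)},\modB)$ (using that $T^{R}$ reduces quantifiers to the $\llbracket\cdot\rrbracket$-level, so such maps are elementary); Proposition~4.3 of \cite{andrews2015separable} then gives that $\modN$ is strongly separable.

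For the converse direction, I would suppose $S_{n}(T)$ is uncountable for some $n$; since $S_{n}(T)$ is compact metrizable it then has size $2^{\aleph_{0}}$ and carries a non-atomic Borel probability measure. To see $\neg$(ii): an $\omega$-saturated $\modN_{0}\models T^{R}$ would have to realize, for each $q\in S_{n}(T)$, the consistent parameter-free $T^{R}$-condition ``$\operatorname{tp}(\bar f)=q$'', say by an $n$-tuple $\bar f^{q}$ of its random variables; for $q\neq q'$ one gets $\bar f^{q}(t)\neq\bar f^{q'}(t)$ for a.e.\ $t$ and hence $d(\bar f^{q},\bar f^{q'})\geq 1/n$, so the sort $\textbf{K}^{n}$ of $\modN_{0}$ would contain a $1/n$-separated family of size $2^{\aleph_{0}}$, contradicting separability. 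To see $\neg$(iii): fixing a non-atomic Borel probability measure $\nu$ on $S_{n}(T)$, I would invoke the representation theory of $T^{R}$ to produce a separable model $\modN\models T^{R}$ carrying an $n$-tuple $\bar f$ of random variables with distribution $\nu$ (the continuous-mixing analogue of a product randomization); if $\modN$ were strongly separable, then by Proposition~4.3 it would embed elementarily into $(\modM_{0}^{[0,1)},\modB)$ for some countable $\modM_{0}\models T$, and the image of $\bar f$, being an $n$-tuple of functions with countable range, would have an atomic distribution, contradicting that elementary embeddings preserve all the values $\mu(\llbracket\varphi(\bar f)\rrbracket)$, hence the distribution $\nu$.

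The hard part is not the logic but the measure-theoretic plumbing: pinning down the description of $T^{R}$-types over finite parameter sets as measurable families of $T$-types, carrying out the measurable-selection steps that turn pointwise realizations into random variables, and building a separable randomization realizing a prescribed non-atomic distribution on $S_{n}(T)$. Each of these is already available in the randomization literature (\cite{ben2009randomizations} and \cite{andrews2015separable}), so I would cite rather than reprove them; the one genuinely new observation is that smallness of $T$ is precisely the condition under which every distribution on the type space of $T$ can be realized ``countably'', which is the common mechanism behind all four implications.
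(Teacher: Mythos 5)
There is no internal proof to compare against here: the paper imports this statement verbatim as a Fact from Proposition 5.7 of \cite{andrews2015separable} and never argues it. Measured against the argument in that reference, your sketch is essentially a correct reconstruction: (i)$\Rightarrow$(ii) via the Borel randomization of the countable saturated model, (i)$\Rightarrow$(iii) via an elementary embedding into that Borel randomization obtained from pointwise realizations of the fibrewise types of a countable dense family, and the negative directions from uncountability of some $S_n(T)$ --- on one hand the types $\delta_q$, $q\in S_n(T)$, give a $1/n$-separated family in any $\omega$-saturated model, killing separability; on the other hand a separable model carrying a tuple with non-atomic distribution cannot embed elementarily into any Borel randomization, since countable-range functions have atomic distributions and elementary maps preserve all the values $\mu(\llbracket\varphi(\bar f)\rrbracket)$. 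The background you defer to (types of $T^R$ as measures/measurable families of types of $T$, quantifier elimination for $T^R$, selection into a countable model) is indeed available in \cite{ben2009randomizations} and \cite{andrews2015separable}, so outsourcing it is legitimate.

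One step is misstated, though repairably so. In (i)$\Rightarrow$(iii) you propose to ``arrange $\modN$ to be a randomization of some $\modM'\models T$'' over $([0,1),\modB,\lambda)$. The general representation of models of $T^R$ does not provide a single underlying model: a separable model is presented by random variables valued in a measurable \emph{family} of models (already a product randomization $\prod_i\modM_i^{B_i}$ mixes different $\modM_i$), and the existence of one countable model into which all fibres elementarily fit is essentially the strong separability you are trying to establish, so assuming it at this point is circular. The repair is that your argument never uses a single target model: all it needs is that, for a countable dense family $\{f_m\}$, the map $t\mapsto\operatorname{tp}\bigl((f_m(t))_m\bigr)\in S_\omega(T)$ is measurable, which holds for randomizations over a family of models, after which smallness of $T$ (equivalent to (i)) makes the relevant type spaces countable, the countable saturated model realizes every $\omega$-type, and the countable-range measurable selection goes through. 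With that correction the outline is sound; the remaining work is exactly the measure-theoretic bookkeeping you plan to cite rather than reprove.
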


\begin{fact}\label{CaractSepModel}
\textit{(Lemma 8.4 in \cite{andrews2015separable})} Suppose $T$ has $\leq\omega$ countable models, and let $\modM$ be a countable saturated model of $T$. Then every separable model $(\textbf{K},\B)$ of $T^R$ is isomorphic to a product randomization $\left(\displaystyle\prod_{i\in I}\modM_i^{B_i},\modB\right)$ in $\modM$. Moreover, the models $\modM_i$ can be taken to be pairwise non-isomorphic.
\end{fact}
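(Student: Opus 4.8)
The plan is to reduce to the strongly separable case and then analyse the Borel randomization fibrewise. First I would invoke Proposition 5.7 of \cite{andrews2015separable}, which applies since $T$, having $\leq\omega$ countable models, has a countable saturated model: every separable pre-model of $T^R$ is then strongly separable, and in particular $(\textbf{K},\B)$ embeds elementarily into $(\modN^{[0,1)},\modB)$ for some countable $\modN\models T$. Since the countable saturated model $\modM$ is universal, $\modN$ embeds elementarily into $\modM$, and because the Borel randomization operation preserves elementary embeddings (see \cite{ben2009randomizations}) this yields an elementary embedding of $(\textbf{K},\B)$ into $(\modM^{[0,1)},\modB)$. Replacing $(\textbf{K},\B)$ by its image, I may assume from now on that it is a separable elementary substructure of $(\modM^{[0,1)},\modB)$.

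The heart of the proof is a pointwise analysis of the fibres. Fixing a $d_\textbf{K}$-dense sequence $\{f_n\}_{n<\omega}\subseteq\textbf{K}$, I would set $\modN_t:=\{f_n(t)\ |\ n<\omega\}\subseteq\modM$ for $t\in[0,1)$ and show that for almost every $t$ the set $\modN_t$ is the universe of a countable elementary submodel of $\modM$. Closure under the $\modL$-function symbols and the Tarski--Vaught criterion each reduce to countably many conditions involving tuples of the $f_n$; for each such condition I would use the finite-gluing and approximate-witness axioms of $T^R$ together with completeness of the sort $\textbf{K}$ to produce a single $g\in\textbf{K}$ whose defining event is exactly the prescribed one (an equation $g=F(f_{n_1},\dots)$ or an existential witness), and then, writing $g$ as a $d_\textbf{K}$-limit of a subsequence of the $f_n$ and passing to an almost-everywhere convergent sub-subsequence, conclude that the required element already lies in $\{f_n(t)\}$ for a.e.\ $t$; intersecting the countably many conull sets obtained this way gives the claim. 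I would also record that ``$\modM\models\varphi(f_{n_1}(t),\dots,f_{n_k}(t))$'' holds precisely on the event $\llbracket\varphi(f_{n_1},\dots,f_{n_k})\rrbracket$, so that the isomorphism type of $\modN_t$ depends measurably on $t$.

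Each $\modN_t$ is then a countable model of $T$, and by hypothesis there are only $\leq\omega$ of these up to isomorphism; so I would list them as pairwise non-isomorphic $\{\modM_i\}_{i\in I}$ ($I$ finite or countable), realizing each $\modM_i$ as an elementary submodel of $\modM$ by universality, and set $B_i:=\{t\ |\ \modN_t\cong\modM_i\}$, a Borel set by the measurability remark, so that $\{B_i\}_{i\in I}$ partitions $[0,1)$ modulo a null set. To obtain a genuine product randomization I would straighten the fibres: using homogeneity of the countable saturated $\modM$, for $t\in B_i$ extend an isomorphism $\modN_t\to\modM_i$ to an automorphism $\tilde\sigma_t$ of $\modM$, taking the family $t\mapsto\tilde\sigma_t$ measurable, so that $f\mapsto(t\mapsto\tilde\sigma_t(f(t)))$ is an automorphism of $(\modM^{[0,1)},\modB)$ carrying $(\textbf{K},\B)$ onto a separable elementary substructure whose fibre over $B_i$ is exactly $\modM_i$; this substructure lies between $\bigl(\prod_{i\in I}\modM_i^{B_i},\modB\bigr)$ (a precomplete elementary substructure of $(\modM^{[0,1)},\modB)$ by Theorem 7.3 of \cite{andrews2015separable}) and $(\modM^{[0,1)},\modB)$, and one identifies it with $\bigl(\prod_{i\in I}\modM_i^{B_i},\modB\bigr)$ using the classification of separable product randomizations, Theorem~\ref{RandSepNoIsoCaract}, together with the fact that a separable atomless probability algebra is isomorphic to the Lebesgue algebra of $[0,1)$. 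Finally, merging indices carrying isomorphic $\modM_i$ only changes the $B_i$ up to measure, hence the resulting randomization up to isomorphism again by Theorem~\ref{RandSepNoIsoCaract}, which produces the version with the $\modM_i$ pairwise non-isomorphic.

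I expect the genuine difficulties to be concentrated in the last two steps. The fibrewise claim that $\modN_t\preceq\modM$ for a.e.\ $t$ forces one to upgrade the \emph{approximate} witness and gluing axioms of $T^R$ to exact, pointwise statements by exploiting completeness of the sort $\textbf{K}$ and keeping careful track of null sets. And the passage from the fibre decomposition to an honest product randomization is the main obstacle: one must simultaneously straighten the varying fibres (via homogeneity of $\modM$ and a measurable selection for the family $(\tilde\sigma_t)$) and reconcile the a priori non-standard, separable atomless event algebra of $(\textbf{K},\B)$ with the Lebesgue algebra, which is exactly where the product-randomization machinery of Section 7 of \cite{andrews2015separable} is needed.
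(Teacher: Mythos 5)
First, a point of comparison: the paper itself does not prove this statement. It is imported verbatim as Lemma 8.4 of \cite{andrews2015separable}, so your proposal has to be judged against that source and on its own merits rather than against an in-paper argument.

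Your opening moves are sound. Reducing to a strongly separable model via Proposition 5.7 of \cite{andrews2015separable} and then sitting $(\textbf{K},\B)$ elementarily inside $(\modM^{[0,1)},\modB)$ is fine, and your fibrewise claim is correct and well argued: perfect witnesses exist in a (pre-)complete model of $T^R$, density of $\{f_n\}$ plus passing to an a.e.\ eventually-constant subsequence puts those witnesses into $\{f_n(t)\}$ for a.e.\ $t$, and Tarski--Vaught on the intersection of countably many conull sets gives $\modN_t\preceq\modM$ almost everywhere. One small, patchable gap there: $B_i=\{t\ |\ \modN_t\cong\modM_i\}$ is not Borel just because first-order conditions on tuples of the $f_n$ define events; isomorphism of countable structures is only analytic, so you need either Suslin's theorem (countably many analytic sets partitioning a Borel set are each Borel) or to work with Lebesgue-measurable sets modulo null sets.

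The genuine gap is in your final step, which is where the entire content of the lemma sits. (a) The straightening by automorphisms $\tilde\sigma_t$ of $\modM$ does not work: isomorphic countable elementary submodels of a countable saturated model need not be conjugate under $\operatorname{Aut}(\modM)$, since countable saturation only yields extension of \emph{finite} partial elementary maps. For example, in the theory of dense linear orders with an increasing sequence of constants $c_i$, let $\modM$ be the countable saturated model, $N$ the elementary submodel consisting of all elements below some $c_i$, and $N'=N$ minus one non-constant point; then $N\cong N'$, both are elementary in $\modM$, but every automorphism of $\modM$ fixes $N$ setwise, so none carries $N$ onto $N'$. (b) Even if you replace the global automorphisms by a fibrewise family of isomorphisms $\sigma_t\colon\modN_t\to\modM_i$ (whose measurable selection is itself a nontrivial uniformization issue you do not address), the induced map $f\mapsto(t\mapsto\sigma_t(f(t)))$ only \emph{embeds} $(\textbf{K},\B)$ into $\bigl(\prod_{i\in I}\modM_i^{B_i},\modB\bigr)$; it does not lie ``between'' the product randomization and the Borel randomization, and nothing in your sketch shows the embedding is onto (equivalently, that its image is dense, that $\B$ fills the whole Lebesgue algebra and $\textbf{K}$ the whole product). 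The appeal to Fact~\ref{RandSepNoIsoCaract} cannot close this: that result compares two product randomizations with equal fibre measures and says nothing about an arbitrary separable elementary substructure of one. Note that already for $\aleph_0$-categorical $T$ the missing step is precisely the assertion that every separable model of $T^R$ is isomorphic to the full Borel randomization $(\modM^{[0,1)},\modB)$ (separable categoricity of $T^R$), which does not follow from the fibrewise picture alone; it is exactly the representation/uniqueness machinery for strongly separable models developed in \cite{andrews2015separable} ahead of Lemma 8.4 that your proposal takes for granted.
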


\begin{defn}
Let $T$ be a theory with $\leq\omega$ countable models, let $\modM(T)$ be the countable saturated model of $T$ and let $I(T)$ be the set of all isomorphism types of elementary submodels of $\modM(T)$. A \textit{density function on $I(T)$} is a function $\rho \colon  I(T)\rightarrow [0,1]$ such that $\displaystyle\sum_{i\in I(T)}\rho(i)=1$. 
\end{defn}
\begin{defn}
Assume $T$ has $\leq\omega$ countable models and let $(\textbf{K},\B)$ be a separable model of $T^R$. A \textit{density function for $(\textbf{K},\B)$} is a function density function $\rho$ on $I(T)$ such that $(\textbf{K},\B)$ is isomorphic to some product randomization 
$$\left(\prod_{i\in I(T)}\modM_i^{B_i},\modB\right) $$
in $\modM(T)$ where $\modM_i$ has isomorphism type $i$ and $\lambda(B_i)=\rho(i)$ for each $i\in I(T)$.
\end{defn}
\begin{fact}
\textit{(Theorem 8.6 in \cite{andrews2015separable})} Suppose that $T$ has $\leq\omega$ countable models. Then
\begin{itemize}
    \item[i)] every separable model of $T^R$ has a unique density function;
    \item[ii)] any two separable models of $T^R$ with the same density function are isomorphic; 
    \item[iii)] for every density function $\rho$ on $I(T)$, there is a separable model $(\textbf{K},\B)$ on $T^R$ with density function $\rho$.
\end{itemize}
\end{fact}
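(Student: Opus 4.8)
The plan is to deduce all three parts from the structure theory of separable models recalled above, chiefly Fact~\ref{CaractSepModel}, which represents every separable model of $T^R$ (up to isomorphism) as a product randomization $\left(\prod_{i\in I}\modM_i^{B_i},\modB\right)$ in the countable saturated model $\modM(T)$ with \emph{pairwise non-isomorphic} fibers $\modM_i$, together with Fact~\ref{RandSepNoIsoCaract} and the fact recalled just before it that product randomizations in $\modM(T)$ are pre-complete elementary substructures of the Borel randomization $(\modM(T)^{[0,1)},\modB)$. Part (iii) is then a direct construction: given a density function $\rho$ on $I(T)$, its support is at most countable, so I choose for each $i$ in the support an elementary submodel $\modM_i\preceq\modM(T)$ of isomorphism type $i$, take a Borel partition $[0,1)=\bigcup_iB_i$ with $\lambda(B_i)=\rho(i)$ (empty pieces allowed), and put $(\textbf{K},\B)=\left(\prod_i\modM_i^{B_i},\modB\right)$. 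This is a model of $T^R$ by the elementary-substructure fact; it is separable because $(\modM(T)^{[0,1)},\modB)$ is separable by Fact~\ref{ContableTSepTR} and a substructure of a separable metric structure, being a subspace of a separable metric space, is separable; and it has density function $\rho$ by construction.

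For the existence clause of (i), apply Fact~\ref{CaractSepModel} to a given separable model and set $\rho(i)$ equal to the measure of the (unique, since the fibers are pairwise non-isomorphic) piece whose fiber has type $i$, and $\rho(i)=0$ otherwise; since the $B_i$ partition $[0,1)$ we get $\sum_i\rho(i)=1$. For (ii), present two separable models with a common density function $\rho$ via Fact~\ref{CaractSepModel} as product randomizations in $\modM(T)$, restrict to $I=\{i\in I(T):\rho(i)>0\}$, and re-index to obtain $\left(\prod_i\modN_i^{B_i},\modB\right)$ and $\left(\prod_i\modP_i^{C_i},\modB\right)$ with $\modN_i\cong\modP_i$ both of isomorphism type $i$ and $\lambda(B_i)=\rho(i)=\lambda(C_i)$. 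Fact~\ref{RandSepNoIsoCaract} gives $\left(\prod_i\modN_i^{B_i},\modB\right)\cong\left(\prod_i\modN_i^{C_i},\modB\right)$, and applying fixed isomorphisms $\modN_i\to\modP_i$ fiberwise, which commutes with every map $\llbracket\varphi(\cdot)\rrbracket$ since each is an isomorphism of $\modL$-structures, gives $\left(\prod_i\modN_i^{C_i},\modB\right)\cong\left(\prod_i\modP_i^{C_i},\modB\right)$; composing the three isomorphisms finishes (ii).

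The step I expect to be the main obstacle is the \emph{uniqueness} clause of (i): one must show that $\rho$ is an invariant of the isomorphism type of $(\textbf{K},\B)$ and not merely of a chosen presentation as a product randomization. Merely recording, for each type $p$ over $\emptyset$, the measure of the event that the fiber realizes $p$ yields an isomorphism invariant, but extracting $\rho$ from such data alone is delicate because $\rho$ refers to the isomorphism types of the fibers, a priori finer than the set of realized types; it is cleaner to use an invariant that detects the fiber's isomorphism type directly. For $i\in I(T)$ presented by a countable model $N=\{a_0,a_1,\dots\}\models T$, I would take the supremum of $\mu(D)$, over $D\in\B$ admitting a sequence $(f_j)_{j<\omega}$ in $\textbf{K}$ such that: (a) for every $\modL$-formula $\varphi$, the event $\llbracket\varphi(f_{j_1},\dots,f_{j_n})\rrbracket$ contains $D$ up to a null set exactly when $N\models\varphi(a_{j_1},\dots,a_{j_n})$; (b) $\llbracket f_j=f_k\rrbracket\cap D$ is null for $j\neq k$; and (c) $D\setminus\bigcup_j\llbracket g=f_j\rrbracket$ is null for every $g$ in a fixed countable dense subset of $\textbf{K}$. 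These conditions force, for $\mu$-almost every $t\in D$, the map $a_j\mapsto f_j(t)$ to be a faithful elementary embedding of $N$ onto the fiber over $t$, hence the fiber to be isomorphic to $N$; here separability of $(\textbf{K},\B)$ is used so that (a) and (c), imposed on countably many data, hold simultaneously almost everywhere, and since a random variable takes values in the \emph{discrete} set $M$, a Borel--Cantelli argument upgrades (c) from the dense subset to all of $\textbf{K}$. Conversely, on the union of the pieces whose fiber has type $i$ such a sequence is built in any product randomization, that structure having enough random variables, so the supremum equals $\rho(i)$. Since (a)--(c) are phrased only through the maps $\llbracket\cdot\rrbracket$, the metric $d_{\textbf{K}}$, and quantification over $\textbf{K}$, they are preserved by any isomorphism of $\modL^R$-structures; hence $\rho(i)$ is an invariant and any two density functions for $(\textbf{K},\B)$ coincide. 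Verifying that (a)--(c) really pin the fiber down to $N$, rather than to a proper elementary extension realizing the same types, is the one place genuine work is needed; the remaining verifications reduce to the cited structure theory of separable randomizations.
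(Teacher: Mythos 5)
The paper does not prove this statement at all: it is imported verbatim as a Fact from \cite{andrews2015separable} (Theorem 8.6), so there is no in-paper argument to compare with, and your reconstruction should be judged on its own terms; on those terms it is essentially correct. Parts (iii) and the existence half of (i) are handled exactly as one would expect from the quoted structure theory, and in (ii) your extra fiberwise step is genuinely needed, since Fact \ref{RandSepNoIsoCaract} assumes the same fibers in both products (note that the commutation of the fiberwise maps with $\llbracket\varphi(\cdot)\rrbracket$ uses $\modN_i,\modP_i\preceq\modM(T)$, not merely that $\sigma_i$ is an isomorphism; also, for (ii) you do not need Fact \ref{CaractSepModel} at all, because the definition of density function already supplies the product presentations). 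The genuinely different route is your treatment of uniqueness in (i): you build a bespoke isomorphism invariant, the supremal $\mu(D)$ over events supporting a sequence $(f_j)$ with (a)--(c), and the worry you flag does close as you suggest: transporting to a product presentation, on a positive-measure part of $D$ inside a piece $B_{i'}$, (a) makes $a_j\mapsto f_j(t)$ an elementary embedding with image $N_t\cong N$ for a.e.\ $t$, membership of the $f_j$ in the product gives $N_t\subseteq M_{i'}$ a.e., and (c), upgraded by Borel--Cantelli and applied to the functions constantly equal to each $m\in M_{i'}$ on $B_{i'}$, gives $M_{i'}\subseteq N_t$ a.e.; hence $\modM_{i'}\cong N$, forcing $i'=i$, so the supremum equals $\rho(i)$ in any presentation as in the definition (with the convention $\sup\emptyset=0$ when $\rho(i)=0$), and since (a)--(c) are phrased in the $\modL^R$-structure the value is an isomorphism invariant. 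What this buys is independence from Andrews--Keisler's own uniqueness theorem; what it costs is that you are in effect reproving Fact \ref{UniqnessTheoremSep} (their Theorem 8.8), which the paper also quotes: the short route to (i) is to take two density functions for one model, replace fibers by isomorphic copies so both products use the same pairwise non-isomorphic $\modM_i$'s, and read off $\rho_1(i)=\rho_2(i)$ from Fact \ref{UniqnessTheoremSep} directly.
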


\begin{fact}\label{UniqnessTheoremSep}
\textit{(Theorem 8.8 in \cite{andrews2015separable})} Let $\modM$ be a countable model of $T$ and let $I$ be finite or countable set of indexes. Suppose that
\begin{itemize}
    \item[i)] $\modM_i\ \sbestrucel\ \modM$ for each $i\in I$;
    \item[ii)] the models $\modM_i$ are pairwise non-isomorphic;
    \item[iii)] $[0,1)=\displaystyle\bigcup_{i\in I}B_i$ and $[0,1)=\displaystyle\bigcup_{i\in I}C_i$ are partitions of $[0,1)$ into Borel sets;
    \item[iv)] $\displaystyle\left(\prod_{i\in I}\modM_i^{B_i},\modB\right)$ and $\displaystyle\left(\prod_{i\in I}\modM_i^{C_i},\modB\right)$ are isomorphic.
\end{itemize}
Then $\lambda(B_i)=\lambda(C_i)$ for each $i\in I$. 
\end{fact}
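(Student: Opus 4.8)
The plan is to attach to every product randomization an isomorphism invariant that records, for each isomorphism type of countable structure, the total measure of the base occupied by fibres of that type, and then to read $\lambda(B_i)=\lambda(C_i)$ directly off the assumed isomorphism. Fix an isomorphism $\Phi\colon\modR\to\modR'$ of $\modL^R$-structures, where $\modR=\left(\prod_{i\in I}\modM_i^{B_i},\modB\right)$ and $\modR'=\left(\prod_{i\in I}\modM_i^{C_i},\modB\right)$, and write $\Phi_\modB$ for its action on the event sort $\modB$ and $\Phi_{\textbf{K}}$ for its action on the sort $\textbf{K}$. Then $\Phi_\modB$ is an isomorphism of probability algebras, so it preserves $\mu$ and, being an order isomorphism of a complete Boolean algebra, preserves all suprema and infima; $\Phi_{\textbf{K}}$ is a bijection; and $\Phi$ commutes with every function symbol $\llbracket\varphi(\cdot)\rrbracket$ and with the Boolean operations on $\modB$. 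Recall that for a countable $\modM$ the algebra $\modB$ appearing in a product randomization is the (separable and complete) Lebesgue probability algebra.

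The key step is an infinitary transfer theorem, extending Keisler's transfer theorem for randomizations from first-order logic to $L_{\omega_1\omega}$. Extend the bracket to $L_{\omega_1\omega}$-formulas by $\llbracket\bigwedge_n\theta_n(\bar g)\rrbracket=\bigwedge_n\llbracket\theta_n(\bar g)\rrbracket$, $\llbracket\bigvee_n\theta_n(\bar g)\rrbracket=\bigvee_n\llbracket\theta_n(\bar g)\rrbracket$, $\llbracket\exists x\,\theta(x,\bar g)\rrbracket=\bigvee_{h\in\textbf{K}}\llbracket\theta(h,\bar g)\rrbracket$, and $\llbracket\forall x\,\theta(x,\bar g)\rrbracket=\bigwedge_{h\in\textbf{K}}\llbracket\theta(h,\bar g)\rrbracket$, the meets and joins being computed in the complete algebra $\modB$. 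I claim that in every product randomization $\left(\prod_{i\in I}\modM_i^{B_i},\modB\right)$ over a countable $\modM$, for every $L_{\omega_1\omega}$-formula $\theta(\bar x)$ and every tuple $\bar g$ from $\textbf{K}$,
\[
\llbracket\theta(\bar g)\rrbracket=\bigcup_{i\in I}\{\,t\in B_i\ :\ \modM_i\models\theta(\bar g(t))\,\}\qquad(\text{mod null sets}).
\]
This is proved by recursion on $\theta$. The atomic, negation, and (finitary or countable) connective cases follow from the defining equations and $\sigma$-completeness of $\modB$; the only non-routine case is the existential quantifier, where the inclusion $\supseteq$ needs a measurable choice of witnesses. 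Enumerate $\modM=\{a_k\}_{k<\omega}$, so that each $\modM_i$ equals $\{a_k\ :\ a_k\in\modM_i\}$; on each $B_i$ put $g^{\ast}(t)=a_k$ for the least $k$ with $a_k\in\modM_i$ and $\modM_i\models\theta(a_k,\bar g(t))$, and let $g^{\ast}(t)$ be arbitrary in $\modM_i$ where no such $k$ exists. By the recursion hypothesis the sets $\{t\in B_i:\modM_i\models\theta(a_k,\bar g(t))\}$ are Borel, so $g^{\ast}$ is a Borel, countably valued, fibre-respecting function, hence an element of $\prod_{i\in I}\modM_i^{B_i}$; by construction $\llbracket\theta(g^{\ast},\bar g)\rrbracket$ realizes the right-hand side while dominating every $\llbracket\theta(h,\bar g)\rrbracket$, so it is the required supremum. (This computation already takes place in the pre-structure, so completing the sort $\textbf{K}$ does not affect it; alternatively one may cite the $L_{\omega_1\omega}$ form of the randomization transfer theorem.)

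Now let $\sigma_{\modM_j}$ be a Scott sentence of the countable structure $\modM_j$, that is, an $L_{\omega_1\omega}$-sentence such that $\modN\models\sigma_{\modM_j}$ if and only if $\modN\cong\modM_j$ for every countable $\modN$. Applying the claim in $\modR$ and in $\modR'$, and using that the $\modM_i$ are pairwise non-isomorphic, gives
\[
\llbracket\sigma_{\modM_j}\rrbracket_{\modR}=B_j\qquad\text{and}\qquad\llbracket\sigma_{\modM_j}\rrbracket_{\modR'}=C_j\qquad(\text{mod null sets}).
\]
The element $\llbracket\sigma_{\modM_j}\rrbracket$ is built from the interpretations of the function symbols $\llbracket\varphi(\cdot)\rrbracket$, the Boolean operations, countable meets and joins in $\modB$, and the operations $\bigvee_{h\in\textbf{K}}$ and $\bigwedge_{h\in\textbf{K}}$; since $\Phi$ commutes with each of these (reindexing by the bijection $\Phi_{\textbf{K}}$ in the quantifier cases), an induction on $\sigma_{\modM_j}$ yields $\Phi_\modB\bigl(\llbracket\sigma_{\modM_j}\rrbracket_{\modR}\bigr)=\llbracket\sigma_{\modM_j}\rrbracket_{\modR'}$, i.e., $\Phi_\modB(B_j)=C_j$ in $\modB$. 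As $\Phi_\modB$ preserves $\mu$, and $\mu=\lambda$ on these algebras, $\lambda(B_j)=\lambda(C_j)$ for every $j\in I$, which is the assertion.

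The main obstacle I anticipate is the infinitary transfer theorem, and in particular arranging the measurable-selection argument in the existential case so that the witness $g^{\ast}$ provably lies in the product randomization and the recursion is visibly well founded; everything else is bookkeeping about order isomorphisms of probability algebras. I also note a shortcut in the case relevant to this paper, when $T$ has $\leq\omega$ countable models: then $\modR$ and $\modR'$ are separable models of $T^R$, which by Fact \ref{CaractSepModel} may be re-presented as product randomizations in the countable saturated model $\modM(T)$ along partitions whose blocks have measures $\lambda(B_j)$, respectively $\lambda(C_j)$, one block for each isomorphism type of the pairwise non-isomorphic $\modM_i$; since isomorphic separable models of $T^R$ share the same density function, the uniqueness of the density function quoted above again gives $\lambda(B_j)=\lambda(C_j)$.
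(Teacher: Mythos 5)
The paper never proves this statement: it is imported as a quoted fact (Theorem 8.8 of \cite{andrews2015separable}), so there is no internal proof to compare against and your argument is necessarily an independent reconstruction. Judged on its own, it is essentially correct. Your two pillars hold up: (1) in a product randomization over a countable $\modM$ the recursively defined $L_{\omega_1\omega}$-bracket is computed fibre-wise, $\llbracket\theta(\bar g)\rrbracket=\bigcup_{i\in I}\{t\in B_i : \modM_i\models\theta(\bar g(t))\}$ mod null, the only delicate case being the quantifier step, which your least-witness selection $g^{\ast}$ settles (it is countably valued, fibre-respecting, and Borel); and (2) the recursive bracket is built only from the $\modL^R$-operations, countable/arbitrary meets and joins in $\modB$, and sups/infs indexed by $\textbf{K}$, all of which an $\modL^R$-isomorphism preserves ($\Phi_\modB$ is a Boolean, hence order, isomorphism, so it preserves all existing suprema, and $\Phi_{\textbf{K}}$ reindexes the quantifier sups). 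Combining with Scott sentences of the pairwise non-isomorphic $\modM_j$ gives $\Phi_\modB(B_j)=C_j$ in the measure algebra and hence $\lambda(B_j)=\lambda(C_j)$. Two small repairs: the Borelness of $\{t\in B_i : \modM_i\models\theta(a_k,\bar g(t))\}$ should be justified not by the recursion hypothesis but by the fact that $\bar g$ has countable range with Borel preimages (the set is a countable union of sets $B_i\cap\{t:\bar g(t)=\bar b\}$), which works uniformly for all infinitary $\theta$; and you are tacitly using that $\modL$ is countable (so Scott sentences exist) and that product randomizations are pre-complete (so sups over the completed sort $\textbf{K}$ are sups over genuine fibre-respecting functions) --- the latter is exactly the quoted Theorem 7.3, so say so.

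By contrast, treat your closing ``shortcut'' only as a sanity check, not as a proof: it needs the additional hypothesis that $T$ has $\leq\omega$ countable models, which the statement does not assume, and it invokes Fact \ref{CaractSepModel} and the uniqueness of density functions from the same cluster of results in \cite{andrews2015separable} whose proofs are intertwined with Theorem 8.8, so as a derivation of the Fact it is at best partial and at worst circular. The self-contained Scott-sentence argument is the one to keep.
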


We are now ready to prove Theorem \ref{GranTeorema}, we just need the following well known result on extending partial orders.

\begin{fact}[Szpilrajn extension theorem]  \label{LemaOrdenes} Any partial order $(P,\sbestrucel)$, can be extended to a total order $(P,\leq)$. 
\end{fact}

\begin{teo}\label{GranTeorema}
Let $T$ be a first order complete theory with the SB-property for countable models. Furthermore, suppose that $T$ has at most countably many countable models. Then $T^R$ has the SB-property for separable models, that is, if $\modN_1,\ \modN_2\models T^R$ are separable models and there exists elementary embeddings  $\varphi \colon \modN_1 \to \modN_2$ and $\psi\colon \modN_2\to\modN_1$, then $\modN_1\cong\modN_2$.
\end{teo}
\begin{proof}
Let $\modU$ be the set of countable models of $T$ and let $\modM,\modN\models T$. We write $\modM\ \widetilde{\leq}\ \modN$ if there exist an elementary embedding $\varphi \colon \modM\to\modN$. Since $T$ has the SB-property,  $(\modU, \widetilde{\leq})$ is a countable partial order. By Fact \ref{LemaOrdenes} we can extend $(\modU, \widetilde{\leq})$ to a countable total order $(\modU, \leq_\modU)$ and since $(\Q,\leq)$ is universal for countable total orders, we can find $\Sigma\subseteq [0,1]\cap \Q$ such that  $( \modU,\leq_\modU)\cong(\Sigma,\leq) $. Whenever $i\in\Sigma$ we write $\modM_i$ for the model in $\modU$ corresponding to the previous isomorphism. Since $T$ has countably many models, it has a prime model and a saturated model. Thus, $\Sigma$ has a minimum and a maximum element. Thus we can choose $\Sigma$ such that, $0,1  \in\Sigma$ and the following holds:
\begin{itemize}
    \item[i)] $\modM_0$ is the prime model of $T$.
    \item[ii)] $\modM_1$ is the countable saturated model of $T$.
    \item[iii)] If $i,j\in\Sigma$ and $\modM_i\ \widetilde{\leq}\ \modM_j$ then $i\leq j$.
\end{itemize}
Let $\modN_1,\modN_2\models T^R$ be separable elementary bi-embeddable models. By Fact \ref{CaractSepModel} we can assume that  $\displaystyle\modN_1\cong\prod_{i\in \Sigma}\modM_i^{A_i}$ and $\displaystyle\modN_2\cong\prod_{i\in\Sigma}\modM_i^{B_i}$, for some Borel partitions $\lbrace A_i\rbrace_{i\in\Sigma}$ and $\lbrace B_i\rbrace_{i\in\Sigma}$ of $[0,1)$.
\\
\\
In order to get a contradiction, assume that  $\modN_1\ncong\modN_2$. By Facts \ref{RandSepNoIsoCaract} and \ref{UniqnessTheoremSep}
$$K=\left\lbrace\ k\in\Sigma\ |\  \lambda(A_k)\neq\lambda(B_k)\ \right\rbrace\neq\emptyset,$$
and let $k_0=\inf(K)$. We need to consider the following cases: 
\begin{itemize}
    \item[\textit{Case 1:}\hspace{-0.5cm}] \hspace{0.5cm} $k_0$ is the minimum element of $ K.$  
    \\
    Since $\lambda(A_{k_0})\neq\lambda(B_{k_0})$, without loss of generality we may assume that $\lambda(A_{k_0})<\lambda(B_{k_0}).$ So $\displaystyle\sum_{i\in\Sigma,\ k_0<i\leq1}\lambda(B_i)\ <\ \displaystyle\sum_{i\in\Sigma,\ k_0<i\leq1}\lambda(A_i)$. However, since $\modN_1\ \hookrightarrow\ \modN_2$ and for all $i>k_0$, $\modM_i\not\hookrightarrow\modM_{k_0}$, we must have that
    $$\prod_{i\in\Sigma,\ k_0<i\leq1}\modM_i^{A_i}\hookrightarrow\prod_{i\in\Sigma,\ k_0<i\leq1}\modM_i^{B_i} $$
    and then $\displaystyle\sum_{i\in\Sigma,\ k_0<i\leq1}\lambda(A_i)\leq\displaystyle\sum_{i\in\Sigma,\ k_0<i\leq1}\lambda(B_i)$, a
    contradiction. 
    \item[\textit{Case 2:}\hspace{-0.5cm}] \hspace{0.5cm} $k_0$ is not a minimum in $K$.
    \\
    We will need the next claim: 
    \begin{afirm}\label{LemaSumasEnQ}
    There exists $\epsilon>0$ such that $k_0+\epsilon\in\Sigma$ and just one of the following holds: 
    \begin{itemize}
        \item[i)] $\displaystyle\sum_{i\in\Sigma,\ k_0<i\leq k_0+\epsilon}\lambda(A_i)\ <\ \sum_{i\in\Sigma,\ k_0<i\leq k_0+\epsilon}\lambda(B_i).$
        \item[ii)] $\displaystyle\sum_{i\in\Sigma,\ k_0<i\leq k_0+\epsilon}\lambda(A_i)\ >\ \sum_{i\in\Sigma,\ k_0<i\leq k_0+\epsilon}\lambda(B_i). $
    \end{itemize}
    \end{afirm}
    \begin{proof}
    Suppose not, then for all $\epsilon>0$ such that $k_0+\epsilon\in\Sigma$ we have that 
    $$\sum_{k_0<i\leq k_0+\epsilon}\lambda(A_i)=\sum_{k_0<i\leq k_0+\epsilon}\lambda(B_i). $$ 
    So fix  $\epsilon>0$ and since $k_0$ is not a minimum of $K$, exist $q\in(k_0,k_0+\epsilon)\cap\Sigma$ such that, without loss of generality, $\lambda(A_q)<\lambda(B_q)$, and by hypothesis since $q=k_0+\delta$ for $\delta>0$ we have that
    $$\sum_{i\in\Sigma,\ k_0<i\leq q}\lambda(A_i)=\sum_{i\in\Sigma,\ k_0<i\leq q}\lambda(B_i) $$
    then
    $$\sum_{i\in\Sigma,\ k_0<i< q}\lambda(A_i)>\sum_{i\in\Sigma,\ k_0<i< q}\lambda(B_i). $$
    But
    $$\hspace{-4.5cm}\sum_{i\in\Sigma,\ k_0<i<q}\lambda(A_i)=\sup_{q'\in(k_0,q)\cap\Sigma}\ \left(\sum_{i\in\Sigma,\ k_0<i\leq q'}\lambda(A_i)\right)>$$
    $$\hspace{7.5cm}\sup_{q'\in(k_0,q)\cap\Sigma}\ \left(\sum_{i\in\Sigma,\ k_0<i\leq q'}\lambda(B_i)\right)=\sum_{i\in\Sigma,\ k_0<i<q}\lambda(B_i)$$
    therefore there exists $q_0'\in(k_0,q)\cap\Sigma$ such that $\displaystyle\sum_{i\in\Sigma,\ k_0<i\leq q_0'}\lambda(A_i)>\sum_{i\in\Sigma,\ k_0<i\leq q_0'}\lambda(B_i)$, which is a contradiction.
    \end{proof}
    Using the previous claim, choose $\epsilon>0$ such that $q=k_0+\epsilon\in[0,1]\cap \Sigma$ and assume that $\displaystyle\sum_{i\in\Sigma,\ k_0<i\leq q}\lambda(A_i)<\sum_{i\in\Sigma,\ k_0<i\leq q}\lambda(B_i)$. Therefore 
    $$\displaystyle\sum_{i\in\Sigma,\ q<i\leq1}\lambda(B_i)<\displaystyle\sum_{i\in\Sigma,\ q<i\leq1}\lambda(A_i).$$
    However, since $\modN_1\ \hookrightarrow\ \modN_2$ and for all $i>q$, $i\in\Sigma$, $\modM_i\not\hookrightarrow\modM_{q}$, we must have that 
    $$\prod_{i\in\Sigma,\ q<i\leq1}\modM_i^{A_i}\hookrightarrow\prod_{i\in\Sigma,\ q<i\leq1}\modM_i^{B_i} $$
    and thus $\displaystyle\sum_{i\in\Sigma,\ q<i\leq1}\lambda(A_i)\leq\displaystyle\sum_{i\in\Sigma,\ q<i\leq1}\lambda(B_i)$, a contradiction.
    \end{itemize}
\end{proof}
We now provide a couple of example of the previous result:
\begin{ej}
 Let $T=ACF_0$, then $T^R$ has the SB-property for separable models.
\end{ej}

\begin{defn}
For every cardinal $\kappa\geq\aleph_0$, let $I(T,\kappa)$ be the number of nonisomorphic models of $T$ of cardinality $\kappa$.
\end{defn}

\begin{defn}
A complete theory $T$ is an \textit{Ehrenfeucht theory} if it is countable, and $1<I(T,\aleph_0)<\aleph_0$.

\end{defn}

\begin{ej}
Let $T$ be an Ehrenfeucht theory with the SB-property for countable models. Then for $T^R$ has the SB-property for separable models.  
\end{ej}
We now show an easy connection between the SB-property of $T$ and the SB-property of $T^R$.
\begin{cor}\label{Cor:SBrand}
Let $T$ be a complete first order theory with at most countably many countable models. Then $T$ has the SB-property for countable models if and only if $T^R$ has the SB-property for separable models.
\end{cor}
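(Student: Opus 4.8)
The plan is to prove the two implications separately; one of them is already in hand. The forward direction --- if $T$ has the SB-property for countable models, then $T^R$ has the SB-property for separable models --- is exactly the content of Theorem \ref{GranTeorema}, so nothing new is needed there. For the converse I would argue by contraposition: assuming $T$ fails the SB-property for countable models, I will manufacture two bi-embeddable, non-isomorphic separable models of $T^R$.

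So suppose there are countable $\modM_1,\modM_2\models T$ with elementary embeddings $e\colon\modM_1\to\modM_2$ and $e'\colon\modM_2\to\modM_1$ but $\modM_1\not\cong\modM_2$. I would pass to the Borel randomizations $(\modM_1^{[0,1)},\modB)$ and $(\modM_2^{[0,1)},\modB)$, which by Fact \ref{ContableTSepTR} are separable (pre-)models of $T^R$. For bi-embeddability, note that post-composition $\hat e(f)=e\circ f$ (with the identity on $\modB$) is a well-defined $\modL^R$-embedding $(\modM_1^{[0,1)},\modB)\to(\modM_2^{[0,1)},\modB)$: injectivity of $e$ makes the preimages $(e\circ f)^{-1}[m]$ Borel, the measure is untouched, and elementarity of $e$ gives $\llbracket\varphi(e\circ\bar f)\rrbracket=\llbracket\varphi(\bar f)\rrbracket$ for every $\modL$-formula $\varphi$; moreover this map is elementary, this functoriality of the randomization construction being standard (see \cite{ben2009randomizations}). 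Applying it to $e$ and to $e'$ shows the two Borel randomizations are bi-embeddable.

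For non-isomorphism I would invoke the density-function classification. Since $T$ has $\leq\omega$ countable models it has a countable saturated model $\modM(T)$, which is universal for countable models, so each $\modM_j$ is isomorphic to an elementary submodel of $\modM(T)$ and thus determines a point $[\modM_j]\in I(T)$. By Fact \ref{CaractSepModel} (or directly, using functoriality again with an elementary embedding $\modM_j\hookrightarrow\modM(T)$) the Borel randomization $(\modM_j^{[0,1)},\modB)$ is, up to isomorphism, the product randomization in $\modM(T)$ whose single index $[\modM_j]$ carries all of $[0,1)$, hence its (unique, by \cite[Theorem 8.6]{andrews2015separable}) density function $\rho_j$ satisfies $\rho_j([\modM_j])=1$. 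Because $\modM_1\not\cong\modM_2$ we get $\rho_1\neq\rho_2$; since the density function is an isomorphism invariant, an isomorphism $(\modM_1^{[0,1)},\modB)\cong(\modM_2^{[0,1)},\modB)$ would force $\rho_1=\rho_2$ by uniqueness, a contradiction. Thus the two randomizations are bi-embeddable and non-isomorphic, so $T^R$ fails the SB-property for separable models, which completes the contrapositive and hence the proof. The only step I expect to require care is the functoriality claim --- that an elementary embedding of first-order models lifts to an elementary embedding of Borel randomizations --- since it is used both to transfer bi-embeddability upward and to identify the density functions; the rest is a direct application of the facts collected above.
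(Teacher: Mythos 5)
Your proposal is correct and follows essentially the same route as the paper: the forward direction is delegated to Theorem \ref{GranTeorema} (note the paper cites the \emph{proof} of that theorem, since its statement assumes the full SB-property while only SB for countable models is actually used), and the backward direction randomizes two bi-embeddable countable models and uses the classification of separable models of $T^R$ to pull the isomorphism back down. Your use of Borel randomizations together with uniqueness of density functions is interchangeable with the paper's product randomizations concentrated on a single index plus Facts \ref{RandSepNoIsoCaract} and \ref{UniqnessTheoremSep}, and the contrapositive phrasing is an immaterial difference.
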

\begin{proof}\hspace{0.01cm}
\begin{itemize}
    \item[$\Rightarrow$] It follows from proof of Theorem \ref{GranTeorema}.
    \item[$\Leftarrow$] Let $\modU=\lbrace \modM_i\rbrace_{i<\omega}$ be the set of countable models of $T$ and let  $\modM_{i_1}$, $\modM_{i_2}\in\modU$ be elementarily bi-embeddable models of $T$. Now, consider $\displaystyle\modN_1=\prod_{i<\omega}\modM_i^{A_i}$ and $\displaystyle\modN_2=\prod_{i<\omega}\modM_i^{B_i}$ two separable models of $T^R$ such that $\lambda(A_{i_1})=1$ and $\lambda(B_{i_2})=1.$ Since $\modM_{i_1}\ \hookrightarrow\ \modM_{i_2}$ and $\modM_{i_2}\ \hookrightarrow\ \modM_{i_1}$, we have that $\modN_1\ \hookrightarrow\ \modN_2$ and $\modN_2\ \hookrightarrow\ \modN_1$, then by hypothesis $\modN_1\cong\modN_2$ so by Facts \ref{RandSepNoIsoCaract} and \ref{UniqnessTheoremSep} $\modM_{i_1}\cong\modM_{i_2}$, as desired.
\end{itemize}
\end{proof}

In the next example  we consider an Ehrenfeucht theory $T$ without the SB-property but with the countable SB-property and we show that the associated randomized theory $T^R$ does not have the SB-property. 

\begin{ej}
Let $\modL=\lbrace <,(c_i)_{i<\omega}\rbrace$, where $<$ is a binary relation symbol and for each $i<\omega$, $c_i$ is a constant symbol. Let $T$ be an $\modL$-theory saying that $<$ is a dense linear order without end points and $c_i<c_{i+1}$ for all $i<\omega$. It is well known \cite[Example 2.3.4]{buechler2017essential} that $T$ has $3$ countable models, and it is easy to see that the SB-property holds for countable models. Therefore by Theorem \ref{GranTeorema} the theory $T^R$ has the SB-property for separable models. Note that $T^R$  has $2^{\aleph_0}$ non-isomorphic separable models.

However, $T$ does not have the SB-property. Let $\modM_1=\Q\sqcup\R$ and $\modM_2=\R\sqcup\Q$ be models of $T$ where the constants are interpreted in the copy of $\R$ so that the sequence $(c_i)_{i<\omega}$ converges in $\R$. It is easy to see that $\modM_1$ and $\modM_2$ are elementary bi-embeddable, however they are not isomorphic, since $\modM_1$ has a countable initial segment but $\modM_2$ does not; thus $T$ does not have the SB-property. Let us prove that $T^R$ does not have the SB-property. 

First note that if $\varphi\colon \modM_1\to\modM_2$ is an elementary embedding, then it induces an elementary embedding $\Tilde{\varphi} \colon \modM_1^{[0,1)}\to\modM_2^{[0,1)}$ given by $\Tilde{\varphi}(f)=\varphi\circ f$. So we have that $\modM_1^{[0,1)}$ and $\modM_2^{[0,1)}$ are also elementary bi-embeddable. 

We will now prove that the structures $\modM_1^{[0,1)}$ and $\modM_2^{[0,1)}$ are not isomorphic. Assume, in order to obtain a contradiction, that there exists $\Phi \colon \modM_1^{[0,1)}\to\modM_2^{[0,1)}$ an isomorphism. Let $a\in\modM_1$ be such that $|S_a=\lbrace x\in\modM_1\ |\ x<a\rbrace|=\aleph_0.$ Consider the set  
$$\Omega^a=\left\lbrace f\in\modM_1^{[0,1)}\ |\ \lambda\left(\llbracket f< \chi_a\right\rrbracket)=1\right\rbrace,$$
where $\chi_a\in\modM_1^{[0,1)}$ is such that $\lambda(\llbracket \chi_a=a\rrbracket)=1.$ Note that $\Omega^a$ is separable since $$\Omega_0^a=\lbrace f\in\Omega^a\ |\ |Im(f)|<\aleph_0\rbrace$$
is a separable dense subset, then $\Phi(\Omega^a)$
must be separable and $\Phi(\Omega_0^a)$ a separable dense subset of $\Phi(\Omega^a)$. Also, note that since $\Phi$ is an isomorphism we must have that 
$$\Phi(\Omega^a)=\left\lbrace h\in\modM_2^{[0,1)}\ |\ \lambda(\llbracket h<\Phi(\chi_a)\rrbracket)=1\right\rbrace, $$
but, as we will now show, $\Phi(\Omega^a)$ cannot be separable. Indeed, if we are given $\lbrace h_n\rbrace_{n<\omega}\subseteq \Phi(\Omega^a)$, we have that, for all $n<\omega$, $|Im(h_n)|\leq\aleph_0$, so  $\left|\displaystyle\bigcup_{n<\omega}Im(h_n)\right|=\aleph_0$. 
Since $\Phi(\chi_A)\in \modM_2^{[0,1)}$, we may write $\Phi(\chi_A)=\displaystyle \sum_i  m_i \chi_{B_i}$ where $\{B_i\}_i$ is a measurable partition of $[0,1)$, $m_i\in \modM_2=\mathbb{R} \sqcup \mathbb{Q}$ and we may assume $\mu(B_i)>0$.

Since for all $i$, $\left|S_i=\lbrace x\in\modM_2\ |\ x<m_i \rbrace\right|=2^{\aleph_0}$, we can choose $m_i'\in S_{i}\backslash\displaystyle\bigcup_{n<\omega}Im(h_n)$. Let $h\in\Phi(\Omega^a)$ be defined as 
$$h=\sum_i m_i'\chi_{B_i}.$$ 
Then for all $n<\omega$, $$d(h,h_n)=\lambda(\llbracket h\neq h_n\rrbracket)=1,$$
and thus $\Phi(\Omega^a)$ is not separable, a contradiction. We have shown $\modM_1^{[0,1)}$ and $\modM_2^{[0,1)}$ are two elementarily bi-embeddable models of $T^R$ that are not isomorphic, so $T^R$ does not have the SB-property.

\end{ej}

The arguments in this section depend heavily on the description of separable models of $T^R$ when $I(T,\aleph_0)\leq \aleph_0$. This leaves some open questions:

\begin{preg}
Does $T$ has the SB-property for countable models if and only if $T^R$ has the SB-property for separable models? 
\end{preg}

And more generally

\begin{preg}
Does $T$ has the SB-property if and only if $T^R$ has the SB-property? 
\end{preg}

\section{Superstability and the SB-property}
This section is based on section 5.1 in \cite{goodrick2007does}. In Theorem \ref{TeoSuperstableSB} we prove, in the continuous context, a result analogous to Theorem 5.5 in \cite{goodrick2007does}. This theorem is the first part of the result stated in Fact \ref{superstable-SB} for first order discrete theories.
\\
\\
For the rest of the section we will fix $\modL$ a continuous countable language, and $T$ a stable theory.  
\begin{defn}
Let $\modM\models T$ sufficiently saturated, let $A_0\subseteq A_1 \subset \modM$ be small, $\overline b\in \modM$ and $\epsilon>0$. We say that $tp(\overline b/A_1)$ $\epsilon$-forks over $A_0$ if for all $\overline{b}'\in \modM$ with $d(\overline{b},\overline{b}')<\epsilon$ we have that 
$tp(\overline{b}'/A_1)$ forks over $A_0$. We say that $T$ is \textit{strictly stable} if there is $\epsilon>0$ and an infinite chain of types 
    $$p_0(\overline{x})\subset p_1(\overline{x})\subset p_2(\overline{x})\subset\dots$$
    such that for all $i<\omega$, $p_{i+1}$ $\epsilon$-forks over $dom(p_i).$
\end{defn}
\begin{defn}[\textit{Based on Definition 5.2 in \cite{goodrick2007does}}]\label{Def-aislado-Constru}\hspace{0.01cm} 

    \begin{itemize}
        \item[I.] A type $p\in S(B)$ is $f$-\textit{isolated} if there is a  finite set $A\subseteq B$ such that $p$ does not fork over $A$.
        \item[II.] We say that $B$ is $f$-\textit{constructable over $A$} if $A\subseteq B$,  and there is a enumeration $\lbrace b_i\rbrace_{i<\alpha}$ of a dense subset in $B$ such that for all $i<\alpha$, $tp\left(b_i/A\cup\lbrace b_j\rbrace_{j<i}\right)$ is $f$-isolated.
    \end{itemize}
\end{defn}
\begin{obs}
    In the previous definition, if $b_i\in A$ then $tp(b_i/A\cup\lbrace b_j\rbrace_{j<i})$ is always $f$-isolated, since $b_i\forkindep_{b_i} A\cup\lbrace b_j\rbrace_{j<i}$.
\end{obs}
\begin{obs}
    Recall that a theory $T$ is \textit{superstable} if is stable but not strictly stable.
\end{obs}

\begin{lem}\label{Lem1SuperStab}
For every set $A$ there is a model $\modM\supseteq A$ such that $\modM$ is $f$-constructable over $A$ and $\| \modM\|\leq\|A\|+\|T\|$.
\end{lem}
\begin{proof}
Take a dense subset $A_0\subseteq A$ such that $|A_0|=\|A\|$, and let $\lbrace \varphi_{(i,0)}(x)=0\rbrace_{i\in I}$ be a dense subset of consistent formulas in $T_{A_0}$, the theory obtained after adding constants for $A_0$. Note that, since $T$  is countable, $|I|=\|T\|+|A_0|$. For each $i\in I$, let $b_{(i,0)}\models (\varphi_{(i,0)}(x)=0)$, 
and choose $b_{(i,0)}$ such that $tp(b_{(i,0)}/\overline{a}_{(i,0)}\cup\lbrace b_{(j,0)}\rbrace_{j<i})$ does not fork over $\overline{a}_{(i,0)}$, where $\overline{a}_{(i,0)}$ are the parameters of $\varphi_{(i,0)}(x)$. Let $A_1=A_0 \cup \lbrace b_{(i,0)}\rbrace_{i\in I}$, so by definition $A_1$ is $f$-constructable over $A_0$. Inductively we can construct a chain 
$$A_0\subseteq A_1\subseteq A_2\subseteq A_3\subseteq\dots$$
such that $A_{i+1}$ is $f$-constructable over $A_i$ and call $B=\displaystyle\bigcup_{i<\omega}A_i$. For each $i<\omega$, $|A_i|=\|T\|+|A_0|$, so $|B|=\| T\|+|A_0|$. Now, $\modM=\overline{B}$ is a model of $T$ with $\|\modM\|\leq|B|$ and $A\subseteq\modM$. Let us see that $\modM$ is the model we are looking for. Consider the enumeration of $B$ given by $\lbrace b_{(i,j)}\rbrace_{i\in I,j<\omega}$  with the lexicographic order, where as above $b_{(i,j)}\in A_{j+1}$ and $b_{(i,j)}\models (\varphi_{(i,j)}(x)=0)$, where $\lbrace \varphi_{(i,j)}(x)=0\rbrace_{i\in I}$ is a dense subset of consistent formulas in $T_{A_{j}}$. By construction, for every $(i,j)$ we have that 
$$p_{ij}(x)=tp(b_{(i,j)}/\overline{a}_{(i,j)}\cup\lbrace b_{(i,k)}\rbrace_{k<j}\cup\lbrace b_{(n,\ell)}\rbrace_{n<i,\ell<\omega} )$$ 
does not fork over $\overline{a}_{(i,j)}$, the tuple of parameters for the formula $\varphi_{(i,j)}(x)$, so $p_{ij}(x)$ is $f$-isolated. Thus, by monotonicity we have that $tp(b_{(i,j)}/ A_0\cup\lbrace b_{(i,k)}\rbrace_{k<j}\cup\lbrace b_{(n,\ell)}\rbrace_{n<i,\ell<\omega} )$
is $f$-isolated. Since $A_0$ is dense in $A$, 
$tp(b_{(i,j)}/ A\cup\lbrace b_{(i,k)}\rbrace_{k<j}\cup\lbrace b_{(n,\ell)}\rbrace_{n<i,\ell<\omega} )$
is also $f$-isolated.

\end{proof}

\begin{lem}\label{Lem2SuperStab}
    If $\modM$ if $f$-constructable over $A$, then there exist a dense subset $B\subseteq\modM$ such that for all $\overline{b}\in B$ there is a finite subset $F\subseteq A$ for which  $tp(\overline{b}/A)$ does not fork over $F$.
\end{lem}
\begin{proof}
Since $\modM$ is $f$-constructable take $B_0=\lbrace b_i\rbrace_{i<\alpha}$ dense in $\modM-A$ as in Definition \ref{Def-aislado-Constru} and let $B=B_0\cup A$. It is enough to take $\overline{b}=(b_{i_1},\dots,b_{i_n})\in B-A$ for some $i_1<\dots<i_n<\alpha$ and proof by induction on $i_n$ that there exist $F\subseteq A$ finite such that $tp(\overline{b}/A)$ does not fork over $F$.

The base case is simply the definition of $f$-constructability, so assume it has been proved for all sequences from $B$ that are constructed before the $i_n$-th stage. Because $\modM$ is $f$-constructable, there exists $F_0\subseteq A$ finite and $b_{j_1},\dots,b_{j_m}$ with $j_1<\dots,j_m<i_n$ such that $tp(b_{i_n}/A\cup\lbrace b_k\rbrace_{k<i_n})$ does not fork over $F_0\cup\lbrace b_{j_1},\dots,b_{j_m}\rbrace$. Call $\tilde{b}=(b_{j_1},\dots,b_{j_m},b_{i_1},\dots,b_{i_{n-1}})$, by induction hypothesis there exist $F_1\subseteq A$ finite such that $tp(\tilde{b}/A)$ does not fork over $F_1$ and write $F=F_0\cup F_1$. Since $b_{i_n}\forkindep_{F_0\cup\lbrace b_{j_1},\dots,b_{j_m}\rbrace}A\cup\lbrace b_k\rbrace_{k<i_n}$ by monotonicity we have that $ b_{i_n}\forkindep_{F\cup\tilde{b}}A$. By symmetry 
 \begin{equation}\label{Fork1}
     A\forkindep_{F\cup\tilde{b}}b_{i_n}.
 \end{equation}
  Similarly since $\tilde{b}\forkindep_{F_1}A$ by monotonicity we have that $\tilde{b}\forkindep_F A$ and by symmetry 
  \begin{equation}\label{Fork2}
  A\forkindep_F \tilde{b}.
  \end{equation}
  So using (\ref{Fork1}) and (\ref{Fork2}) by transitivity we have that $A\forkindep_{F} \tilde{b},b_{i_n} $, so by finite character we have that 
  $$A\forkindep_Fb_{i_1},\dots,b_{i_{n-1}},b_{i_n},$$
  and finally by symmetry we get that 
  $$ \overline{b}\forkindep_{F} A$$
 as we wanted.
\end{proof}

\begin{teo}\label{TeoSuperstableSB}
   If $T$ is strictly stable then $T$ does not have the SB-property.
 \end{teo}
\begin{proof}
    Since $T$ is strictly stable there exist an $\epsilon>0$ and a sequence of possibly infinite sets 
    $$A_0\subset A_1\subset A_2\subset\dots$$
    such that $tp(\overline{x}/A_i)\subseteq tp(\overline{x}/A_{i+1})$ and $tp(\overline{x}/A_{i+1})$ $\epsilon$-forks over $A_i$.  Call $A=\displaystyle\bigcup_{i<\omega}A_i$, and let $p(\overline{x})=\displaystyle\bigcup_{i<\omega}tp(\overline{x}/A_i)\in S_{|\overline{x}|}(A)$. Since $T$ is stable, there exist saturated model $\modM$ such that $\|\modM\|$ is at least $\|T\|^+$. Choose $\modM$ such that $\modM\forkindep A$. Now by the Lemma \ref{Lem1SuperStab} there exist a model $\modN$ with $\|\modN\|=\|\modM\|$ and $\modN$  is $f$-constructable over $\modM\cup A$. 
    \begin{afirm}\label{ClaimSuperStab}
        There is no $\overline{b}\in\modN$ such that $\overline{b}\vdash p(\overline{x}).$
    \end{afirm}
    \begin{proof}
        Suppose there is $\overline{b}\in\modN$ such that $\overline{b}\vdash p(\overline{x})$. By Lemma \ref{Lem2SuperStab} there is $\overline{b}'\in\modN$ such that
        $d(b,b')<\epsilon$ and $\overline{b}'\forkindep_F\modM\cup A$, for a finite subset $F\subset \modM\cup A$. So by monotonicity and symmetry
        \begin{equation}\label{Fork3}
            A\forkindep_{\modM\cup A_{i_0}} \overline{b}',
        \end{equation}
  for some $i_0<\omega$. Similarly, since $\modM\forkindep A$ by monotonicity and symmetry we have that 
  \begin{equation}\label{Fork4}
      A\forkindep_{A_{i_0}}\modM,
  \end{equation}
  Thus, using equations (\ref{Fork3}), (\ref{Fork4}), transitivity  and symmetry we get that
  $$\overline{b}'\forkindep_{A_{i_0}} A.$$
  Note that $d(\overline{b},\overline{b}')<\epsilon$ and $\overline{b}'\forkindep_{A_{i_0}} A$,
  a contradiction since  $tp(\overline{b}/A)$ $\epsilon$-forks over $A_{i_0}$.
    \end{proof}
By the Claim \ref{ClaimSuperStab}, the structure $\modN$ cannot be saturated since $|A|<\|\modN\|$, so $\modN$ cannot be isomorphic to $\modM$. But, since $\modN$ is $f$-constructable over over $\modM\cup A$, we have $\modM\ \sbestrucel\ \modN$ and $\modN\ \sbestrucel\ \modM$ because $\modM$ is saturated. So $T$ does not have the SB-property.
\end{proof}

This result allows us to obtain several theories without the SB-property. Let us consider again some theories from Sections \ref{sec_PA} and \ref{sec_Rand}:

\begin{cor}
    The theory $APrA$ does not have the SB-property.
\end{cor}
\begin{proof}
   In \cite{BenYac-Berens-2008perturbations} it is shown that the theory $APrA$ is strictly stable, then by Theorem \ref{TeoSuperstableSB} the theory $APrA$ does not have the SB-property.
\end{proof}

\begin{cor}
    Let $T$ be a strictly stable first order theory and let $T^R$ be the associated randomized theory. Then $T^R$ does not have the SB-property.
\end{cor}
\begin{proof}
   By counting types (for example following the argument of the first part of the proof of Theorem 4.1 in \cite{ben2009randomizations}), it follows that if $T$ is strictly stable, then so is $T^R$. By Theorem \ref{TeoSuperstableSB} the theory $T^R$ does not have the SB-property.
\end{proof}


\addcontentsline{toc}{section}{Referencias}
\bibliographystyle{abbrv}
\bibliography{Bib}
\end{document}